\theoremstyle{plain}
\newtheorem{theorem}{Theorem}[section]
\newtheorem{lemma}[theorem]{Lemma}
\newtheorem{proposition}[theorem]{Proposition}
\newtheorem{corollary}[theorem]{Corollary}
\theoremstyle{definition}
\newtheorem{definition}[theorem]{Definition}
\newtheorem{question}[theorem]{Question}
\newtheorem{construction}[theorem]{Construction}
\theoremstyle{remark}
\newtheorem{remark}[theorem]{Remark}
\renewcommand{\epsilon}{\varepsilon}
\newcommand{\NN}{\ensuremath{\mathbb{N}}}
\newcommand{\s}[1]{\ensuremath{\mathcal{#1}}}
\newcommand{\sZ}{\s{Z}}
\newcommand{\sX}{\s{X}}
\newcommand{\sE}{\s{E}}
\newcommand{\sF}{\s{F}}
\newcommand{\sU}{\mathcal{U}}
\newcommand{\sV}{\mathcal{V}}
\newcommand{\sW}{\mathcal{W}}
\newcommand{\sY}{\mathcal{Y}}
\DeclareMathOperator{\as}{asdim}
\DeclareMathOperator{\diam}{diam}
\DeclareMathOperator{\supp}{supp}
\begin{document}
\title{Coarse property {C} and decomposition complexity}
\author{G.~ Bell}
\address{Department of Mathematics and Statistics, University of North Carolina at Greensboro, Greensboro, NC 27412, USA}
\email{gcbell@uncg.edu}
\author{D.~Moran}
\address{Mathematics Department, Guilford College, 5800 West Friendly Avenue, Greensboro, NC 27410, USA}
\email{morands@guilford.edu}

\author{A.~Nag\'orko}
\address{Faculty of Mathematics, Informatics, and Mechanics, University of Warsaw, Banacha 2, 02-097 Warszawa, Poland}
\email{amn@mimuw.edu.pl}
\date{\today}                                           

\begin{abstract}
The coarse category was established by Roe \cite{Ro03} to distill the salient features of the large-scale approach to metric spaces and groups that was started by Gromov \cite{Gr93}. In this paper, we use the language of coarse spaces to define coarse versions of asymptotic property C \cite{Dr00} and decomposition complexity \cite{GTY1}. We prove that coarse property C implies coarse property A; we also show that these coarse versions share many of the features of their metric analogs such as preservation by products or unions. 
\end{abstract}

\keywords{
Coarse geometry; asymptotic property {C}; finite decomposition complexity; asymptotic dimension.}
\subjclass[2010]{51K05 (primary);  54E15, 55M10; 20F69 (secondary)}


\maketitle

\dedicatory{This paper is dedicated to the memory of Alex Chigogidze, who introduced Bell and Nag\'orko to each other and who led the development of the Ph.D. program at the University of North Carolina at Greensboro.}

\section{Introduction}

The large-scale approach to metric spaces began with Gromov's seminal paper \cite{Gr93} concerning metric properties of finitely generated groups. Because finitely generated groups have a natural metric structure that is unique up to quasi-isometry, the class of finitely generated groups is a natural setting for such large-scale invariants. One of the properties he defines is asymptotic dimension. Asymptotic dimension and the large-scale approach to groups and metric spaces rose to prominence following two papers of Guoliang Yu, in which it was shown that the famous Novikov higher signature conjecture holds for finitely generated groups with finite asymptotic dimension \cite{Yu1} and the coarse Baum-Connes conjecture holds for spaces that admit a uniform embedding into Hilbert space \cite{Yu2}.

Asymptotic dimension has the advantage of being very easy to define and, since it is analogous to the classical covering dimension, its properties are readily studied. However, Yu's second result concerned spaces that admit a coarsely uniform embedding into Hilbert space (see \cite{BNSWW} for a nice introduction to many such properties). Although spaces with finite asymptotic dimension admit such an embedding, there are spaces with infinite asymptotic dimension that also admit such embeddings. Thus began an industry devoted to investigating large-scale invariants that are weaker than finite asymptotic dimension, but are strong enough to guarantee a uniform embedding into Hilbert space. Among these invariants are Dranishnikov's asymptotic property C \cite{Dr00}, Yu's property A \cite{Yu2}, finite decomposition complexity \cite{GTY1}, and straight finite decomposition complexity \cite{DZ}. The primary goal of the current work is to formulate versions of these invariants in the coarse category.

The coarse category was introduced by Roe \cite{Ro03} to abstractly quantify important features of metric spaces viewed on the large scale. The coarse category is an abstraction of the large-scale geometry of metric spaces in the same way that topology is an abstraction of our understanding of continuity and open balls in metric spaces. Thus, it is hoped that by defining the notions of property C and decomposition complexity in terms of the coarse structure only, we can better understand these properties.

The paper is organized as follows. In the next section we give precise definitions of the metric versions of asymptotic dimension, asymptotic property C, property A, and the various notions of decomposition complexity. In the third section we define our coarse property C based on the definition of coarse asymptotic dimension given in \cites{Ro03,Gra05}. The main result of this section is Theorem \ref{cpCcpA}, which says that coarse property C implies Roe's coarse property A \cite{Ro03}. This result is a natural extension of Dranishnikov's result that asymptotic property C implies property A for metric spaces \cite{Dr00}. Moreover we show that coarse property C is preserved by certain types of infinite unions and all finite unions (Theorem \ref{cPCUnion}).

In the fourth section we define various notions of coarse decomposition complexity. We relate these notions to each other and discuss how they are related to coarse asymptotic dimension and coarse property C. Much of this section follows the general outline from \cite{Guentner-Survey} regarding permanence properties in coarse geometry. The results of this section are permanence results of the following type: suppose $X$ is a coarse space built in some way from a coarse space $Y$ with a coarse property $\mathcal{P}$. Then, $X$ inherits that coarse property $\mathcal{P}$. For example, if $X=Y\times Y'$ where $Y$ and $Y'$ are coarse spaces with finite coarse decomposition complexity, then $X$ will have finite coarse decomposition complexity. The main results here are Theorem \ref{fibering}, and that these notions of decomposition complexity are preserved by finite unions, some infinite unions, and cartesian products.

\section{Preliminaries}

In this section we begin by describing various properties of metric spaces that are linked to the question of whether the space admits a coarsely uniform embedding into Hilbert space. 

In the second part of this section we describe Roe's coarse category, which is a natural generalization of the large-scale approach to metric spaces.

\subsection{Coarse Invariants of Metric Spaces}
Let $(X,d_X)$ and $(Y,d_Y)$ be metric spaces. Recall that a map $f:X\to Y$ is called \textit{proper} if the preimage of every compact set is compact. A metric is called \textit{proper} if the distance function is a proper map, i.e., if closed balls are compact.
A function $f:X\to Y$ is called {\em uniformly expansive} if there is a non-decreasing $\rho_2:[0,\infty)\to[0,\infty)$ such that 
\[d_Y(f(x),f(x'))\le\rho_2(d_X(x,x')).\]

The function $f:X\to Y$ is called {\em effectively proper} if there is some proper, non-decreasing $\rho_1:[0,\infty)\to[0,\infty)$ such that
\[\rho_1(d_X(x,x'))\le d_Y(f(x),f(x')).\]
The function $f:X\to Y$ is called a {\em coarsely uniform embedding} if there exist functions $\rho_1,\rho_2:[0,\infty)\to[0,\infty)$ such that, \[\rho_1(d_X(x,x'))\le d_Y(f(x),f(x'))\le \rho_2(d_X(x,x'))\] and $\rho_1\to\infty$. The spaces $X$ and $Y$ are said to be {\em coarsely equivalent} if there is a coarsely uniform embedding of $X$ to $Y$ and there is some $R>0$ so that for each $y\in Y$ there is some $x\in X$ so that $d_Y(f(x),y)\le R$. When the $\rho_i$ can be taken to be linear, $f$ is called a {\em quasi-isometric embedding} and the corresponding equivalence is {\em quasi-isometry}. 

Gromov defines the asymptotic dimension as a large-scale analog of the usual Lebesgue covering dimension \cite{Gr93}. 
Let $R>0$ be a (large) real number. A collection $\sU$ of subsets of the metric space $X$ is said to be \textit{uniformly bounded} if there is a uniform bound on the diameter of the sets in $\sU$; a collection $\sU$ is said to be \textit{$R$-disjoint} if, whenever $U\neq U'$ are sets in $\sU$, then $d(U,U')>R$, where $d(U,U')=\inf\{d(x,x')\mid x\in U, x'\in U'\}$. A family that is uniformly bounded and $R$-disjoint is said to be $0$-{\em dimensional on} $R$-{\em scale}. 
 We say that the {\em asymptotic dimension} of the metric space $X$ does not exceed $n$ and write $\as X\le n$ if for each (large) $R>0$, $X$ can be written as a union of $n+1$ sets with dimension $0$ at scale $R$. 
There are several other useful formulations of the definition (see \cite{BD-AD}) but we shall content ourselves with this one.

As mentioned in the introduction, this property rose to prominence following the work of G.~Yu \cite{Yu1} showing that the famous Novikov higher signature conjecture holds for finitely generated groups with finite asymptotic dimension. Following that, Yu generalized this result to bounded geometry metric spaces that admit a uniform embedding into Hilbert space \cite{Yu2}. In that paper, Yu defined property A for discrete metric spaces as a generalization of amenability of groups. A discrete metric space $X$ has {\em property A} if for any $r>0$ and any $\epsilon>0$, there is a collection of finite subsets $\{A_x\}_{x\in X}$, where $A_x\subset X\times\NN$, so that 
\begin{enumerate}
	\item $(x,1)\in A_x$ for each $x\in X$;
	\item for every pair $x$ and $y$ in $X$ with $d(x,y)<r$, $\frac{|A_x\Delta A_y|}{|A_x\cap A_y|}<\epsilon$; and 
	\item there is some $R$ so that for every $n$ such that $(y,n)\in A_x$, $d(x,y)\le R$.
\end{enumerate}
A space that has property A admits a uniform embedding into Hilbert space. Higson and Roe \cite{HR} showed that discrete metric spaces with bounded geometry and finite asymptotic dimension have property A.

Dranishnikov defined the notion of {\em asymptotic property C} for metric spaces in his work on asymptotic topology as an asymptotic analog of Haver's property C \cite{Dr00}.
A metric space $X$ has {\em asymptotic property C} if for any number sequence $R_1\le R_2\le R_3\le\cdots$ there is a finite sequence of uniformly bounded families of open sets $\{\mathcal{U}_i\}^k_{i=1}$ such that the union $\bigcup_{i=1}^k\mathcal{U}_i$ is a covering of $X$ and every family $\mathcal{U}_i$ is $R_i$-disjoint.

Asymptotic property C is easily seen to be a coarse invariant and any metric space with finite asymptotic dimension will have asymptotic property C. Dranishnikov showed that a discrete metric space with bounded geometry and asymptotic property C also has property A \cite[Theorem 7.11]{Dr00}. 

Guentner, Tessera, and Yu \cites{GTY1,GTY2} defined another coarse invariant of groups that is applicable when the asymptotic dimension is infinite: finite decomposition complexity. Following this, Dransihnikov and Zarichnyi defined a related notion: straight finite decomposition complexity. By way of notation, we write $A=\displaystyle\sqcup_{\hbox{\tiny$R$-disjoint}}A_i$ to mean that the subset $A$ can be decomposed as a union of sets $A_i$ in such a way that $d(A_i,A_j)>R$ whenever $i\neq j$.
Let $\sU$ and $\sV$ be families of subsets of a fixed metric space $X$. For a positive $R$, we say that $\sU$ is $R$-decomposable over $\sV$ and write $\sU\xrightarrow{R}\sV$ if for any $U\in\sU$ one can write 
\[U=U^0\cup U^1\, \hbox{ where } U^i=\bigsqcup_{\hbox{\footnotesize$R$-disjoint}} V^{i}_j,\hbox{ for i=0,1,}\]
where the sets $V^{i}_j\in\sV$.

We begin by describing the metric decomposition game for a metric space $X$. In this game two players take turns. First, Player 1 asserts a number $R_1$. Player 2 responds by finding a metric family $\sY^1$ and a $R_1$-decomposition of $\{X\}$ over $\sY^1$. Then, Player 1 selects a number $R_2$ and Player 2 again finds a family $\sY^2$ and an $R_2$-decomposition of $\sY^1$ over $\sY^2$. The game ends if Player 2 can respond to an assertion of Player 1 with a decomposition over a family that consists of uniformly bounded subsets at some finite step. The metric space $X$ is said to have {\em finite composition complexity} or FDC, if there is a winning strategy for Player 2 in the metric decomposition game for $X$, see \cite{GTY1}.

Let $\sX$ be a family of metric spaces. We say that the metric family $\sX$ has {\em straight finite decomposition complexity} sFDC if for every sequence $R_1\le R_2\le \cdots$ there exists an $n$ and metric families $\sY^i$ ($i=1,2,3,\ldots, n)$ so that with $\sX=\sY^0$, $\sY^{i-1}\xrightarrow{R_i}\sY^i$ for $i=1,2,3,\ldots, n$, and such that $\sY^n$ is uniformly bounded, see \cite{DZ}. The metric space $X$ will be said to have sFDC if the family $\{X\}$ does. It is clear that by restricting the families, this property can be seen to pass to subsets. Furthermore, metric spaces with finite asymptotic dimension have finite decomposition complexity \cite[Theorem 4.1]{GTY1} and spaces with asymptotic property C have straight finite decomposition complexity \cite{DZ}. Since finite asymptotic dimension implies asymptotic property C, it is also true that metric spaces with finite asymptotic dimension have straight finite decomposition complexity. Finally, since in the challenges for disjointness in the decomposition game for sFDC are revealed at the outset, any space that has FDC will have sFDC.

In \cite{Dydak15}, Dydak introduced the notion of countable asymptotic dimension as follows. A metric space $X$ is said to have {\em countable asymptotic dimension} if there is a sequence of positive integers $n_i$ ($i\ge 1$) such that for any sequence of positive real numbers $R_i$, there is a sequences $\sV_i$ of families of subsets of $X$ such that the following conditions hold:
\begin{enumerate}	
	\item $\sV_1=\{X\}$;
	\item each element $U\in \sV_i$ can be expressed as a union of at most $n_i$ families from $\sV_{i+1}$ that are $R_i$ disjoint for each $i$; and 
	\item at least one of the families $\sV_i$ is uniformly bounded. 
\end{enumerate}

It is shown by Dydak and Virk in \cite[Theorem 8.5]{DV} that a metric space $X$ has sFDC if and only if it has countable asymptotic dimension.

\subsection{The Coarse Category}

We describe the coarse category following Grave's dissertation \cite{Gra05}. Let $X$ be a set. A coarse structure on $X$ is a collection of subsets $\sE$ of $X\times X$ called {\it entourages} that satisfies the following axioms:
\begin{enumerate}
\item A subset of an entourage is an entourage;
\item a finite union of entourages is an entourage;
\item the diagonal $\Delta_X:=\{(x,x)\mid x\in X\}$ is an entourage;
\item the inverse $E^{-1}$ of an entourage is an entourage, where $E^{-1}:=\{(y,x)\mid (x,y)\in E\}$; and
\item the composition of two entourages is an entourage, where $E_1\circ E_2:=\{(x,z)\mid \exists y\in X, (x,y)\in E_1, (y,z)\in E_2\}$.
\end{enumerate}

The collection of elements of an entourage will sometimes be called the {\it controlled sets}. An entourage will be called {\it symmetric} if $E=E^{-1}$.

We call a subset $B\subset X$ {\em bounded} if $B\times B\in\sE$. For an entourage $E$, we say that the family $\sU$ of subsets of $X$ is $E$-{\it disjoint} if whenever $A, B\in \sU$ with $A\neq B$, then $A\times B\cap E=\emptyset$. By way of notation, denote by $\Delta_\sU$ the union $\Delta_\sU=\bigcup_{U\in\sU}\left(U\times U\right)$. We say that a family $\sU$ of subsets of a coarse space $(X,\sE)$ is {\it uniformly bounded} if $\Delta_\sU\in\sE$. By way of notation, let $E[A]:=\{x\in X \mid (x,a)\in E\hbox{ for some }a\in A\}$.  Grave shows \cite[Proposition 3.10]{Gra05} that a cover $\sU$ is uniformly bounded if and only if there is an entourage $D$ such that for every $U\in\sU$ there is an $x\in X$ such that $U\subset D[\{x\}]$.

Let $(X,\sE)$ and $(Y,\sF)$ be coarse spaces. The following definitions come from \cite{Ro03}. A map $f:X\to Y$ is said to be {\em proper} if the inverse image of every bounded subset of $Y$ is bounded in $X$. The map $f:X\to Y$ is said to be {\em bornologous} if for each entourage $E\in\sE$, we have $(f\times f)(E)\in\sF$. The map is said to be {\em coarse} if is is both proper and bornologous. A map $f:X\to Y$ is said to be a {\em coarsely uniform embedding} if it is coarse (i.e. proper and bornologous) and $(f^{-1}\times f^{-1})(F)\in\sE$ for every $F\in\sF$. Two maps $f,f':X\to Y$ are said to be close if $\{(f(x),f'(x))\mid x\in X\}$ is an entourage in $Y$. The spaces $X$ and $Y$ are said to be {\it coarsely equivalent} if there are coarse maps $f:X\to Y$ and $g:Y\to X$ such that the compositions (in both directions) are close to the identity.

\section{Coarse Property {C}}
Roe (and later Grave) defined a coarse analog of asymptotic dimension. In order to be clear, we will write $\as_C$ for the coarse asymptotic dimension (although it should always be clear from context).

\begin{definition}[\cite{Ro03, Gra06}]
A coarse space $(X,\sE)$ satisfies the inequality $\as_C X \leq n$ if for any entourage $E\in\sE$ there exists a finite sequence $\sU^0, \sU^1, \ldots, \sU^n$ of families of subsets of $X$ so that \begin{enumerate}
	\item $\sU=\bigcup_{i=0}^n \sU^i$ covers $X$;
	\item each $\sU^i$ is uniformly bounded; and
	\item each $\sU^i$ is $E$-disjoint.
\end{enumerate}
\end{definition}

If we translate the notions from the category of metric spaces to coarse spaces in the sense of Roe \cite{Ro03}, we obtain the following definition, which we call {\em coarse property C}. See also \cite{Gra06}.

\begin{definition}\label{cpC} A coarse space $(X,\sE)$ has \emph{coarse property C} if for any sequence $L_1\subset L_2\subset L_3\subset \cdots$ of entourages there is an $n$ and a finite sequence $\sU^1, \sU^2, \ldots, \sU^n$ of families of subsets of $X$ so that \begin{enumerate}
	\item $\sU=\bigcup_{i=1}^n \sU^i$ covers $X$;
	\item each $\sU^i$ is uniformly bounded; and
	\item each $\sU^i$ is $L_i$-disjoint.
\end{enumerate}
\end{definition}

It follows from the definitions that for a coarse space $(X,\sE)$, $\as_C X<\infty$ implies $X$ has coarse property C. It is also easy to see that coarse property C is inherited by subsets by restriction.

\subsection{Basic Properties}


%

\begin{proposition} Coarse property C is a coarse invariant.
\end{proposition}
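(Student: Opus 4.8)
The plan is to show that coarse property C is preserved under coarse equivalence, which suffices since ``coarse invariant'' here means invariant under the isomorphisms of the coarse category. So suppose $(X,\sE)$ and $(Y,\sF)$ are coarsely equivalent, with coarse maps $f\colon X\to Y$ and $g\colon Y\to X$ whose compositions are close to the respective identities, and suppose $X$ has coarse property C; I will show $Y$ has coarse property C. Let $F_1\subset F_2\subset\cdots$ be an increasing sequence of entourages of $Y$; the goal is to produce the required finite sequence of uniformly bounded, $F_i$-disjoint families covering $Y$.

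The key step is to pull the challenge entourages back through $f$. Since $f$ is a coarsely uniform embedding in the sense that $(g\times g)(F)\in\sE$ for each $F\in\sF$ (using $g$ as the coarse inverse), set $L_i := (g\times g)(F_i)$, enlarged if necessary so that $L_1\subset L_2\subset\cdots$ remains an increasing sequence of entourages of $X$ — one can just replace $L_i$ by $L_1\cup\cdots\cup L_i\cup\Delta_X$, which is still an entourage by the coarse structure axioms. Apply coarse property C of $X$ to this sequence to get families $\sU^1,\ldots,\sU^n$ of subsets of $X$ covering $X$, each $\sU^i$ uniformly bounded and $L_i$-disjoint. Now push forward: define $\sV^i := \{ f(U) : U\in\sU^i \}$. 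Since $f$ is bornologous, the image of a uniformly bounded family is uniformly bounded (if $\Delta_{\sU^i}\in\sE$ then $(f\times f)(\Delta_{\sU^i})\supseteq \Delta_{\sV^i}$ up to the subset axiom, hence $\Delta_{\sV^i}\in\sF$). The families $\sV^i$ cover $f(X)$, but to cover all of $Y$ I use that $f$ is a coarse equivalence: there is an entourage $F\in\sF$ with $Y = F[f(X)]$, equivalently every point of $Y$ lies within $F$ of some $f(x)$. One handles this either by thickening each $\sV^i$ by $F$ (taking $\{F[f(U)]\}$) and checking disjointness survives if we had used a slightly larger challenge, or — cleaner — by noting that coarse property C of $Y$ is equivalent to coarse property C of $f(X)\subset Y$ together with the fact that $f(X)$ is ``coarsely dense'' in $Y$, and property C is inherited by and ascends along coarse equivalences of subsets; I will spell out the thickening version.

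For the disjointness, I must verify that each $\sV^i$ (or its $F$-thickening) is $F_i$-disjoint. Suppose $f(U)\times f(U') \cap F_i \neq\emptyset$ for $U\neq U'$ in $\sU^i$; then there are $u\in U$, $u'\in U'$ with $(f(u),f(u'))\in F_i$, hence $(g f(u), g f(u'))\in (g\times g)(F_i) = L_i$. Because $gf$ is close to $\mathrm{id}_X$, the set $D = \{(x, gf(x)) : x\in X\}$ is an entourage, so $(u, u') \in D\circ L_i\circ D^{-1}$, which is an entourage of $X$; absorbing this bounded-scale distortion into the original challenge sequence (i.e.\ running property C on $L_i' := D\circ L_i \circ D^{-1}$ enlarged to be increasing) forces $u$ and $u'$ to lie in the same set of $\sU^i$, a contradiction. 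The main obstacle — and the only place real care is needed — is precisely this bookkeeping: one must choose the pulled-back challenge sequence for $X$ large enough to absorb both $(g\times g)(\cdot)$ and the closeness entourage $D$ on each side, and simultaneously keep the sequence increasing; this is routine given the coarse structure axioms (closure under finite unions, composition, inverses, and subsets) but is the step where an incautious argument would go wrong. Handling the coarse density of $f(X)$ in $Y$ to get an honest cover of $Y$ (rather than just of $f(X)$) is the secondary point, dealt with by an $F$-thickening together with one more enlargement of the challenge. Everything else is formal.
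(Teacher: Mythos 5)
Your argument is correct in substance, but it goes in the opposite direction from the paper and is structured quite differently. The paper proves the transfer ``$Y$ has coarse property C $\Rightarrow$ $X$ has coarse property C'' along a coarsely uniform embedding $f\colon X\to Y$: it restricts to the image $f(X)$ (subsets inherit property C), pushes the challenge entourages forward as $K_i=(f\times f)(L_i)$, and pulls the resulting cover back by \emph{preimages} $f^{-1}(U)$; uniform boundedness of the pulled-back families is exactly where the embedding condition $(f^{-1}\times f^{-1})(\Delta_{\sU})\in\sE$ is used, and no thickening or closeness entourages ever appear. You instead prove ``$X$ has coarse property C $\Rightarrow$ $Y$ has coarse property C'' directly by pushing the cover \emph{forward} along $f$, which forces the two extra corrections you identify: thickening by the coarse-density entourage $F$ (with $Y=F[f(X)]$) to get an honest cover of $Y$, and enlarging the challenge on $X$ by the closeness entourage $D=\{(x,gf(x))\}$ and by $F$ on both sides. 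Your disjointness computation $(u,u')\in D\circ L_i\circ D^{-1}$ is right, and the deferred bookkeeping does close up: the clean way to write it is to run property C on $X$ against the increasing sequence obtained from $L_i':=D\circ (g\times g)\bigl(F^{-1}\circ F_i\circ F\bigr)\circ D^{-1}$ (take unions with earlier terms and $\Delta_X$ to make it nested), after which both the $F$-thickened cover's $F_i$-disjointness and its uniform boundedness ($\Delta\subset F\circ(f\times f)(\Delta_{\sU^i})\circ F^{-1}$) follow; you should state that explicit challenge rather than leave it as ``routine.'' The trade-off: your route uses only bornologousness of $f$ and $g$ plus closeness of the compositions, at the price of the thickening/closeness bookkeeping, while the paper's route is shorter and in fact yields the stronger monotonicity statement that coarse property C pulls back along any coarsely uniform embedding, not just along coarse equivalences. (One terminological slip to fix: the condition $(g\times g)(F)\in\sE$ you invoke is just bornologousness of $g$, not the coarsely-uniform-embedding property of $f$.)
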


\begin{proof}
Let $(X,\sE)$ and $(Y,\sF)$ be coarsely equivalent coarse spaces. Then, there are coarsely uniform embeddings $f:X\to Y$ and $g:Y\to X$ whose compositions are close to the identities. We show that if $f:X\to Y$ is a coarsely uniform embedding and $(Y,\sF)$ has coarse property C, then $(X,\sE)$ will have coarse property C. The subset $f(X)\subset Y$ inherits coarse property C, so to ease notation, we will replace $Y$ with the image $f(X)$ and let $\sF$ denote the restriction to this image.  Let $L_1\subset L_2\subset L_3\subset \cdots$ be a sequence of entourages in $\sE$.  Then with $K_i=(f\times f)(L_i)$, we find $K_1\subset K_2\subset K_3\subset \cdots$ is a sequence of entourages in $\sF$.

Therefore, since $(Y,\sF)$ has coarse property C, there is a finite sequence $\sU^1, \sU^2, \ldots, \sU^n$ of families of subsets of $Y$ satisfying the conditions of coarse property C.  Let $\sV^i = \{f^{-1}(U) \mid U\in\sU^i\}$.  Since $\sU=\bigcup_{i=1}^n \sU^i$ covers $Y$, we have that $\sV=\bigcup_{i=1}^n \sV^i$ covers $X$.  

Now, denoting $\bigcup_{V\in\sV} V\times V$ by $\Delta_\sV$, we have that $\Delta_\sV = \bigcup_{U\in\sU} f^{-1}(U)\times f^{-1}(U) = (f\times f)^{-1}(\bigcup_{U\in\sU} U\times U) = (f\times f)^{-1}(\Delta_\sU)$.  By assumption $\sU$ is uniformly bounded and so $\Delta_\sU$ is an entourage.  Since $f$ is a coarsely uniform embedding, we have that  $(f\times f)^{-1}(\Delta_\sU) = \Delta_\sV$ is an entourage and therefore $\sV$ is uniformly bounded.

It remains to show that $\sV^i$ is $L_i$-disjoint.  Let $A,B \in \sV^i$, with $A\neq B$.  Then $A = f^{-1}(A')$ for some $A'\in \sU^i$ and $B = f^{-1}(B')$ for some $B'\in \sU^i$, with $A' \neq B'$.  Then, $\left(A\times B\right) \cap L_i\subset (f\times f)^{-1}\left(\left(A'\times B'\right) \cap K_i\right) = (f\times f)^{-1}(\emptyset) = \emptyset$ since $\sU^i$ is $K_i$-disjoint. We conclude that $\sV^i$ is $L_i$-disjoint and therefore $X$ has coarse property C.
\end{proof}

Instead of assuming a coarsely uniform embedding between $X$ and $Y$, one may consider the extent to which property C is preserved by other types of coarse maps. In the metric case this is thoroughly investigated in \cite[Theorem 6.2 and Theorem 6.3]{DV}. In the case of coarse asymptotic dimension, such investigation was taken up by Austin in \cite{Austin}.

A metric space $X$ can be endowed with a natural coarse structure: the so-called bounded coarse structure that is derived from the metric on $X$. This structure is the one for which the entourages $\sE$ consist of those $E\subset X$ for which $\sup\{d(x,x')\mid (x, x')\in E\}$ is finite. We show that a metric space has asymptotic property C if and only if the corresponding coarse space has coarse property C. Thus, ours is in some sense the ``correct'' way to define coarse property C.

\begin{proposition} 
Let $(X,d)$ be a metric space. Let $\mathcal{E}$ denote the bounded coarse structure on $X$. Then $(X, d)$ has asymptotic property C if and only if $(X, \mathcal{E})$ has coarse property C.
\end{proposition}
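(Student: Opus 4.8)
The plan is to prove both implications by translating between the two languages: metric $R$-disjointness and uniform boundedness on one side, entourage $E$-disjointness and uniform boundedness on the other. The key dictionary fact is that in the bounded coarse structure, an entourage $E$ is contained in $E_R := \{(x,x') \mid d(x,x') \le R\}$ for some $R < \infty$, and conversely every $E_R$ is an entourage; moreover a family $\sU$ is $E_R$-disjoint precisely when $d(U,U') > R$ for distinct $U, U' \in \sU$, and $\sU$ is uniformly bounded in the coarse sense (i.e. $\Delta_\sU \in \sE$) precisely when it is uniformly bounded in the metric sense (finite diameter bound). The only subtlety in this dictionary is the open-versus-arbitrary-sets discrepancy: the metric definition of asymptotic property C demands families of \emph{open} sets, while Definition~\ref{cpC} places no such restriction. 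I would dispense with this at the outset by recording the standard observation that in the metric definition one may replace ``open'' by ``arbitrary'' without loss: given arbitrary $R_i$-disjoint families, slightly thickening each set (by an open $\epsilon$-neighborhood with $\epsilon$ small relative to the gaps) produces open families that are still, say, $(R_i - 1)$-disjoint and still uniformly bounded, so asymptotic property C with arbitrary sets is equivalent to the usual formulation.

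For the forward direction, suppose $(X,d)$ has asymptotic property C and let $L_1 \subset L_2 \subset \cdots$ be an increasing sequence of entourages in $\sE$. For each $i$ choose $R_i < \infty$ with $L_i \subset E_{R_i}$, and by replacing $R_i$ with $\max(R_1, \ldots, R_i)$ arrange that $R_1 \le R_2 \le \cdots$. Apply asymptotic property C to the sequence $(R_i)$ to obtain uniformly bounded families $\sU^1, \ldots, \sU^n$ whose union covers $X$ with $\sU^i$ being $R_i$-disjoint. Then each $\sU^i$ is uniformly bounded in the coarse sense, and since $L_i \subset E_{R_i}$ and $\sU^i$ is $R_i$-disjoint, it is $L_i$-disjoint. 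Hence $(X,\sE)$ has coarse property C.

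For the reverse direction, suppose $(X,\sE)$ has coarse property C and let $R_1 \le R_2 \le \cdots$ be a sequence of positive reals. Set $L_i := E_{R_i}$; since $R_i$ is finite each $L_i$ is an entourage, and since the sequence is nondecreasing we have $L_1 \subset L_2 \subset \cdots$. Coarse property C yields families $\sU^1, \ldots, \sU^n$ covering $X$, each uniformly bounded (so of uniformly bounded diameter, by the dictionary, possibly after noting $\Delta_{\sU^i} \subset E_{S}$ for some finite $S$) and each $L_i$-disjoint, which is exactly $R_i$-disjointness. Passing to open thickenings as in the first paragraph gives families of open sets with the required properties, so $(X,d)$ has asymptotic property C. The main obstacle, such as it is, is purely bookkeeping: handling the open-sets clause cleanly and making sure the monotonicity normalizations $R_1 \le R_2 \le \cdots$ and $L_1 \subset L_2 \subset \cdots$ are compatible with the two definitions; there is no genuine geometric content beyond the entourage-versus-$R$ dictionary.
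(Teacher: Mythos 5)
Your proof is correct and follows essentially the same route as the paper's: translate between $R$-disjointness and $E_R$-disjointness via $E_R=\{(x,x')\mid d(x,x')\le R\}$ (equivalently $R_i=\sup\{d(x,x')\mid (x,x')\in L_i\}$) and between metric and coarse uniform boundedness, in both directions. Your extra thickening step to restore the ``open sets'' clause of the metric definition is a small point of added care that the paper's proof silently skips, but it does not change the substance of the argument.
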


\begin{proof} 
Suppose first that $(X,d)$ has asymptotic property C. Let $L_1\subset L_2\subset\cdots$ be a sequence of controlled sets. For each $i$, put $R_i=\sup\{d(x,x')\mid (x,x')\in L_i\}$. Then each $R_i$ is finite, by the definition of the bounded coarse structure and moreover $R_1\le R_2\le \cdots$.

Since $(X,d)$ has asymptotic property C, there are families $\mathcal{U}_1, \mathcal{U}_2, \cdots, \mathcal{U}_k$ that cover $X$, that consist of uniformly bounded sets, and that are $R_i$-disjoint ($i=1,2,\dots,k$). 

We need to show that the $\mathcal{U}_i$ are coarsely uniformly bounded and $L_i$-disjoint.

The collection $\mathcal{U}_i$ is coarsely uniformly bounded if and only if $\Delta_{\mathcal{U}_i}=\bigcup_{\alpha}U^i_\alpha\times U^i_\alpha$ is in $\mathcal{E}$. But, $\Delta_{\mathcal{U}_i}\in\mathcal{E}$ if and only if 
\[\sup\{d(x,y)\mid (x,y)\in \Delta_{\mathcal{U}_i}\}<\infty,\] which is implied by our assumption that the family has uniformly bounded diameter,  \[\sup_{\alpha}\{\operatorname{diam}(U^i_\alpha)\}<\infty.\] 

Next, to show that the $\mathcal{U}_i$ are $L_i$-disjoint, we must show that $\left(U^i_\alpha\times U^i_\beta\right)\cap L_i=\emptyset$ whenever $U^i_\alpha\neq U^i_\beta$. Suppose that $a\in U^i_\alpha$ and $b\in U^i_\beta$ and $(a,b)\in L_i$. Then, we have $d(a,b)\le R_i$, which contradicts the fact that the family $\mathcal{U}_i$ is $R_i$-disjoint. Thus, asymptotic property C implies coarse property C. 

On the other hand, suppose now that $(X,\mathcal{E})$ has coarse property C and let $R_1\le R_2\le\cdots$ be given. Define a sequence of controlled sets $L_i$ as follows:
\[L_i=\{(x,y)\in X\times X\mid d(x,y)\le R_i\}.\] Using this sequence, we find a cover of $X$ by uniformly bounded $\mathcal{U}_1,\mathcal{U}_2,\ldots,\mathcal{U}_k$, where each $\mathcal{U}_i$ is $L_i$-disjoint. 

If $\sU_i$ is $L_i$-disjoint and if $U^i_\alpha\neq U^i_\beta$, then $(U^i_\alpha\times U^i_\beta)\cap L_i=\emptyset.$ Thus, if $(a,b)\in U^i_\alpha\times U^i_\beta$, then $(a,b)\notin L_i,$ i.e., $d(a,b)>R_i$, which is to say the family $\sU_i$ is $R_i$-disjoint.

On the other hand, if each family $\sU_i$ is coarsely uniformly bounded, then $\Delta_{\sU_i}=\bigcup\left(U^i_\alpha\times U^i_\alpha \right)\in\sE$. Thus, $\sup_{\alpha}\{\diam(U^i_\alpha)\}<\infty$. Thus, $\sU_i$ is metrically uniformly bounded. Thus, coarse property C implies asymptotic property C.
\end{proof}


\subsection{Coarse property C implies coarse property A}

In the metric setting, property A can be formulated in several different ways \cite{Willett}. Roe defined property A in terms of coarse structure by translating a notion of property A that is equivalent to Yu's original definition for discrete metric spaces with bounded geometry \cite{Ro03}*{Definition 11.35} (see also \cite{Sato}). His definition involves the existence of maps from the space $X$ to $\ell^2(X)$, whereas we use $\ell^1$ for simplicity (compare \cite{Dr06}*{Proposition 3.2}). We prove that coarse property C implies coarse property A.
Indeed, in the special case of metric spaces, we recover the result of Dranishnikov \cite{Dr00}: that discrete metric spaces with bounded geometry and asymptotic property C have asymptotic property A. Indeed one point of this line of reasoning was to determine to what extent Dranishnikov's arguments could be applied in the coarse setting.

\begin{definition} A coarse space $(X,\sE)$ will be said to have {\em coarse property A} if for each $\varepsilon > 0$
and for each $E \in \sE$ there exists a map $a:X\to \ell^1(X)$, expressed as $a:x\mapsto a_x$ such that:
\begin{enumerate}
	\item $\|a_x\|_1=1$ for all $x\in X$;
	\item if $(x, y) \in E$, then $|| a_x - a_y ||_1 < \varepsilon$; 
	\item there exists $S\in\sE$ such that for each $x \in X$, $\supp a_x \subset S[x]$.
\end{enumerate}
\end{definition}

To begin, we have a sequence of propositions that enable us to use entourages to give some measure of distance in a coarse space. By way of notation, for an entourage $E$ in a coarse space $(X,\sE)$, write $E^0$ to mean the entourage $\Delta$. For a positive integer $n$ and an entourage $E$, we write $E^n$ to mean the $n$-fold composition $E\circ E\circ \cdots \circ E$. In particular, superscripts on entourages will not denote indices. 

\begin{proposition}\label{Distance}
Let $(X,\mathcal{E})$ be a coarse space and fix a symmetric entourage $E$. Define $D: X\times X \to \mathbb{Z} \cup \{+\infty\}$ by $D(x,y) = \min\{k\geq 0 \mid(x,y)\in  E^k\}$. Then we have that 
\begin{enumerate}
	\item $D$ is symmetric, 
	\item $D(x,y) = 0$ iff $x = y$,
	\item $D(x,y) \leq D(x,z) + D(z,y)$,
    \item for every $w, z \in X$ and each $A \subset X$ we have      \[ |D(w,A) - D(z, A)| \leq D(w,z),\]
    where $D(x, A) = \inf \{ d(x,a) \colon a \in A \}$.
\end{enumerate}
\end{proposition}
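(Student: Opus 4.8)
The plan is to verify each of the four properties directly from the definition $D(x,y) = \min\{k\geq 0 \mid (x,y)\in E^k\}$, using the coarse structure axioms and the fact that $E$ is symmetric. Throughout I would adopt the convention that $\min\emptyset = +\infty$, so $D$ is well-defined as a map to $\mathbb{Z}_{\geq 0}\cup\{+\infty\}$, and I would record the basic fact that $E^0 = \Delta_X$ and $(x,y)\in E^k$ for some $k$ implies $(x,y)\in E^m$ for all $m\geq k$ — this last point follows because $\Delta_X\subset E$ (as $E$ is symmetric and contains the diagonal by the standing assumption; if not, one replaces $E$ by $E\cup\Delta_X$, but here I will assume $\Delta_X\subset E$, which is harmless and in fact forced if we want $D$ finite-valued near the diagonal) so $E^k\subset E^{k+1}$. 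This monotonicity is the workhorse for the triangle inequality.

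For (1), symmetry of $D$ follows because $E$ symmetric gives $(E^k)^{-1} = E^k$ for every $k$, so $(x,y)\in E^k \iff (y,x)\in E^k$, hence the minimizing $k$ is the same. For (2), $D(x,y)=0$ means $(x,y)\in E^0 = \Delta_X$, which is exactly $x=y$. For (3), the triangle inequality: if either $D(x,z)$ or $D(z,y)$ is $+\infty$ there is nothing to prove, so suppose $D(x,z)=j$ and $D(z,y)=k$ are finite. Then $(x,z)\in E^j$ and $(z,y)\in E^k$, so by the definition of composition $(x,y)\in E^j\circ E^k = E^{j+k}$, whence $D(x,y)\leq j+k$. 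For (4), I would run the standard "distance to a set" argument: fix $w,z$ and $A$. If $D(w,A)=+\infty$ one checks that forces $D(z,A)=+\infty$ as well (otherwise some finite path from $z$ to $A$ combined with a finite $D(w,z)$ — wait, $D(w,z)$ need not be finite — so one argues symmetrically that if $D(z,A)$ is finite and $D(w,z)$ is finite then $D(w,A)$ is finite; and if $D(w,z)=+\infty$ then the right-hand side is $+\infty$ and the inequality is vacuous). Assuming the relevant quantities are finite, for each $a\in A$ we have $D(w,a)\leq D(w,z)+D(z,a)$ by (3); taking the infimum over $a\in A$ on the right gives $D(w,A)\leq D(w,z)+D(z,A)$, and by symmetry in $w,z$ also $D(z,A)\leq D(w,z)+D(w,A)$, which together yield $|D(w,A)-D(z,A)|\leq D(w,z)$.

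I do not anticipate a genuine obstacle here — this is the familiar fact that a symmetric generating entourage induces an (extended-integer-valued) metric, and the proof is the same bookkeeping as for word metrics on groups. The one place requiring a moment's care is the handling of $+\infty$ values in parts (3) and (4): the coarse structure need not be connected, so $D$ can genuinely take the value $+\infty$, and each inequality must be read in the extended sense with the conventions $n + \infty = \infty$ and $\inf\emptyset = +\infty$. I would state these conventions explicitly at the start of the proof so that the four verifications become one-line arguments each. A secondary subtlety worth flagging is whether $\Delta_X\subset E$ may be assumed: the proposition says "fix a symmetric entourage $E$", and strictly $E$ need not contain the diagonal; but one always has $\Delta_X\in\sE$ and may replace $E$ by $E\cup\Delta_X$, still symmetric, without changing the coarse structure's bounded sets — alternatively, one simply notes that $E^0=\Delta_X$ by our convention for the zeroth power regardless of whether $\Delta_X\subset E$, and monotonicity $E^k\subset E^{k+1}$ for $k\geq 1$ is all that (3) actually needs. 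I would go with the clean convention $E^0 := \Delta_X$ and not worry about it further.
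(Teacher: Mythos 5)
Your proof is correct and follows essentially the same route as the paper: parts (1)--(3) are verified directly from the symmetry of $E^k$, the convention $E^0=\Delta_X$, and the composition identity $E^j\circ E^k=E^{j+k}$, while part (4) is the standard extended-valued-metric argument (infimum over $a\in A$ of the triangle inequality), which the paper simply cites as well known and you write out explicitly. Your added bookkeeping about $+\infty$ values and the diagonal convention is harmless and does not change the argument.
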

\begin{proof}We note that from the definition, if $D(x,y) = k$, then $(x,y)\in E^{k}$.
Since $E$ is symmetric, it follows that $E^{k}$ is symmetric. So if $(x,y) \in E^{k}$ then $(y,x)\in E^{k}$ and thus $D(x,y) = D(y,x)$. 

The second statement is obvious. 

Suppose now that $D(x,z)$ and $D(z,y)$ are finite, say $D(x,z) = k$ and $D(z,y)=l$. Thus, $(x,z)\in E^{k}$ and $(z,y)\in E^{l}$. We see then, that $(x,y) = (x,z)\circ (z,y) \in E^{k}\circ E^{l} = E^{k+l}$ (we use associativity of $\circ$ here) and thus $D(x,y) \leq k+l=D(x,z)+D(z,y)$.

If $D(x,y)=+\infty$, then there is no $k$ so that $(x,y)\in E^k$, so at least one of $D(x,z)$ and $D(z,y)$ must also be infinite. Thus, the inequality holds in either case. 

In 1.--3. we proved that $D$ is a metric (which can take infinite values) and 4. is well known to hold for metric functions.
\end{proof}

\begin{proposition}\label{phi}
  Let $(X, \mathcal{E})$ be a coarse space and let $E \in \mathcal{E}$ be a symmetric entourage with $\Delta \subset E$.
  Let $n \in \mathbb{N}, n > 0$.
  Let $\sU = \{ U_j \}$ be an $E^{n}$-disjoint uniformly bounded family of subsets of $X$.
  Define $\varphi_j = \varphi_{j,n}^\sU \colon X \to \mathbb{R}$ by the formula
  \[
    \varphi_j(x) = \max \{ 0, \frac n4 - D(x, U_j) \}
  \]
  and $\varPhi = \varPhi_n^\sU \colon X \to \mathbb{R}$ by the formula
  \[
    \varPhi(x) = \sum_j \varphi_j(x).
  \]
  Then
  \begin{enumerate}
  \item for each $x \in X$ there exists at most one $j$ such that $\varphi_j(x) \neq 0$,
  \item The map $\varPhi$ is $1$-Lipschitz with respect to $D$, i.e. for every $w, z \in X$ we have
  \[
    |\varPhi(w) - \varPhi(z) | \leq D(w,z).
  \]
  \end{enumerate}
\end{proposition}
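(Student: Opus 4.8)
The plan is to verify the two claims directly from the definition of $\varphi_j$ and the metric-like properties of $D$ established in Proposition~\ref{Distance}.

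For part~(1), I would argue by contraposition: suppose $\varphi_j(x) \neq 0$ and $\varphi_k(x) \neq 0$ for $j \neq k$. Then $D(x, U_j) < \tfrac n4$ and $D(x, U_k) < \tfrac n4$, so by definition of $D(x, \cdot)$ as an infimum there exist $u \in U_j$ and $v \in U_k$ with $D(x,u) < \tfrac n4$ and $D(x,v) < \tfrac n4$. The triangle inequality (Proposition~\ref{Distance}(3)) gives $D(u,v) < \tfrac n2 \leq n$, so $(u,v) \in E^{m}$ for some $m \leq n$, i.e. $(u,v) \in E^{n}$ since $\Delta \subset E$ implies $E^m \subset E^n$ for $m \le n$. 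But then $(u,v) \in (U_j \times U_k) \cap E^{n}$, contradicting $E^{n}$-disjointness of $\sU$. Hence at most one $\varphi_j$ is nonzero at $x$.

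For part~(2), fix $w, z \in X$. If $\varPhi(w) = \varPhi(z) = 0$ the inequality is trivial, so by symmetry assume $\varPhi(w) \neq 0$. By part~(1), $\varPhi(w) = \varphi_j(w)$ for the unique index $j$ with $\varphi_j(w) \neq 0$, and $\varPhi(z) \geq \varphi_j(z)$. Using Proposition~\ref{Distance}(4), $|D(w, U_j) - D(z, U_j)| \leq D(w,z)$, and since $t \mapsto \max\{0, \tfrac n4 - t\}$ is $1$-Lipschitz, $|\varphi_j(w) - \varphi_j(z)| \leq D(w,z)$. Therefore $\varPhi(w) - \varPhi(z) \leq \varphi_j(w) - \varphi_j(z) \leq D(w,z)$. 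For the reverse direction, if also $\varPhi(z) \neq 0$ we apply the same argument with the roles of $w$ and $z$ swapped using the unique index for $z$; and if $\varPhi(z) = 0$ then $\varPhi(z) - \varPhi(w) \leq 0 \leq D(w,z)$ trivially. Combining gives $|\varPhi(w) - \varPhi(z)| \leq D(w,z)$.

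I do not expect a serious obstacle here; the only point requiring a little care is the bookkeeping in part~(2) when exactly one of $\varPhi(w), \varPhi(z)$ vanishes, and the observation in part~(1) that $\Delta \subset E$ forces the nested inclusions $E^m \subset E^n$ for $m \le n$ (so that $D(u,v) < n$ genuinely places $(u,v)$ in $E^{n}$). Both are routine once stated explicitly.
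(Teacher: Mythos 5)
Your proof is correct. Part (1) is essentially the paper's argument: assume two indices give nonzero values, pick witnesses within $\tfrac n4$ of $x$, and use the triangle inequality from Proposition~\ref{Distance} together with $\Delta\subset E$ (so $E^m\subset E^n$ for $m\le n$) to contradict $E^n$-disjointness; your explicit remark about the nested inclusions is exactly the point the paper leaves implicit. In part (2) you take a genuinely different, and cleaner, route. The paper argues by cases on whether $\varPhi(w)$ and $\varPhi(z)$ vanish and on whether the two distinguished indices $j,j'$ coincide; in the case $j\neq j'$ it invokes the $E^n$-disjointness of $\sU$ a second time to justify the bound $\tfrac n2\le D(w,z)$. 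You avoid that case entirely: since all $\varphi_k\ge 0$, part (1) gives $\varPhi(w)=\varphi_j(w)$ while $\varPhi(z)\ge\varphi_j(z)$, so the one-sided estimate $\varPhi(w)-\varPhi(z)\le\varphi_j(w)-\varphi_j(z)\le D(w,z)$ follows from the $1$-Lipschitzness of $t\mapsto\max\{0,\tfrac n4-t\}$ composed with $D(\cdot,U_j)$ (Proposition~\ref{Distance}(4)), and symmetrizing (with the trivial observation when $\varPhi(z)=0$) finishes the proof. This buys a shorter argument that uses disjointness only through part (1), at the cost of nothing; the paper's case analysis, by contrast, records the quantitative fact that points close to distinct members of $\sU$ are at $D$-distance at least $\tfrac n2$, which is not needed elsewhere. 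Both arguments treat possible infinite values of $D$ with the same (acceptable) level of informality.
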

\begin{proof}
1. Suppose to the contrary that there there are $j\neq j'$ with both $\phi_j(x)\neq 0$ and $\phi_{j'}(x)\neq 0$. Put $d=D(x,U_j)$ and $d'=D(x,U_{j'}).$ Then $d < \frac{n}{4}$ and $d' < \frac{n}{4}$. Therefore, there exist $z_j\in U_j$ and $z_{j'}\in U_{j'}$ so that $(x,z_j)\in E^{d}$ and $(x,z_{j'})\in E^{d'}$. Together these imply that $(z_j,z_{j'})\in E^{d+d'}$ and $d+d' < \frac n2 \leq n$. Thus, $(z_j,z_{j'})\in E^{n}$, which contradicts the fact that the family $\sU$ is $E^{n}$-disjoint.

2. First consider the case $\varPhi(w) \neq 0$ and $\varPhi(z) \neq 0$. By 1., there exists a pair $j$ and $j'$ such that $\varPhi(w) = \varphi_j(w)$ and $\varPhi(z) = \varphi_{j'}(z)$.
Then, by the definition, we have
\[
  |\varphi_j(w) - \varphi_{j'}(z)| = |D(z, U_{j'}) - D(w, U_j)|. \]
If $j = j'$, then $|D(z, U_{j'}) - D(w, U_j)| \leq D(w,z)$ by Lemma~\ref{Distance}.
If $j \neq j'$, then $|D(z, U_{j'}) - D(w, U_j)| \leq \frac n4 + \frac n4 = \frac n2 \leq D(w,z)$.

Now consider the case $\varPhi(w) \neq 0$, $\varPhi(z) = 0$.
By 1., there exists $j$ such that $\varPhi(w) = \varphi_j(w)$.
Then, by the definition, we have
\[
  |\varPhi(w) - \varPhi(z)| = | \frac n4 - D(w, U_j) - 0 |
  \leq |D(z,U_j) - D(w, U_j)| \leq D(w,z).
\]
\end{proof}

Next, we will construct a function into $\ell^1(X)$ related to coarse property A.

\begin{construction}\label{constr}
  We construct a map $b^n_x \in \ell^1(X)$.
  Let $(X, \mathcal{E})$ be a coarse space, let $E \in \mathcal{E}$, $\Delta \subset E$, $E = E^{-1}$. 
  Let $n \in \mathbb{N}$, $n > 1$.
  Let $\sU^1, \sU^2, \ldots, \sU^k$ be a sequence families of uniformly bounded subsets of $X$.
  Denote $\sU^i = \{ U^i_j \}_j$ and assume that $\sU^i$ is $E^{n^i}$-disjoint.
  Assume that $\sU = \bigcup_i \sU^i$ is a cover of $X$.
  For each $i,j$ pick $x^i_j \in U^i_j$.
  Let $D^i_x = \{ x^i_j \colon D(x, U^i_j) < \frac{n^i}4 \}$.
  We define $b^n_x \in \ell^1(X)$ by the formula
  \[
    b^n_x(y) = \sum_{i=1}^k n^{k-i+1} \varPhi^{\sU^i}_{n^i}(x) \cdot \chi_{D^i_x}(y),
  \]
  where $\chi_{D^i_x}$ denotes the characteristic function of the set $D^i_x$.
\end{construction}

\begin{lemma}\label{bnx}
  If $b^n_x$ is defined as in Construction~\ref{constr}, then
  \begin{enumerate}
    \item $\frac {n^{k+1}}4 \leq || b^n_x ||_1 < \infty$,
    \item there exists $S \in \mathcal{E}$ such that for each $x \in X$ we have $\supp b^n_x \subset S[x]$,
    \item $||b^n_z - b^n_w ||_1 \leq \frac{n(n^k-1)}{n-1} D(z,w).$
  \end{enumerate}
\end{lemma}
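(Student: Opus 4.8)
The plan is to verify the three items in order, reducing each to the tools already set up in Propositions~\ref{Distance} and~\ref{phi}. For (1), I would fix $x$ and let $i_0$ be an index with $x \in U^{i_0}_{j_0}$ for some $j_0$ (such an $i_0$ exists because $\sU$ covers $X$). Then $D(x, U^{i_0}_{j_0}) = 0 < \tfrac{n^{i_0}}{4}$, so $x^{i_0}_{j_0} \in D^{i_0}_x$ and $\varPhi^{\sU^{i_0}}_{n^{i_0}}(x) \geq \tfrac{n^{i_0}}{4} - 0 = \tfrac{n^{i_0}}{4}$. Since every term in the defining sum for $b^n_x$ is non-negative (the coefficients $n^{k-i+1}$ are positive, $\varPhi \geq 0$ by construction, and $\chi_{D^i_x} \geq 0$), the $\ell^1$-norm is at least the single contribution at $y = x^{i_0}_{j_0}$ coming from level $i_0$, which is $n^{k-i_0+1} \cdot \tfrac{n^{i_0}}{4} = \tfrac{n^{k+1}}{4}$. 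Finiteness of $\|b^n_x\|_1$ follows from (2): each $D^i_x$ will be shown to be bounded (uniformly in $x$ and $i$, since there are only finitely many levels), hence finite, so $b^n_x$ is supported on a finite set and each coefficient is finite.

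For (2), I would bound $\supp b^n_x$. If $b^n_x(y) \neq 0$, then $y = x^i_j$ for some $i \leq k$ and some $j$ with $D(x, U^i_j) < \tfrac{n^i}{4}$ \emph{and} $\varPhi^{\sU^i}_{n^i}(x) \neq 0$ (so in fact $\varphi^{\sU^i}_{j',n^i}(x) \neq 0$ for the unique relevant $j'$ by Proposition~\ref{phi}(1), but we only need $D(x, U^i_j) < \tfrac{n^i}{4}$). Then there is a point $z^i_j \in U^i_j$ with $(x, z^i_j) \in E^{m}$ where $m = D(x, U^i_j) \leq \tfrac{n^i}{4} \leq \tfrac{n^k}{4}$ (using $n > 1$ and $i \leq k$). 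Since $\sU^i$ is uniformly bounded, $\Delta_{\sU^i} \in \sE$, so $(z^i_j, x^i_j) \in \Delta_{\sU^i}$, giving $(x, x^i_j) \in E^{\lceil n^k/4 \rceil} \circ \Delta_{\sU^i}$. Taking $S$ to be the (finite) union over $i = 1, \ldots, k$ of $E^{\lceil n^k/4 \rceil} \circ \Delta_{\sU^i}$ — a finite union of compositions of entourages, hence an entourage — we get $y = x^i_j \in S[x]$. This $S$ also shows each $D^i_x \subset S[x]$, closing the finiteness gap in (1).

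For (3), fix $z, w$. Comparing $b^n_z$ and $b^n_w$ term-by-term over levels $i$, the natural estimate is
\[
  \|b^n_z - b^n_w\|_1 \leq \sum_{i=1}^k n^{k-i+1} \, \bigl\| \varPhi^{\sU^i}_{n^i}(z)\, \chi_{D^i_z} - \varPhi^{\sU^i}_{n^i}(w)\, \chi_{D^i_w} \bigr\|_1 .
\]
The key point at each level is that $\varPhi^{\sU^i}_{n^i}$ is supported, at any given point, on the single set $U^i_{j}$ nearest to that point within distance $\tfrac{n^i}{4}$ (Proposition~\ref{phi}(1)), and $D^i_z$ is exactly the set of chosen basepoints $x^i_{j}$ whose sets are within $\tfrac{n^i}{4}$ of $z$. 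One shows that if $\varPhi^{\sU^i}_{n^i}(z) \neq 0$ and $\varPhi^{\sU^i}_{n^i}(w) \neq 0$ then both are governed by the \emph{same} index $j$ whenever $D(z,w)$ is small relative to the gap — using $E^{n^i}$-disjointness exactly as in the proof of Proposition~\ref{phi}(1) — so $\chi_{D^i_z}$ and $\chi_{D^i_w}$ agree on the relevant basepoint, and the level-$i$ contribution collapses to $|\varPhi^{\sU^i}_{n^i}(z) - \varPhi^{\sU^i}_{n^i}(w)| \leq D(z,w)$ by Proposition~\ref{phi}(2). If one of $\varPhi^{\sU^i}_{n^i}(z)$, $\varPhi^{\sU^i}_{n^i}(w)$ is $0$, the $1$-Lipschitz bound of Proposition~\ref{phi}(2) again gives the level-$i$ difference $\leq D(z,w)$ on a single-point support. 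Summing, $\|b^n_z - b^n_w\|_1 \leq D(z,w)\sum_{i=1}^k n^{k-i+1} = D(z,w)\,(n^k + n^{k-1} + \cdots + n) = \frac{n(n^k-1)}{n-1} D(z,w)$, as claimed.

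The main obstacle I anticipate is item (3), specifically handling the case where the supporting index of $\varPhi^{\sU^i}_{n^i}$ \emph{changes} between $z$ and $w$, or where a basepoint lies in $D^i_z$ but not $D^i_w$. The cleanest way around this is to observe that whenever both $\varPhi$-values are nonzero at level $i$, the two active sets $U^i_j, U^i_{j'}$ satisfy $D(z, U^i_j) < \tfrac{n^i}{4}$ and $D(w, U^i_{j'}) < \tfrac{n^i}{4}$; if $j \neq j'$ then $D(z,w) \geq \tfrac{n^i}{2}$ (same disjointness computation as Proposition~\ref{phi}(1)), in which case the trivial bound $\|\cdot\|_1 \leq \varPhi^{\sU^i}_{n^i}(z) + \varPhi^{\sU^i}_{n^i}(w) \leq \tfrac{n^i}{4} + \tfrac{n^i}{4} = \tfrac{n^i}{2} \leq D(z,w)$ still yields the per-level estimate $\leq D(z,w)$. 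A symmetric argument dispenses with the mismatch between $D^i_z$ and $D^i_w$. So every case reduces to a per-level bound of $D(z,w)$, and the geometric sum does the rest.
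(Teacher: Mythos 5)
Your argument is essentially the paper's own proof: the same lower-bound computation for (1) at a set $U^{i_0}_{j_0}$ containing $x$, the same entourage for (2) built by composing a power of $E$ (of order $n^k/4$) with the boundedness entourages of the families $\sU^i$ and taking a finite union, and the same level-by-level case analysis for (3) (same governing index, giving $|\varPhi(z)-\varPhi(w)|\le D(z,w)$; different indices, forcing $D(z,w)\ge n^i/2$ by $E^{n^i}$-disjointness and using $\varPhi\le n^i/4$; one value zero), followed by the geometric sum $\sum_{i=1}^k n^{k-i+1}=\frac{n(n^k-1)}{n-1}$.

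The one flaw is your justification of $\|b^n_x\|_1<\infty$. In a coarse space a bounded set need not be finite (boundedness only means $B\times B\in\sE$), so the inference ``each $D^i_x$ is bounded, hence finite'' is not valid. The correct reason, which you in effect already use in part (3) when you speak of single-point supports, is that each $D^i_x$ contains at most one point: if $D(x,U^i_j)<\frac{n^i}4$ and $D(x,U^i_{j'})<\frac{n^i}4$ with $j\neq j'$, then points of $U^i_j$ and $U^i_{j'}$ would be related by $E^{m}$ with $m<\frac{n^i}2\le n^i$, contradicting $E^{n^i}$-disjointness --- exactly the argument of Proposition~\ref{phi}(1). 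With that observation $b^n_x$ is supported on at most $k$ points and finiteness is immediate; everything else in your proposal goes through as written.
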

\begin{proof}
1. Fix $x \in X$. Since $\sU$ covers $X$, there exists $U^i_j$ such that $x \in U^i_j$. We have
  \[
    ||b^n_x||_1 \geq n^{k-i+1}\cdot \varPhi^{\sU^i}_{n^i}(x) \cdot \chi_{D_x^i}(x^i_j) = n^{k-i+1} \cdot \left(\frac {n^i}4 - D(x, U^i_j) \right)\cdot 1 = \frac {n^{k+1}} 4.
  \]
  
2. We have $\supp b^n_x = \bigcup_{i=1}^k D^i_x$.
If $x^i_j \in D^i_x$, then $D(x, U^i_j) < \frac{n^i}4$.
By the assumptions, for each $i$ there exists $S_i \in \mathcal{E}$ such that $\sU^i$ is $S_i$-bounded.
Then $(x, x^i_j) \in E^{\lceil \frac n4 \rceil} \circ S_i[x]$.
Let $S = E^{n^k} \circ \bigcup_{i=1}^k S_i$.
Clearly, $S$ is a controlled set and $\sU$ is bounded by $S$.

3. We have
\[
  ||b^n_w - b^n_z||_1 = \left\| \sum_{i=1}^k n^{k-i+1} \left( \varPhi^{\sU^i}_{n^i}(w)\chi_{D^i_w} - \varPhi^{sU^i}_{n^i}(z)\chi_{D^i_z}
  \right) \right\|_1 \leq
\]
\[
  \leq \sum_{i=1}^k n^{k-i+1} ||\varPhi(w)\chi_{D^i_w} - \varPhi(z)\chi_{D^i_z} ||_1
\]

We will prove that the norm in the last expression is bounded by $D(w,z)$. We consider three cases.
\begin{enumerate}
\item[a)] If $D^i_w = D^i_z \neq \emptyset$, then $||\varPhi(w)\chi_{D^i_w} - \varPhi(z)\chi_{D^i_z} ||_1 = | \varPhi(w) - \varPhi(z) | \leq D(w,z)$ by Proposition~\ref{phi} and the fact that sets $D^i_w$ and $D^i_z$ contain a single point.
\item[b)] If $D^i_w \neq D^i_z$ and both sets are non-empty, then $D(w, z) \geq \frac{n^i}2$. Since $\varPhi(w), \varPhi(z) \leq \frac{n^i}4$, we have $||\varPhi(w)\chi_{D^i_w} - \varPhi(z)\chi_{D^i_z} ||_1 \leq \frac{n^i}2 \leq D(w,z)$.
\item[c)] If $D^i_w = \emptyset$, then $\varPhi(w) = 0$. Then $||\varPhi(w)\chi_{D^i_w} - \varPhi(z)\chi_{D^i_z} ||_1 = 
 ||\varPhi(w)\chi_{D^i_z} - \varPhi(z)\chi_{D^i_z} ||_1$
 and we argue as in case a).
\end{enumerate}

Therefore 
\[
\sum_{i=1}^k n^{k-i+1} ||\varPhi(w)\chi_{D^i_w} - \varPhi(z)\chi_{D^i_z} ||_1 \leq \sum_{i=1}^k n^{k-i+1} \cdot D(w,z) = \frac{n(n^k-1)}{n-1}D(w,z).
\]
\end{proof}

Finally, we conclude with the main result of this section.

\begin{theorem}\label{cpCcpA}
  If a coarse space $(X, \mathcal{E})$ has coarse property C, then it has coarse property A.
\end{theorem}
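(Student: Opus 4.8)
The plan is to obtain the desired map by normalizing the function $b^n_x$ from Construction~\ref{constr}. First I would fix $\varepsilon > 0$ and $E \in \mathcal{E}$, and replace $E$ by the symmetric entourage $E \cup E^{-1} \cup \Delta$, so that we may assume $E = E^{-1}$ and $\Delta \subset E$. This also guarantees $E^m \subset E^{m+1}$ for every $m$, so that for any integer $n \geq 2$ the sequence $L_i := E^{n^i}$ is an increasing sequence of entourages, which is what coarse property C expects as input.

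Next I would pin down the integer $n$ to use. Suppose we have already applied coarse property C to the sequence $L_1 \subset L_2 \subset \cdots$ with $L_i = E^{n^i}$, obtaining $k$ and families $\sU^1,\ldots,\sU^k$ that cover $X$, are uniformly bounded, and with $\sU^i$ being $E^{n^i}$-disjoint --- precisely the hypotheses of Construction~\ref{constr}. Then Lemma~\ref{bnx}(1) and (3) give $\|b^n_x\|_1 \geq \tfrac{n^{k+1}}{4}$ and $\|b^n_w - b^n_z\|_1 \leq \tfrac{n(n^k-1)}{n-1} D(w,z)$. Combining these with the elementary bound $\bigl\| \tfrac{u}{\|u\|_1} - \tfrac{v}{\|v\|_1} \bigr\|_1 \leq \tfrac{2\|u-v\|_1}{\|u\|_1}$ (valid whenever $u,v \neq 0$) applied to $a_x := b^n_x / \|b^n_x\|_1$ yields
\[
  \|a_w - a_z\|_1 \;\leq\; \frac{2 \cdot \frac{n(n^k-1)}{n-1} D(w,z)}{\frac{n^{k+1}}{4}} \;=\; \frac{8(n^k-1)}{(n-1)n^k}\, D(w,z) \;<\; \frac{8}{n-1}\, D(w,z).
\]
Crucially, the right-hand side depends only on $n$, not on $k$, so I would simply fix at the outset any integer $n > 1 + 8/\varepsilon$ (in particular $n \geq 2$), and only then run coarse property C on $L_i = E^{n^i}$.

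With $a_x = b^n_x/\|b^n_x\|_1$ (well defined since $\|b^n_x\|_1 \geq n^{k+1}/4 > 0$), the three conditions for coarse property A are then immediate: (1) holds by construction; for (2), if $(x,y) \in E$ then $D(x,y) \leq 1$, so $\|a_x - a_y\|_1 < \tfrac{8}{n-1} < \varepsilon$; for (3), normalization does not change support, so $\supp a_x = \supp b^n_x \subset S[x]$ with the entourage $S$ furnished by Lemma~\ref{bnx}(2).

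There is no real obstacle remaining, since all the substantive work is carried out in Proposition~\ref{Distance}, Proposition~\ref{phi}, Construction~\ref{constr}, and Lemma~\ref{bnx}. The only point requiring care is the bookkeeping around normalization: the Lipschitz constant of $b^n_x$ grows with $k$, which is not under our control (coarse property C hands it to us), so it is essential that the lower bound $n^{k+1}/4$ on $\|b^n_x\|_1$ grows quickly enough to absorb it. This is exactly why the weights $n^{k-i+1}$ in Construction~\ref{constr} are geometric, and why after division the estimate collapses to a quantity governed solely by $n$.
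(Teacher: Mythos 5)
Your proposal is correct and follows essentially the same route as the paper: apply coarse property C to the increasing sequence $E^{n^i}$ after fixing $n$ with $8/(n-1)<\varepsilon$, normalize $b^n_x$ from Construction~\ref{constr}, and verify the three conditions via Lemma~\ref{bnx} and the elementary normalization inequality, with the key point (as you note) being that the bound $8(n^k-1)/((n-1)n^k)<8/(n-1)$ is independent of $k$. Your explicit symmetrization of $E$ and the remark that $L_i=E^{n^i}$ is indeed increasing are small points of care the paper leaves implicit.
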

\begin{proof}
  Fix $\varepsilon > 0$ and $E \in \mathcal{E}$ such that $\Delta \subset E, E = E^{-1}$.
  Take $n \in \mathbb{N}$ such that $0 < \frac 8{n-1} < \varepsilon$.
  By property C of $X$, there exist a finite sequence $\sU^1, \sU^2, \ldots, \sU^k$ of families of subsets of $X$ such that each $\sU^i$ is $E^{n^i}$-disjoint, uniformly bounded, and such that $\sU = \bigcup_{i=1}^{k} \sU^i$ is a cover of $X$.
  Let
  \[
    a_x = \frac{b^n_x}{||b^n_x||_1}.
  \]
  
  Clearly from the definition, $||a_x||_1 = 1$. This is the first condition of coarse property A.

  We will show that if $(x,y) \in E$, then $||a_x - a_y|| < \varepsilon$. This is the second condition of coarse property A. Let $(x,y) \in E$. Then $D(x,y) \leq 1$.
From the inequality
\[
  || \frac u {|| u ||} - \frac v {|| v||}|| \leq \frac 1 {||u||} \cdot 2 \cdot || u - v ||
\]
we have
\[
  || \frac{b^n_x}{||b^n_x||} - \frac{b^n_y}{||b^n_y||} || \leq
  \frac 1{||b^n_x||} \cdot 2 \cdot || b^n_x - b^n_y || \leq
  \frac 4{n^{k+1}} \cdot 2 \cdot \frac {n(n^k-1)}{n-1} \cdot D(x,y) = 8 \cdot \frac {n^k-1}{n^k(n-1)} \leq \frac 8 {n-1} < \varepsilon.
\]

We will show that there exists $S \in \mathcal{E}$ such that for each $x \in X$ we have $\supp(a_x) \subset S[x]$. This is the third condition of coarse property A. From Lemma~\ref{bnx} there exists $S \in \mathcal{E}$ such that for each $x \in X$ we have $\supp b^n_x \subset S[x]$. By the definition, $\supp a_x = \supp b^n_x$, which concludes the proof.
\end{proof}

\subsection{Further properties of coarse property C}

The goal of this section is to establish a union theorem for coarse property C. We will follow the program used by \cite{BD1} that has also been successfully applied in \cite{GTY1} and \cite{BM}. The basic strategy is to make use of a ``saturated union'' to combine two covers into a single cover that keeps some of the boundedness and separated properties of the original covers.  Our approach in the coarse category is similar to Grave's, c.f. \cite[Theorem 3.29]{Gra05}. 

\begin{definition}[\cite{Gra05}]
Let $\sU$ and $\sV$ be families of subsets of the coarse space $(X,\sE)$.  Let $V\in \sV$ and $L\in\sE$ be an entourage.  We define \[N_L(V,\sU) := V \cup \bigcup_{\substack{U\in\sU,\\ L\cap U\times V\neq \emptyset} }U.\]  The {\em $L$-saturated union of $\sV$ in $\sU$} is denoted $\sV \cup_L\sU$ and is given by $\sV \cup_L\sU := \{N_{L}(V,\sU) \mid V\in \sV\} \cup \{U\in \sU \mid L\cap \left(U\times V\right) = \emptyset,\ \forall  V\in \sV\}$. Observe that $\sV\cup_L\{\emptyset\}=\sV$ and $\{\emptyset\}\cup_L\sU=\sU$.
\end{definition}


\begin{proposition}\label{L-sat}
Let $(X,\sE)$ be a coarse space. Let $L$ be a symmetric entourage that contains $\Delta_X$. Put $K=L\cup \left( L\circ\Delta_{\sU}\circ L\right) \cup \left( L\circ\Delta_{\sU}\circ L\circ\Delta_{\sU}\circ L\right)$ and observe that this is an entourage. If $\sU$ is a uniformly bounded $L$-disjoint collection of subsets of $X$ and $\sV$ is a uniformly bounded $K$-disjoint collection of subsets of $X$, then $\sV \cup_L \sU$ is $L$-disjoint and uniformly bounded.
\end{proposition}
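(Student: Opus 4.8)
The plan is to verify directly from the definitions the two conclusions---that $\sV\cup_L\sU$ is uniformly bounded and that it is $L$-disjoint---bearing in mind that every member of $\sV\cup_L\sU$ is either a \emph{saturated} set $N_L(V,\sU)$ with $V\in\sV$, or a \emph{leftover} set $U\in\sU$ with $L\cap(U\times V)=\emptyset$ for all $V\in\sV$. The main tool is an \emph{anchoring} observation: if $x\in N_L(V,\sU)$, then either $x\in V$, so $(x,x)\in\Delta_X$, or $x$ lies in some $U\in\sU$ admitting $(u,v)\in L\cap(U\times V)$, so $(x,u)\in U\times U\subseteq\Delta_{\sU}$ and hence $(x,v)\in\Delta_{\sU}\circ L$; either way there is $v\in V$ with $(x,v)\in\Delta_X\cup(\Delta_{\sU}\circ L)$. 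Recall $\Delta_{\sU},\Delta_{\sV}\in\sE$ because $\sU,\sV$ are uniformly bounded, and that $L$ and $\Delta_{\sU}$ are symmetric.

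For uniform boundedness, the leftover sets lie in $\sU$ and are thus bounded collectively by $\Delta_{\sU}$. For a saturated set $N_L(V,\sU)$, anchor two of its points $x,y$ to $v_x,v_y\in V$; since $(v_x,v_y)\in V\times V\subseteq\Delta_{\sV}$ and $(v_y,y)\in\Delta_X\cup(L\circ\Delta_{\sU})$, composition gives
\[
(x,y)\in\bigl(\Delta_X\cup(\Delta_{\sU}\circ L)\bigr)\circ\Delta_{\sV}\circ\bigl(\Delta_X\cup(L\circ\Delta_{\sU})\bigr)=:M,
\]
an entourage independent of $V$. Hence $\Delta_{\sV\cup_L\sU}\subseteq M\cup\Delta_{\sU}\in\sE$, so $\sV\cup_L\sU$ is uniformly bounded.

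For $L$-disjointness, let $A\neq B$ be members of $\sV\cup_L\sU$ and suppose toward a contradiction that $(a,b)\in L\cap(A\times B)$. If $A$ and $B$ are both leftover sets, then $a$ and $b$ lie in distinct members of $\sU$ joined by $L$, contradicting the $L$-disjointness of $\sU$. If $A=N_L(V,\sU)$ and $B=U'$ is leftover, anchor $a$: if $a\in V$ then $L\cap(V\times U')\neq\emptyset$, so $L\cap(U'\times V)\neq\emptyset$, contradicting that $U'$ is leftover; and if $a$ lies in some $U_a\in\sU$ with $L\cap(U_a\times V)\neq\emptyset$, then either $U_a=U'$ (again contradicting that $U'$ is leftover) or $U_a\neq U'$ and $L\cap(U_a\times U')\neq\emptyset$ contradicts the $L$-disjointness of $\sU$; the case with $A$ leftover and $B$ saturated is symmetric since $L=L^{-1}$. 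The essential case is $A=N_L(V,\sU)$ and $B=N_L(V',\sU)$ with $V\neq V'$: anchoring $a$ to $v_a\in V$ and $b$ to $v_b\in V'$ and composing $(v_a,a)\circ(a,b)\circ(b,v_b)$ yields
\[
(v_a,v_b)\in\bigl(\Delta_X\cup(L\circ\Delta_{\sU})\bigr)\circ L\circ\bigl(\Delta_X\cup(\Delta_{\sU}\circ L)\bigr),
\]
and expanding this into its four composite terms---with $\Delta_X$ acting as a unit for $\circ$---gives exactly $(v_a,v_b)\in L\cup(L\circ\Delta_{\sU}\circ L)\cup(L\circ\Delta_{\sU}\circ L\circ\Delta_{\sU}\circ L)=K$. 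As $v_a\in V$, $v_b\in V'$, and $V\neq V'$, this contradicts the $K$-disjointness of $\sV$, finishing the proof.

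The step I expect to require the most care is this last computation and the accompanying bookkeeping of entourage compositions. In particular, it is crucial to anchor a point lying in $V$ by the diagonal $\Delta_X$ rather than by $L$ (which also contains $\Delta_X$): routing through $L$ would produce composites such as $L\circ L\circ L$ that need not be contained in $K$. The definition of $K$ with its three specific terms is calibrated precisely so that the worst case---both $a$ and $b$ lying in absorbed members of $\sU$---lands exactly in $L\circ\Delta_{\sU}\circ L\circ\Delta_{\sU}\circ L$; everything else reduces to routine use of the entourage axioms (symmetry of $L$ and $\Delta_{\sU}$, associativity of $\circ$, and closure under finite unions).
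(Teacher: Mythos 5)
Your proof is correct and follows essentially the same route as the paper's: the same three-case analysis for $L$-disjointness, with the key composite chain $(v_a,a)\circ(a,b)\circ(b,v_b)$ landing exactly in $K=L\cup(L\circ\Delta_{\sU}\circ L)\cup(L\circ\Delta_{\sU}\circ L\circ\Delta_{\sU}\circ L)$ and contradicting the $K$-disjointness of $\sV$. The only cosmetic difference is in the boundedness step, where you bound $\Delta_{\sV\cup_L\sU}$ directly by an entourage while the paper invokes Grave's characterization via a set $D$ with $N_L(V,\sU)\subset D[\{x\}]$; both are routine, and your version even handles the leftover members of $\sU$ a bit more explicitly.
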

\begin{proof}
Put $D=\Delta_\sV\cup(\Delta_\sV\circ L\circ\Delta_\sU)$. Then, for any set $W\in N_L(V,\sU)$ one can take any $x\in V$ and see that $W\subset D[\{x\}]$. Thus $\sV \cup_L \sU$ is uniformly bounded, by Grave's characterization \cite[Proposition 3.10]{Gra05}.  

To show that $\sV \cup_L \sU$ is $L$-disjoint, let $A,B \in \sV \cup_L \sU$, with $A\neq B$.  We will proceed by cases.  

Case 1:  $A,B \in \{U\in \sU \mid L \cap (U\times V)=\emptyset,\ \forall V\in \sV\}$.  In this case $A$ and $B$ are distinct elements of $\sU$ and so we have that $L \cap A\times B = \emptyset$ since $\sU$ is $L$-disjoint.

Case 2: $A\in\{N_L(V,\sU)\mid V\in \sV\}$ and $B\in \{U\in \sU\mid L\cap (U\times V) = \emptyset,\ \forall V\in \sV\}$. Put $A=N_L(V_A,\sU)$ for a fixed $V_A\in\sV$ and observe that in particular $L\cap (V_A\times B)=\emptyset$. Thus, if $(a,b)\in A\times B\cap L$ then $a$ cannot be in $V_A$. Thus, it must be the case that $a\in U_a$ for some $U_a\in\sU$ such that $V_A\times U_a\cap L\neq\emptyset$. Thus, $a\times b\in L$ contradicts the fact that $\sU$ is $L$-disjoint. We conclude, therefore, that $A$ and $B$ are $L$-disjoint in this case.

Case 3: $A,B \in \{N_{L}(V,\sU)\mid V\in \sV\}$.   Let $A = N_{L}(V_A,\sU)$ and $B=N_{L}(V_B,\sU)$.  Since we are assuming that $A\neq B$, we have $V_A \neq V_B$. 

Suppose we have $a\in A$ and $b\in B$ with $(a,b)\in L$. Since $K$ contains $L$ and $\sV$ is $K$-disjoint, we know that $a$ and $b$ cannot belong to $V_A$ and $V_B$, respectively. Thus, in this case $(a,b)\neq L$.

If $a$ belongs to (say) $U_a\in\sU$ for which $V_A\times U_a\cap L\neq\emptyset$, and $b\in V_B$, then there is a $u_a\in U_a$ and a $v_a\in V_A$ so that $(v_a,b)=(v_a,u_a)\circ(u_a,a)\circ (a,b)\in L\circ\Delta_{\sU}\circ L$, which contradicts the fact that $\sV$ is $K$ disjoint. So, in this case $(a,b)\neq L$. There is a symmetric case (exchanging the roles of $A$ and $B$) that we omit.

Finally, suppose that there is a $U_a\in \sU$ and a $\sU_b\in\sU$ so that $U_a\times V_A\cap L\neq \emptyset$ and $U_b\times V_B\cap L\neq\emptyset$. To arrive at a contradiction, assume that $(a,b)\in L$. Then, we can find $u_a\in U_a$, $u_b\in U_b$, $v_a\in V_A$ and $v_B\in V_B$ so that $(v_a,u_a)\in L$ and $(u_b,v_b)\in L$. Thus, $(v_a,v_b)=(v_a,u_a)\circ(u_a,a)\circ(a,b)\circ(b,u_b)\circ (u_b,v_b)\in L\circ\Delta_{\sU}\circ L\circ\Delta_{\sU}\circ L$. We conclude that $(v_a,v_b)\in K$, which is a contradiction. 

Thus, in each case we find $A\times B\cap L=\emptyset$ as required.
\end{proof}

We begin by proving a simple finite union theorem following the techniques of \cite{Gra05}. Later we prove an infinite union theorem using different techniques.

\begin{theorem}
Let $(X,\sE)$ be a coarse space. Suppose that $X=X_1\cup X_2$, where each $X_i$ has the coarse structure it inherits from $X$. If $X_1$ and $X_2$ have coarse property C then so does $X$.
\end{theorem}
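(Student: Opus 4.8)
The plan is to glue the covers of $X_1$ and $X_2$ by means of the $L$-saturated union and Proposition~\ref{L-sat}, following Grave's strategy for the finite union theorem. First I would reduce to the case in which every $L_i$ is symmetric and contains $\Delta_X$: replacing $L_i$ by $L_i\cup L_i^{-1}\cup\Delta_X$ only enlarges the entourages and preserves the nesting, and a family that is disjoint with respect to the larger entourage is disjoint with respect to $L_i$. Since $X_1$ carries the subspace coarse structure, applying coarse property C of $X_1$ to the sequence $(L_i\cap(X_1\times X_1))_i$ produces families $\sU^1,\ldots,\sU^{k_1}$ of subsets of $X_1$ (hence of $X$) that are uniformly bounded in $X$, cover $X_1$, and have $\sU^i$ being $L_i$-disjoint in $X$ — disjointness in the subspace and in $X$ coincide here because the relevant products lie inside $X_1\times X_1$.

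The delicate point, and the reason the two invocations of property C must be performed in sequence rather than in parallel, is that Proposition~\ref{L-sat} requires at level $i$ a family $\sV^i$ that is $K_i$-disjoint, where $K_i=L_i\cup(L_i\circ\Delta_{\sU^i}\circ L_i)\cup(L_i\circ\Delta_{\sU^i}\circ L_i\circ\Delta_{\sU^i}\circ L_i)$ depends on the family $\sU^i$ already chosen. So only now would I form the entourages $K_i$ (setting $K_i=L_i$ for $i>k_1$, consistent since $\Delta_{\{\emptyset\}}=\emptyset$), and — because the $K_i$ need not be nested, $\Delta_{\sU^i}$ and $\Delta_{\sU^{i+1}}$ being unrelated — pass to the genuinely increasing sequence $\widehat K_i:=K_1\cup\cdots\cup K_i$ of entourages. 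Applying coarse property C of $X_2$ to $(\widehat K_i\cap(X_2\times X_2))_i$ yields families $\sV^1,\ldots,\sV^{k_2}$ that are uniformly bounded, cover $X_2$, and have $\sV^i$ being $\widehat K_i$-disjoint, hence $K_i$-disjoint, in $X$. Put $k=\max(k_1,k_2)$ and pad the shorter list with copies of $\{\emptyset\}$.

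For each $i\le k$ set $\sW^i:=\sV^i\cup_{L_i}\sU^i$. The hypotheses of Proposition~\ref{L-sat} hold at every level ($L_i$ symmetric, $\Delta_X\subset L_i$; $\sU^i$ uniformly bounded and $L_i$-disjoint; $\sV^i$ uniformly bounded and $K_i$-disjoint), so each $\sW^i$ is uniformly bounded and $L_i$-disjoint. Finally, unravelling the definition of the saturated union gives $\bigcup_{W\in\sV^i\cup_{L_i}\sU^i}W=\bigl(\bigcup_{U\in\sU^i}U\bigr)\cup\bigl(\bigcup_{V\in\sV^i}V\bigr)$, since every $U\in\sU^i$ is either absorbed into some $N_{L_i}(V,\sU^i)$ or retained, and every $V\in\sV^i$ lies inside $N_{L_i}(V,\sU^i)$; taking the union over $i$ shows that $\sW:=\bigcup_{i=1}^k\sW^i$ covers $X_1\cup X_2=X$. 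Hence $\sW$ witnesses coarse property C for $X$.

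The main obstacle is exactly the one flagged above: the inflation needed to absorb one cover into the other is not known until the first cover is built, and the resulting inflated entourages are not automatically nested. Both issues are handled routinely (build the cover of $X_1$ first, then feed $X_2$ the cumulative unions $\widehat K_i$), but the bookkeeping — ensuring the sequence given to $X_2$ is increasing and that padding with empty families is harmless — is where attention is needed; everything else is a direct appeal to Proposition~\ref{L-sat}.
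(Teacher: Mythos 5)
Your proof is correct and follows essentially the same route as the paper: apply coarse property C to $X_1$ first, inflate the entourages using the diagonal of the resulting cover to challenge $X_2$, and glue the two covers level by level with the $L_i$-saturated union via Proposition~\ref{L-sat}. The only cosmetic difference is that the paper builds each $K_i$ from $\Delta_{\sU}$ of the whole cover of $X_1$ (so the $K_i$ are automatically nested), whereas you use the per-level $\Delta_{\sU^i}$ and restore monotonicity with the cumulative unions $\widehat K_i$; both satisfy the hypotheses of the proposition.
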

\begin{proof}
Let $L_1\subset L_2\subset L_3\subset \cdots$ be a sequence of symmetric entourages containing $\Delta_X$.  

Since $X_1$ has coarse property C, there exists a finite sequence $\sU_1, \sU_2, \ldots , \sU_{n_1}$ such that $\sU =\bigcup_{i=1}^{n_1} \sU_i$ covers $X_1$, each $\sU_i$ is uniformly bounded and each $\sU_i$ is $L_i$-disjoint.   Since $X_2$ has coarse property C, there exists a finite sequence $\sV_1, \sV_2, \ldots, \sV_{n_2}$ such that $\sV =\bigcup_{i=1}^{n_2} \sV_i$ covers $X_2$, each $\sV_i$ is uniformly bounded and each $\sV_i$ is  $K_i$-disjoint, where $K_i=L_i\cup \left(L_i\Delta_{\sU}L_i\right)\cup\left(L_i\Delta_{\sU}L_i\Delta_{\sU}L_i\right)$.

Put $n=\max\{n_1,n_2\}$ and for each $i=1,2,\ldots, n$, set $\sW_i = \sV_i \cup_{L_i}\sU_i$.  (Recall that even if one of the two families is empty, the saturated union is still defined.) We observe that since $\sU$ and $\sV$ cover $X_1$ and $X_2$ respectively, $\sW=\bigcup_{i=1}^n \sW_i$ covers $X$.

By Proposition \ref{L-sat}, we have that $\sW_i$ will be uniformly bounded and $L_i$-disjoint, for $i=1,2,\ldots \min\{n_1,n_2\}$. For higher indices $\sW_i$ is equal to $\sU_i$ (if $n_2>n_1$) or $\sV_i$ (if $n_1<n_2$). Thus, we have a finite collection of uniformly bounded subsets that cover $X$ and are $L_i$-disjoint for $i=1,2,\ldots,n$. We conclude that has coarse property C.
\end{proof}

Our next goal is to prove that certain infinite unions preserve coarse property C. Gromov \cite{Gr03} provided the first known examples of finitely generated groups without property A (see also \cite{Nowak}). Since (in the metric case) asymptotic property C implies property A \cite{Dr00}, we can conclude that there are countable discrete sets that do not have property C. Therefore we require some measure of control over the structure of the union in order to get a positive result on property C. As a first piece of this control we define uniform coarse property C. 

\begin{definition} Let $(X,\sE)$ be a coarse space. By a family of coarse subspaces $(X_\alpha,\sE_\alpha)$ we mean a collection of subsets $\{X_\alpha\}_\alpha$ of $X$ where the coarse structures $\sE_\alpha$ are obtained by restricting $\sE$ to $X_\alpha\times X_\alpha$. We say that the family of subspaces $X_\alpha$ has \emph{uniform coarse property C} if for any sequence $L_1\subset L_2\subset L_3\subset \cdots$ in $\sE$ there is a sequence $K_1\subset K_2\subset K_3\subset \cdots$ in $\sE$ and an $N \in \mathbb{N}$ such that for each $\alpha$ there exists a finite sequence $\sU_\alpha^1, \sU_\alpha^2, \ldots, \sU_\alpha^n$ with $n\leq N$ of subsets of $X_\alpha$ so that 
\begin{enumerate}
	\item for each $\alpha$, $\sU_\alpha=\bigcup_{i=1}^n \sU_\alpha^i$ covers $X_\alpha$;
	\item for each $\alpha, \Delta_{\sU_\alpha^i} \subset K_i$, (i.e. $\sU_\alpha^i$ is $K_i$-bounded); and
	\item for each $\alpha$ and for each $i$, $\sU_\alpha^i$ is $L_i$-disjoint.
\end{enumerate}
\end{definition}


\begin{theorem} \label{cPCUnion}
Suppose that $(X,\sE)$ is a coarse space that can be written as a union of coarse subspaces $X=\bigcup_{\alpha}X_{\alpha}$, where the family $X_\alpha$ has uniform coarse property C. Suppose moreover that for each entourage $K\in\mathcal{E}$ there is a subset $Y_K\subseteq X$ with coarse property C such that $\{X_\alpha\setminus Y_K\}_\alpha$ forms an $K$-disjoint collection. Then, $X$ has coarse property C.
\end{theorem}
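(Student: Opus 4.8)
The plan is to glue together covers of the pieces $X_\alpha$ (produced by uniform coarse property C) with a cover of the overlap set $Y_K$ (produced by its own coarse property C), being careful about the order in which the two applications are made. Given a sequence $L_1\subset L_2\subset\cdots$ of entourages of $X$, which we may take to be symmetric and to contain $\Delta_X$, I would first apply uniform coarse property C of the family $\{X_\alpha\}$ to this sequence. This produces entourages $K_1\subset K_2\subset\cdots$, an integer $N$, and for each $\alpha$ a sequence $\sU_\alpha^1,\dots,\sU_\alpha^{n_\alpha}$ with $n_\alpha\le N$ covering $X_\alpha$, such that each $\sU_\alpha^i$ is $K_i$-bounded and $L_i$-disjoint; pad each sequence with empty families to length exactly $N$. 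The point is that $N$ and the $K_i$ do not depend on $\alpha$. Now set $K:=L_N$ and let $Y_K\subseteq X$ be the subspace furnished by the hypothesis, so that $Y_K$ has coarse property C and $\{X_\alpha\setminus Y_K\}_\alpha$ is $L_N$-disjoint. Since $N$ is fixed at this stage, apply coarse property C of $Y_K$ to the shifted sequence $L_{N+1}\subset L_{N+2}\subset\cdots$, obtaining families $\sV^1,\dots,\sV^m$ covering $Y_K$, each uniformly bounded, with $\sV^j$ being $L_{N+j}$-disjoint.

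With these in hand, form $N+m$ families on $X$: for $1\le i\le N$ set $\sW^i:=\{\,U\setminus Y_K : U\in\sU_\alpha^i\ \text{for some }\alpha\,\}$, and for $1\le j\le m$ set $\sW^{N+j}:=\sV^j$. I would then check the three conditions of Definition~\ref{cpC}. Since $\bigcup_\alpha\bigcup_i\sU_\alpha^i$ covers $X$, the families $\sW^1,\dots,\sW^N$ cover $X\setminus Y_K$; together with $\sW^{N+1},\dots,\sW^{N+m}$, which cover $Y_K$, the whole collection covers $X$. Each $\sW^i$ with $i\le N$ is $K_i$-bounded because $W\times W\subset U\times U$ whenever $W=U\setminus Y_K$, and each $\sW^{N+j}$ is uniformly bounded because $\sV^j$ is. For disjointness: if $t=N+j$ this holds by the choice of $\sV^j$; if $t=i\le N$ and $A\ne B$ are members of $\sW^i$, then either $A,B$ come from the same $X_\alpha$, so they are $L_i$-disjoint because $\sU_\alpha^i$ is, or they come from different pieces $X_\alpha,X_\beta$, in which case $A\times B\subset(X_\alpha\setminus Y_K)\times(X_\beta\setminus Y_K)$, which misses $L_N$ and hence $L_i$.

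The one genuinely delicate point is the sequencing: uniform coarse property C of $\{X_\alpha\}$ must be invoked first, since only then are $N$ and the $K_i$ available, which is what lets us choose $K=L_N$, locate $Y_K$, and feed the tail $L_{N+1}\subset L_{N+2}\subset\cdots$ into coarse property C of $Y_K$ so that its families occupy indices $N+1,\dots,N+m$ with exactly the disjointness those positions require. Trying instead to place the $Y_K$-families in the first block of indices would require knowing their number $m$ before the first application, which is unavailable. The remaining verifications are routine manipulations with entourages, using only that a subfamily inherits boundedness and disjointness from the family it sits inside.
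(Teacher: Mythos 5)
Your argument is correct, but it takes a genuinely different route from the paper's. The paper combines the cover of the core with the restricted covers $\overline{\sU^i}=\bigcup_\alpha\{U\setminus Y_K : U\in\sU_\alpha^i\}$ \emph{at the same index} $i$, via the $L_i$-saturated union $\sW^i=\sV^i\cup_{L_i}\overline{\sU^i}$ and Proposition~\ref{L-sat}; to make the saturation work, the families covering $Y_K$ must be chosen disjoint with respect to the enlarged entourages $L_i\cup(L_i\Delta_\sU L_i)\cup(L_i\Delta_\sU L_i\Delta_\sU L_i)$, and the core itself is taken for the larger entourage $K=L_n\cup(L_n\Delta_\sU L_n)\cup(L_n\Delta_\sU L_n\Delta_\sU L_n)$ rather than your $K=L_N$. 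You avoid saturation altogether by exploiting the slack in Definition~\ref{cpC}: the number of families is not fixed in advance and the $i$-th family only needs $L_i$-disjointness, so you may apply coarse property C of $Y_K$ to the tail $L_{N+1}\subset L_{N+2}\subset\cdots$ and place its families at indices $N+1,\dots,N+m$, while the restricted families at indices $i\le N$ are $L_i$-disjoint because within one $X_\alpha$ this is inherited from $\sU_\alpha^i$ and across distinct pieces it follows from $L_N$-disjointness of $\{X_\alpha\setminus Y_K\}$ together with $L_i\subset L_N$. Your handling of the quantifier order (uniformity first, yielding $N$ and the $K_i$, then the choice of $K$ and of $Y_K$, then the shifted application to $Y_K$) is exactly the delicate point and is done correctly. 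What each approach buys: yours is more elementary, needing only set inclusions and monotonicity of the $L_i$ and dispensing with Proposition~\ref{L-sat}, at the cost of producing $N+m$ families instead of roughly $\max\{k,n\}$; the paper's saturated-union scheme keeps the number of families controlled, which is why the same argument transfers verbatim to the asymptotic-dimension analogue (Theorem~\ref{cFADUnion}), where an index-shifting trick is unavailable. One caveat you share with the paper: the disjointness hypothesis on $\{X_\alpha\setminus Y_K\}_\alpha$, read literally as a condition on \emph{distinct sets}, gives no information in the degenerate situation where different indices $\alpha\neq\beta$ yield the same set; both your proof and the paper's gloss over this edge case in the same way, so it is not a gap relative to the paper's standard of rigor.
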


\begin{proof}
 Let $L_1 \subseteq L_2 \subseteq \cdots$ be a sequence of entourages.  For each $\alpha$, choose families $\sU_\alpha^i$ of $L_i$-disjoint, $K_i$-bounded sets, where $\sU_\alpha = \cup_{i=1}^n \sU_\alpha^i$ is a cover of $X_\alpha$.  Let $\sU = \cup_\alpha \sU_\alpha$ and put $K = L_n\cup (L_n\Delta_\sU L_n)\cup (L_n\Delta_\sU L_n\Delta_\sU L_n)$.  Take $Y_K$ as in the statement of the theorem.

Since $Y_K$ has coarse property C, let $\sV^1, \sV^2, ..., \sV^k$ be $L_i\cup (L_i\Delta_\sU L_i)\cup (L_i\Delta_\sU L_i\Delta_\sU L_i)$-disjoint, uniformly bounded families of sets whose union covers $Y_K$.

Let $\overline{\sU_\alpha^i}$ denote the restriction of $\sU_\alpha^i$ to $X_\alpha \setminus Y_K$ and put $\overline{\sU^i} = \cup_\alpha \overline{\sU_\alpha^i}$.  Since $\overline{\sU_\alpha^i}$ are each $L_i$-disjoint and $X_\alpha \setminus Y_K$ are $K$-disjoint and thus $L_i$-disjoint $\forall i$, we have that $\overline{\sU^i}$ is $L_i$-disjoint.  We note that $\Delta_{\overline{\sU^i}} \subset K_i$, since each $\Delta_{\overline{\sU_\alpha^i}} \subset K_i$ and therefore $\overline{\sU^i}$ is uniformly bounded.

Now, set $\sW^i = \sV^i \cup_{L_i} \overline{\sU^i}$ for $i = 1,2\cdots,\max\{k,n\}$. (As before, this is defined even if one family is empty.)  By Proposition \ref{L-sat}, $\sW^i$ is $L_i$-disjoint and uniformly bounded.  Clearly, $\sW = 
\cup \sW^i$ covers $X$ and so $X$ has coarse property C.
\end{proof}

We can use the same techniques show that finite coarse asymptotic dimension is preserved by infinite unions with the analogous core structure.

\begin{definition} A family of coarse subspaces $(X_\alpha,\sE_\alpha)$ of the coarse space $(X,\sE)$ satisfies the inequality $\as_C X_\alpha \leq n$ {\em uniformly} if for any entourage $L$ there is an entourage $K$ such that for each $\alpha$ there exists a finite sequence $\sU_\alpha^1, \sU_\alpha^2, \ldots, \sU_\alpha^n$ so that 
\begin{enumerate}
	\item for each $\alpha$, $\sU_\alpha=\bigcup_{i=1}^n \sU_\alpha^i$ covers $X_\alpha$;
	\item for each $\alpha, \Delta_{\sU_\alpha} \subset K$, (i.e. $\sU_\alpha^i$ is $K$-bounded); and
	\item for each $\alpha$ and for each $i$, $\sU_\alpha^i$ is $L$-disjoint.
\end{enumerate}
\end{definition}

\begin{theorem} \label{cFADUnion}
Suppose that $X=\bigcup_{\alpha}X_{\alpha}$, where $\as_C X_\alpha \leq n$ uniformly and for each entourage $L\in\mathcal{E}$ there is a subset $Y_L\subseteq X$ with $\as_C Y_L \leq n$ such that $\{X_\alpha\setminus Y_L\}$ forms an $L$-disjoint collection. Then, $\as_C X \leq n$.
\end{theorem}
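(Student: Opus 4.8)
The strategy is to mirror the proof of Theorem~\ref{cPCUnion} almost verbatim, since finite coarse asymptotic dimension is simply the ``single-entourage, bounded-$n$'' version of coarse property C, and the saturated-union apparatus of Proposition~\ref{L-sat} is precisely what keeps the dimension bound from growing. First I would fix an arbitrary entourage $L$; without loss of generality (replacing $L$ by $L\cup L^{-1}\cup\Delta_X$) assume $L$ is symmetric and contains the diagonal. I then apply the hypothesis that $\as_C X_\alpha\le n$ uniformly to get a single entourage $K_0$ (depending only on $L$, not on $\alpha$) and, for each $\alpha$, families $\sU_\alpha^1,\dots,\sU_\alpha^n$ that cover $X_\alpha$, are $L$-disjoint, and are $K_0$-bounded. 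Set $\sU=\bigcup_\alpha\bigcup_i\sU_\alpha^i$; this is $K_0$-bounded, so $\Delta_\sU\subseteq K_0$ is an entourage, and I can form the ``inflated'' entourage $K=L\cup(L\circ\Delta_\sU\circ L)\cup(L\circ\Delta_\sU\circ L\circ\Delta_\sU\circ L)$ exactly as in Proposition~\ref{L-sat}. Then I invoke the second hypothesis with this $K$ to obtain $Y_K=Y_L'$ (renaming) with $\as_C Y_K\le n$ and with $\{X_\alpha\setminus Y_K\}_\alpha$ being $K$-disjoint (hence in particular $L$-disjoint).

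Next I would build the two covers to be merged. On the $Y_K$ side: since $\as_C Y_K\le n$, for the entourage $K$ itself there are families $\sV^1,\dots,\sV^n$ covering $Y_K$, each $K$-disjoint (hence $L$-disjoint) and each $K_1$-bounded for some single entourage $K_1$. On the complement side: let $\overline{\sU_\alpha^i}$ be the restriction of $\sU_\alpha^i$ to $X_\alpha\setminus Y_K$, and set $\overline{\sU^i}=\bigcup_\alpha\overline{\sU_\alpha^i}$. The key observation is that $\overline{\sU^i}$ is $L$-disjoint: within a fixed $\alpha$, distinct sets of $\overline{\sU_\alpha^i}$ are $L$-disjoint because $\sU_\alpha^i$ was; across distinct $\alpha$, any set of $\overline{\sU_\alpha^i}$ lies inside $X_\alpha\setminus Y_K$ and these pieces are $K$-disjoint, hence $L$-disjoint. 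Boundedness is immediate since $\Delta_{\overline{\sU^i}}\subseteq\Delta_\sU\subseteq K_0$. Finally I set $\sW^i=\sV^i\cup_L\overline{\sU^i}$ for $i=1,\dots,n$. Here I need Proposition~\ref{L-sat} applied with the $L$-disjoint family being $\overline{\sU^i}$ (which forces the ``$K$'' of that proposition to be $L\cup(L\circ\Delta_{\overline{\sU^i}}\circ L)\cup(L\circ\Delta_{\overline{\sU^i}}\circ L\circ\Delta_{\overline{\sU^i}}\circ L)$); since $\Delta_{\overline{\sU^i}}\subseteq\Delta_\sU$, this is contained in $K$, so the $K$-disjointness of $\sV^i$ guarantees the hypotheses of Proposition~\ref{L-sat} are met, and $\sW^i$ comes out $L$-disjoint and uniformly bounded. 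The union $\sW=\bigcup_{i=1}^n\sW^i$ covers $X$ because $\sV=\bigcup\sV^i$ covers $Y_K$ and $\overline{\sU}=\bigcup\overline{\sU^i}$ covers $X\setminus Y_K$, and the saturated union never drops points. Since there are exactly $n+1$ families (indices $1,\dots,n$) — wait, the count: we have $n$ families $\sW^1,\dots,\sW^n$, matching $\as_C\le n-1$? No: $\as_C X_\alpha\le n$ means $n+1$ families in the convention of this paper's first definition, but the uniform definition as stated uses $n$ families indexed $1,\dots,n$; I would follow the indexing of the stated definition, producing $n$ families $\sW^1,\dots,\sW^n$, hence $\as_C X\le n$ in that same indexing convention.

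\textbf{Main obstacle.} The genuinely delicate point is bookkeeping with the entourages, not any new idea: I must make sure that the single inflated entourage $K$ built from $L$ and $\Delta_\sU$ simultaneously (i) serves as the disjointness challenge handed to the $Y$-hypothesis and to the $\as_C Y_K\le n$ family $\sV^i$, and (ii) dominates the entourage that Proposition~\ref{L-sat} actually requires, namely the one built from $\Delta_{\overline{\sU^i}}$ rather than $\Delta_\sU$. The inclusion $\Delta_{\overline{\sU^i}}\subseteq\Delta_{\sU}$ is what reconciles these, so this goes through, but it is the step where a careless choice of the order of quantifiers (choosing $Y$ before fixing $\sU$, or using a per-$i$ entourage where a uniform one is needed) would break the argument. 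A secondary nuisance is that the uniform-dimension hypothesis as stated bounds $\Delta_{\sU_\alpha}$ (the whole union over $i$) by one $K$, whereas the coarse-property-C union proof used a sequence $K_i$; this is actually a simplification here, and I would just note that a single $K_0$ bounds everything. Everything else — symmetrizing $L$, checking the cover condition, the three cases inside Proposition~\ref{L-sat} — is already done in the earlier results and I would simply cite them.
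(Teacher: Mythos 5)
Your proposal follows essentially the same route as the paper's own proof: invoke the uniform dimension hypothesis for $L$, inflate to $K=L\cup(L\circ\Delta_\sU\circ L)\cup(L\circ\Delta_\sU\circ L\circ\Delta_\sU\circ L)$, take the core $Y_K$ with its $K$-disjoint bounded families, restrict the $\sU_\alpha^i$ to $X_\alpha\setminus Y_K$, and merge via the saturated union and Proposition~\ref{L-sat}. Your extra remarks (that $\Delta_{\overline{\sU^i}}\subseteq\Delta_\sU$ reconciles the entourage required by Proposition~\ref{L-sat} with the one handed to $Y_K$, and the $n$ versus $n+1$ indexing of the families) only make explicit points the paper leaves implicit, so the argument is correct and essentially identical.
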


\begin{proof}
We will follow the techniques in \cite[Theorem 1]{BD1}. Let $L$ be an entourage.  For each $\alpha$, choose families $\sU_\alpha^1, \cdots,\sU_\alpha^n $ of $L$-disjoint, $K$-bounded sets, where $\sU_\alpha = \cup_{i=1}^n \sU_\alpha^i$ is a cover of $X_\alpha$.  Let $\sU = \cup_\alpha \sU_\alpha$ and put $M = L\cup (L\Delta_\sU L)\cup(L\Delta_\sU L\Delta_\sU L)$.  Take $Y_M$ as in the statement of the theorem.

Since $\as_C Y_M \leq n$, let $\sV^0, \sV^1, ..., \sV^n$ be $M$-disjoint, uniformly bounded families of sets whose union covers $Y_K$.

Let $\overline{\sU_\alpha^i}$ denote the restriction of $\sU_\alpha^i$ to $X_\alpha \setminus Y_K$ and put $\overline{\sU^i} = \cup_\alpha \overline{\sU_\alpha^i}$.  Since $\overline{\sU_\alpha^i}$ are each $L$-disjoint and $X_\alpha \setminus Y_M$ are $M$-disjoint and thus $L$-disjoint, we have that $\overline{\sU^i}$ is $L$-disjoint.  We note that $\Delta_{\overline{\sU^i}} \subset K$, since each $\Delta_{\overline{\sU^i_\alpha}} \subset K$ and therefore $\overline{\sU^i}$ is uniformly bounded.

Now, set $\sW^i = \sV^i \cup_{L} \overline{\sU^i}$ for $i = 0,1,\ldots,n$.  By Proposition \ref{L-sat}, $\sW^i$ is $L$-disjoint and uniformly bounded.  Clearly, $\sW = 
\cup \sW^i$ covers $X$ and so $\as_C X \leq n$.
\end{proof}

\section{Coarse Notions of Decomposition Complexity}

Following the ideas of the previous section we can translate notions of decomposition complexity to coarse spaces. Doing so, we obtain the following definition for a coarse version of finite decomposition complexity. We define three notions of decomposition complexity and are chiefly concerned with its finiteness: finite coarse decomposition complexity, finite weak coarse decomposition complexity, and straight finite coarse decomposition complexity.

Let $(X,\sE)$ be a coarse space; let $\sY$ be a family of coarse subspaces of $\sX$; let $L\in\sE$ be an entourage. An {\em $L$-decomposition} of $X$ over $\sY$ is a decomposition
\[X=X^0\cup X^1\qquad X^i=\bigsqcup_{L}X^i_{j}\] where each $X^i_{j}\in\sY$ and the union notation means that this union is $L$-disjoint in the sense that $X^i_{j}\neq X^i_{j'}$ implies $X^i_{j}\times X^i_{j'}\cap L=\emptyset$. Recall that we call the coarse family $\sY$ uniformly bounded if $\cup_{Y\in\sY}Y\times Y$ is an entourage.

Let $\sX$ and $\sY$ be families of coarse subspaces of a coarse space $(X,\sE)$. We say that the family $\sX$  admits an $L$-decomposition over $\sY$ if every $X\in\sX$ admits an $L$-decomposition over $\sY$. Thus, we may interpret a decomposition of a coarse space $X$ as a decomposition of the family $\{X\}$.

The decomposition game for the coarse space $X$ works as follows: Player 1 asserts an entourage $L_1$; Player 2 responds with a family $\sY_1$ and an $L_1$-decomposition of $\{X\}$ over $\sY_1$; then, Player 1 asserts another entourage $L_2$ and Player 2 responds with an $L_2$-decomposition of $\sY_1$ over a family $\sY_2$. The game continues in this way. It ends and Player 2 is declared the winner if at some finite stage the family chosen by Player 2 (over which the decomposition occurs) is uniformly bounded.

\begin{definition} The coarse space $X$ is said to have {\em finite coarse decomposition complexity} if Player 2 has a winning strategy in the coarse decomposition game for the family $\{X\}$, i.e., the process of choosing entourages $L_i$ and families $\sY_i$ with $L_i$-decompositions ends at a finite stage with a uniformly bounded family.
\end{definition}
\begin{definition}
The coarse space $X$ is said to have {\em straight finite coarse decomposition complexity} if for any sequence of controlled sets $L_1\subset L_2\subset\cdots$ there exists some finite sequence of families $\sY_1,\sY_2,\ldots,\sY_n$ of subsets of $X$ so that with $\sY_0=\{X\}$, we have $\sY_{i-1}\xrightarrow{L_i}\sY_i$ with uniformly $\sY_n$ bounded.
\end{definition}

As mentioned above, Dydak \cite{Dydak15} defined the notion of countable asymptotic dimension for metric spaces and showed that metric spaces with straight finite decomposition complexity have countable asymptotic dimension. Later, Dydak and Virk \cite{DV} showed that straight finite decomposition complexity and countable asymptotic dimension coincide for metric spaces. In the current paper we have not attempted to ``coarsify'' the notion of countable asymptotic dimension nor address its relation to our straight finite coarse decomposition complexity.


In the metric setting, the proof that finite asymptotic dimension implies finite decomposition complexity relies on embedding the space with finite asymptotic dimension into a product of trees. The construction of these universal spaces (products of trees) relies on a sequence of anti-\v{C}ech approximations, which do not seem to have a straightforward analog in the coarse setting (see also \cite[Definition 3.18]{Gra05}).  Following \cite{GTY1}, we weaken the definition of finite decomposition complexity in order to get a property that is clearly implied by finite asymptotic dimension. We do not resolve the issue of whether finite weak coarse decomposition complexity is distinct from finite coarse decomposition complexity. 

Let $(X,\sE)$ be a coarse space and let $\sY$ be a family of coarse subspaces of $X$. Given an entourage $L\in\sE$ and a positive integer $d$, we say that $X$ admits a {\em weak $(L,d)$-decomposition over $\sY$} if 
\[X=X^1\cup X^2\cup \cdots \cup X^d\] so that, for each $i=1,\ldots, d$, \[X^i=\bigsqcup_{L}X^i_{j}\]
where each $X^i_{j}\in\sY$ and the union is $L$-disjoint in the sense that $X^i_{j}\neq X^i_{j'}$ implies $X^i_{j}\times X^i_{j'}\cap L=\emptyset$. 

\begin{definition} The coarse space $X$ will be said to have {\em  finite weak coarse decomposition complexity} if the second player has a winning strategy in the weak coarse decomposition game.
\end{definition}

A simple consequence of this definition is the following proposition.

\begin{proposition}\label{cFADcwFDC}
Let $(X,\sE )$ be a coarse space with $\as_C X \le n$.  Then $(X,\mathcal{E})$ has finite weak coarse decomposition complexity.
\end{proposition}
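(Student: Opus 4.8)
The plan is to show that Player~2 can force a win at the very first stage of the weak coarse decomposition game for $\{X\}$, so that no iteration of the game is actually needed. First I would let $L_1$ be the entourage asserted by Player~1 on the opening move and apply the hypothesis $\as_C X \le n$ with the entourage $E = L_1$. This yields families $\sU^0, \sU^1, \ldots, \sU^n$ of subsets of $X$ whose union $\sU := \bigcup_{i=0}^{n}\sU^i$ covers $X$, with each $\sU^i$ uniformly bounded and each $\sU^i$ being $L_1$-disjoint.

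Next I would repackage these families as a single weak decomposition. Set $X^i = \bigcup_{U \in \sU^{i-1}} U$ for $i = 1,\ldots,n+1$. Since $\sU$ covers $X$ we have $X = X^1 \cup X^2 \cup \cdots \cup X^{n+1}$; and writing $\sU^{i-1} = \{U^{i-1}_j\}_j$, the $L_1$-disjointness of $\sU^{i-1}$ says precisely that $X^i = \bigsqcup_{L_1} U^{i-1}_j$ with each $U^{i-1}_j$ a member of the family $\sY_1 := \sU$. Thus $\{X\}$ admits a weak $(L_1,n+1)$-decomposition over $\sY_1$, and this is the move Player~2 makes in response to $L_1$.

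The only remaining point---and the only verification of any substance---is that $\sY_1$ is uniformly bounded, which ends the game in Player~2's favour; this is immediate, since $\Delta_{\sY_1} = \bigcup_{i=0}^{n} \Delta_{\sU^i}$ is a finite union of entourages and hence an entourage. I expect the ``obstacle'' here to be conceptual rather than technical: one must notice that the game need not be played past its first stage, because the defining property of $\as_C X \le n$ already hands us a single decomposition over a uniformly bounded family. The only things to be careful about are that in a weak $(L,d)$-decomposition the number of pieces $d = n+1$ is permitted to depend on the asserted entourage (here it does not even do that), and that a finite union of uniformly bounded families is again uniformly bounded.
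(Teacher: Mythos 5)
Your proposal is correct and follows essentially the same route as the paper: apply the defining property of $\as_C X \le n$ to Player~1's first entourage, repackage the $n+1$ uniformly bounded, $L_1$-disjoint families as a weak $(L_1,n+1)$-decomposition of $X$ over their union $\sU$, and note that Player~2 thereby wins at the first stage. Your added check that $\Delta_{\sU}=\bigcup_{i=0}^{n}\Delta_{\sU^i}$ is an entourage (a finite union of entourages) is a detail the paper leaves implicit, and it is correct.
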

\begin{proof}
Given an entourage $L\in \sE$, there exists a $L$-disjoint uniformly bounded families $\sU^0, \cdots, \sU^n$ whose union $\sU$ covers $X$. These families yield a weak $(L,n+1)$-decomposition of $X$ over $\sU$ and therefore the second player can win on the first phase of the weak coarse finite decomposition game.
\end{proof}

In the metric case, asymptotic property C implies straight finite decomposition complexity, \cite{DZ}. Translating these notions to the coarse setting yields the following results.

\begin{proposition}\label{cPCcsFDC}
Let $(X,\sE)$ be a coarse space with coarse property C.  Then $(X,\mathcal{E})$ has straight finite coarse decomposition complexity.
\end{proposition}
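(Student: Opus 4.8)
The plan is to play the straight decomposition game directly using the families produced by coarse property C. Given a sequence of controlled sets $L_1\subset L_2\subset\cdots$, the key point is that a single $L_i$-decomposition step turns an $L_i$-disjoint uniformly bounded family into a uniformly bounded family (trivially, since each piece is already $L$-disjoint from itself), so the challenge is not to decompose a complicated family but rather to absorb the covering families coming from coarse property C one scale at a time. Concretely, I would first replace the given sequence $L_i$ by a possibly larger sequence $\tilde L_i$ that is symmetric and contains $\Delta_X$ (enlarging $L_i$ only makes the disjointness requirement harder, so this is harmless), and then apply the definition of coarse property C to obtain an $n$ and families $\sU^1,\ldots,\sU^n$, each $\sU^i$ uniformly bounded and $\tilde L_i$-disjoint, with $\sU=\bigcup_{i=1}^n\sU^i$ covering $X$.

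Next I would organize these into a straight decomposition of $\{X\}$. Set $\sY_0=\{X\}$, and for $i=1,\ldots,n$ let $\sY_i$ be the family consisting of the members of $\sU^1\cup\cdots\cup\sU^i$ together with a single ``leftover'' set $Z_i=X\setminus\bigcup(\sU^1\cup\cdots\cup\sU^i)$. Then $\sY_{i-1}\xrightarrow{L_i}\sY_i$: a member of $\sU^1\cup\cdots\cup\sU^{i-1}$ decomposes trivially (put it in $X^0$, a single $L_i$-disjoint piece consisting of itself), and the leftover set $Z_{i-1}$ decomposes as $Z_{i-1}=(Z_{i-1}\cap\bigcup\sU^i)\cup (Z_{i-1}\setminus\bigcup\sU^i)$, where the first piece is an $L_i$-disjoint union of (subsets of) members of $\sU^i$ — using that $\sU^i$ is $L_i$-disjoint and that coarse property C (hence $L_i$-disjointness, restriction to subsets) is inherited — and the second piece is exactly $Z_i$. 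Since $\sU$ covers $X$, after step $n$ the leftover set $Z_n$ is empty, so $\sY_n\subseteq\sU^1\cup\cdots\cup\sU^n$, which is a finite union of uniformly bounded families and hence uniformly bounded. (One should note that a finite union of uniformly bounded families is uniformly bounded: if each $\Delta_{\sU^i}\in\sE$, then $\bigcup_{i=1}^n\Delta_{\sU^i}\in\sE$ by the union axiom for coarse structures.)

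The main obstacle, and the only place requiring genuine care, is the $L_i$-decomposition step for the leftover sets: one must check that intersecting the members of $\sU^i$ with $Z_{i-1}$ (i.e. throwing away points already covered at earlier scales) still yields an $L_i$-disjoint family, which follows because passing to subsets of an $L_i$-disjoint family keeps it $L_i$-disjoint, and that the remaining points form precisely the set $Z_i$ to be handled at the next stage. Everything else is bookkeeping: the definition of $\xrightarrow{L_i}$ allows each set to be split into at most two $L_i$-disjoint pieces, and here we only ever use one piece for old sets and two for the leftover, so the format is respected. Since the sequence $L_i$ was arbitrary and $n$ is finite with $\sY_n$ uniformly bounded, this exhibits the required data, so $X$ has straight finite coarse decomposition complexity.
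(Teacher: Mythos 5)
Your proposal is correct and follows essentially the same route as the paper's own proof: the paper also sets $\sY_j=\{\bigcap_{i=1}^j(X\setminus\bigcup_{U\in\sU^i}U)\}\cup\sU^1\cup\cdots\cup\sU^j$, decomposes previously produced sets trivially and splits the leftover set at scale $L_j$ into the part met by $\sU^j$ and the next leftover, ending with a uniformly bounded family once the cover is exhausted. Your extra care about intersecting members of $\sU^i$ with the leftover set (and that subsets preserve $L_i$-disjointness) only tightens a point the paper states loosely, so no substantive difference.
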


\begin{proof}
Let  $L_1\subset L_2\subset\cdots$ be a sequence of entourages. There exist families $\sU^1, \cdots, \sU^n$ such that $\sU = \cup_{i=1}^n\sU^i$ covers X and each $\sU^i$ is uniformly bounded and $L_i$-disjoint.

We define $\sY_1= \{X\setminus\cup_{U\in\sU^1}U\} \bigcup \sU^1$.  Then, with $X^0=X\setminus\cup_{U\in\sU^1}U$ and $X^1=\cup_{U\in \sU^1}U$ we see that $X$ admits an $L_1$-decomposition over $\sY_1$.

For $j>1$, define $\sY_j=\{\bigcap_{i=1}^jX\setminus\cup_{U\in\sU^i}U\}\cup \sU^1\cup\sU^2\cup\cdots\cup\sU^j$. It is clear that in this way, $\sU^n$ will be uniformly bounded. It remains to show only that for each $Z\in\sY_{j-1}$ (for $j=2,3,\cdots,n$), $Z$ admits an $L_j$ decomposition over $\sY_j$. To this end, suppose we are given a set $Z\in\sY_{j-1}$. If $Z$ is equal to some element $U$ in a family $\sU^k$, with $1\le k\le j-1$, then this $Z$ is also in $\sY_j$ and so it is trivially $L_j$-decomposable over $\sY_j$. Otherwise, $Z=\bigcap_{i=1}^{j-1}X\setminus\cup_{U\in\sU^i}U$. This set can be written as $X^0\cup X^1$, where $X^0=\bigcap_{i=1}^jX\setminus\cup_{U\in\sU^i}U$ and $X^1=\cup_{U\in \sU^j}U$ and this union is an $L_j$-disjoint union.
\end{proof}

\begin{corollary} Let $(X,\sE)$ be a coarse space such that $\as_CX \le n$.  Then $(X,\mathcal{E})$ has straight finite coarse decomposition complexity. \qed
\end{corollary}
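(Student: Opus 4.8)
The plan is to chain together two facts already established in the excerpt, so that no new machinery is required. First, note that the hypothesis $\as_C X \le n$ in particular gives $\as_C X < \infty$. By the remark immediately following Definition~\ref{cpC}, any coarse space with $\as_C X < \infty$ has coarse property C: given a nested sequence $L_1 \subset L_2 \subset \cdots$ of entourages, one applies the definition of $\as_C X \le n$ to the single entourage $L_{n+1}$ to obtain families $\sU^0,\sU^1,\dots,\sU^n$ that are each uniformly bounded and each $L_{n+1}$-disjoint (hence each $L_i$-disjoint for $i \le n+1$, since the $L_i$ are nested), whose union covers $X$; reindexing these as $\sU^1,\dots,\sU^{n+1}$ and using that $\sU^i$ is in particular $L_i$-disjoint verifies the three conditions of Definition~\ref{cpC}.

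Having coarse property C in hand, the result is immediate from Proposition~\ref{cPCcsFDC}, which states precisely that a coarse space with coarse property C has straight finite coarse decomposition complexity. Concretely, given $L_1 \subset L_2 \subset \cdots$, the proof of that proposition produces, from the families $\sU^1,\dots,\sU^n$ witnessing coarse property C, the telescoping sequence $\sY_j = \{\bigcap_{i=1}^j (X \setminus \bigcup_{U \in \sU^i} U)\} \cup \sU^1 \cup \cdots \cup \sU^j$, and shows $\sY_{j-1} \xrightarrow{L_j} \sY_j$ with $\sY_n$ uniformly bounded. Applying this with the families coming from $\as_C X \le n$ completes the argument.

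There is essentially no obstacle here: the statement is a formal consequence of the implications $\as_C X < \infty \Rightarrow$ coarse property C $\Rightarrow$ straight finite coarse decomposition complexity. The only point requiring a word of care is the bookkeeping in the first implication — making sure that a decomposition into $n+1$ families that are all disjoint at the single scale $L_{n+1}$ can be repackaged as a finite sequence of families disjoint at the successively increasing scales $L_1,\dots,L_{n+1}$ — but this is exactly the trivial direction noted after Definition~\ref{cpC} and uses only that the $L_i$ form a nested sequence.
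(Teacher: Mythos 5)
Your proof is correct and is exactly the paper's intended argument: the corollary is stated with an immediate \qed because it follows from the remark after Definition~\ref{cpC} (finite coarse asymptotic dimension implies coarse property C) combined with Proposition~\ref{cPCcsFDC}, which is precisely the chain you use. Your extra bookkeeping, applying the $\as_C X\le n$ definition to $L_{n+1}$ and using nestedness of the $L_i$, correctly fills in the easy first implication.
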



\begin{figure}\begin{center}\includegraphics{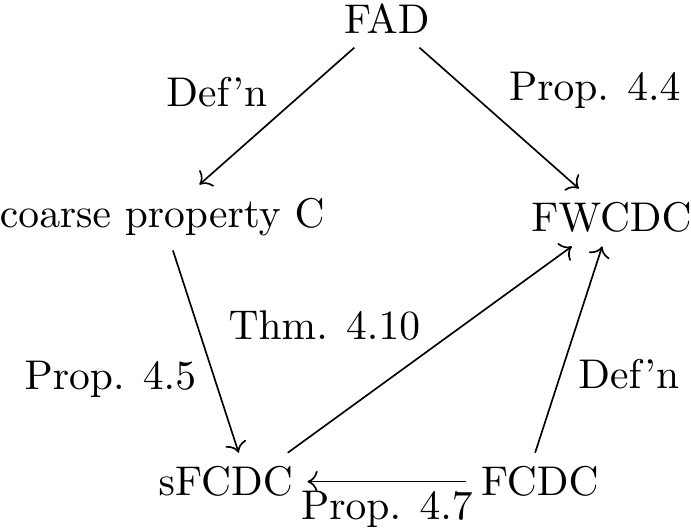}
\end{center}
\caption{A diagram indicating the implications among the coarse properties considered in this paper with references to the proofs. Here FAD means finite asymptotic dimension, sFCDC means straight finite coarse decomposition complexity, FCDC means finite coarse decomposition complexity, and FWCDC means finite weak coarse decomposition complexity.}
\end{figure}

\begin{proposition}\label{cFDCcsFDC}
Let $(X,\sE)$ be a coarse space such that $X$ has finite coarse decomposition complexity.  Then $(X,\mathcal{E})$ has straight finite coarse decomposition complexity.
\end{proposition}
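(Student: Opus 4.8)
The plan is to show that a winning strategy for Player~2 in the (full) coarse decomposition game can be converted into the finite sequence of families required by straight finite coarse decomposition complexity, for any prescribed sequence $L_1\subset L_2\subset\cdots$ of entourages. The key difference between the two games is that in the sFCDC version all of Player~1's challenges are revealed in advance, whereas in the FCDC game they are revealed one at a time; but since a winning strategy must respond correctly to \emph{every} sequence of challenges, in particular it responds correctly to the prescribed one. So the proof is essentially an unwinding of definitions.

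First I would fix a winning strategy $\sigma$ for Player~2 in the coarse decomposition game for $\{X\}$. Given $L_1\subset L_2\subset\cdots$, I would run the game in which Player~1 plays exactly $L_1$, then $L_2$, then $L_3$, and so on. Because $\sigma$ is a winning strategy, the resulting play terminates after finitely many rounds, say $n$ rounds: it produces families $\sY_1,\sY_2,\ldots,\sY_n$ with $\sY_{i-1}\xrightarrow{L_i}\sY_i$ (here $\sY_0=\{X\}$) in the sense that every member of $\sY_{i-1}$ admits an $L_i$-decomposition over $\sY_i$, and with $\sY_n$ uniformly bounded (that is precisely what it means for Player~2 to have won at stage $n$). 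One small bookkeeping point: the definition of straight finite coarse decomposition complexity asks for an $L_i$-decomposition of the \emph{family} $\sY_{i-1}$ over $\sY_i$, and the definition of $L$-decomposition of a family (given earlier, right before the decomposition game is introduced) is exactly that each member admits such a decomposition; so the objects produced by the game are literally the objects required. Another minor point: the prescribed sequence may be strictly increasing while a given run of the strategy might, a priori, be responding to whatever Player~1 types — but since Player~1 is free to type $L_1\subset L_2\subset\cdots$, this is not an issue; if one wishes one may also note that playing a \emph{larger} entourage only makes the $L_i$-disjointness requirement harder, so monotonicity costs nothing.

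Thus I would conclude: the finite sequence $\sY_1,\ldots,\sY_n$ obtained from this particular play witnesses $\sY_{i-1}\xrightarrow{L_i}\sY_i$ for $i=1,\ldots,n$ with $\sY_n$ uniformly bounded, which is exactly the condition in the definition of straight finite coarse decomposition complexity. Since $L_1\subset L_2\subset\cdots$ was arbitrary, $X$ has straight finite coarse decomposition complexity.

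The main (and really only) obstacle is a definitional one: one must check that the notion of ``$L$-decomposition of a family $\sY_{i-1}$ over $\sY_i$'' used in the definition of sFCDC agrees with the notion of a legal move for Player~2 in the coarse decomposition game, and that ``Player~2 wins'' is the same as ``the last family is uniformly bounded.'' Both hold verbatim from the definitions in this section, so once those identifications are made the proof is immediate; there is no geometry to do here, only the observation that a strategy correct against all challenge sequences is in particular correct against the given increasing one.
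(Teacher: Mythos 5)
Your proposal is correct and matches the paper's own argument: the paper likewise has Player~1 play the given sequence $L_1\subset L_2\subset\cdots$ move by move and extracts the finite sequence $\sY_1,\ldots,\sY_n$ with $\sY_{i-1}\xrightarrow{L_i}\sY_i$ and $\sY_n$ bounded directly from Player~2's winning strategy. Your extra remarks on the definitional identifications are fine but add nothing beyond what the paper's two-sentence proof already uses.
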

\begin{proof}
Given a sequence of entourages $L_1\subset L_2\subset\cdots$, we can play the decomposition game where Player 1 always gives the next entourage in the sequence on their turn. Since $X$ has finite coarse decomposition complexity, Player 2 has a winning strategy, which will provide a finite sequence $\sY_1,\sY_2,\ldots,\sY_n$ so that $\sY_{i-1}\xrightarrow{L_i}\sY_i$ with $\sY_n$ bounded.
\end{proof}

The next lemma allows us a measure of control on composing weak decompositions.

\begin{lemma}\label{Lemma4.8}
Let $\sX,\sY,$ and $\sZ$ be families of subspaces of a coarse space $(W,\sE)$. Let $L_1$ and $L_2$ be entourages. Then if $\sX$ admits a weak $(L_1,d_1)$-decomposition over $\sY$ and $\sY$ admits a weak $(L_2,d_2)$-decomposition of $\sZ$, then $\sX$ admits a weak $(L_1\cap L_2,d_1\cdot d_2)$-decomposition over $\sZ$.  
\end{lemma}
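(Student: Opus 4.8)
The plan is to compose the two weak decompositions directly, tracking how the pieces refine one another and verifying the two requisite properties ($L_1 \cap L_2$-disjointness of each refined layer, and membership of the refined pieces in $\sZ$). Fix $X \in \sX$. By hypothesis, $X = X^1 \cup \cdots \cup X^{d_1}$ where each $X^i = \bigsqcup_{L_1} X^i_j$ with $X^i_j \in \sY$. Now apply the weak $(L_2,d_2)$-decomposition to the family $\sY$: each piece $X^i_j$, being an element of $\sY$, decomposes as $X^i_j = \bigcup_{k=1}^{d_2} (X^i_j)^k$ where $(X^i_j)^k = \bigsqcup_{L_2} Z$ for sets $Z \in \sZ$.

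Next I would assemble the product-indexed layers: for each pair $(i,k)$ with $1 \le i \le d_1$, $1 \le k \le d_2$, set
\[
  W^{i,k} = \bigcup_j (X^i_j)^k.
\]
There are $d_1 \cdot d_2$ such sets, and their union over all $(i,k)$ is $\bigcup_i \bigcup_j X^i_j = \bigcup_i X^i = X$, so they cover $X$ as required. It remains to exhibit, for each fixed $(i,k)$, a presentation of $W^{i,k}$ as an $(L_1 \cap L_2)$-disjoint union of members of $\sZ$. The constituent sets are the pieces $Z$ appearing in the $L_2$-disjoint decomposition of $(X^i_j)^k$, ranging over all $j$; each such $Z$ lies in $\sZ$, so that part is immediate.

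The heart of the argument — and the step I expect to be the only real obstacle — is verifying the disjointness. Take two distinct constituent sets $Z, Z'$ of $W^{i,k}$. If they come from the same index $j$, then $Z, Z'$ are distinct pieces in the $L_2$-disjoint decomposition of $(X^i_j)^k$, so $Z \times Z' \cap L_2 = \emptyset$, hence $Z \times Z' \cap (L_1 \cap L_2) = \emptyset$. If they come from different indices $j \neq j'$, then $Z \subset X^i_j$ and $Z' \subset X^i_{j'}$ with $X^i_j \neq X^i_{j'}$ distinct pieces in the $L_1$-disjoint decomposition of $X^i$; therefore $Z \times Z' \subset X^i_j \times X^i_{j'}$ and $X^i_j \times X^i_{j'} \cap L_1 = \emptyset$, so $Z \times Z' \cap (L_1 \cap L_2) = \emptyset$. (One should note the mild subtlety that a single set could in principle arise as a constituent for two different values of $j$; if that causes an indexing ambiguity one simply takes the $Z$'s with multiplicity, i.e. indexes the union by pairs $(j, \text{piece})$, which does not affect disjointness since we only ever compare genuinely distinct subsets of $X$.) In every case $Z \times Z' \cap (L_1 \cap L_2) = \emptyset$, so $W^{i,k} = \bigsqcup_{L_1 \cap L_2} Z$, completing the weak $(L_1 \cap L_2, d_1 d_2)$-decomposition of $X$ over $\sZ$. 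Since $X \in \sX$ was arbitrary, $\sX$ admits such a decomposition over $\sZ$.
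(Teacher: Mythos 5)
Your proposal is correct and follows essentially the same route as the paper's own proof: refine each piece of the weak $(L_1,d_1)$-decomposition by the weak $(L_2,d_2)$-decomposition, group the resulting pieces into $d_1\cdot d_2$ layers indexed by pairs $(i,k)$, and check $(L_1\cap L_2)$-disjointness by the same two cases (same $j$ handled by $L_2$-disjointness, different $j$ by $L_1$-disjointness). Your parenthetical remark about possible repetition of a set across different indices $j$ is a minor point the paper leaves implicit; otherwise the arguments coincide.
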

\begin{proof} 
Although the notation is cumbersome, the idea is very straightforward as we just decompose each set $X\in \sX$ over $\sY$ and then over $\sZ$, then combine these decompositions.

Given any $X \in \sX$, we can write $X=X^1\cup \ldots \cup X^{d_1}$ where $X^i=\bigsqcup_{L_1}Y^i_{j}$, each $Y^i_{j}\in\sY$ and whenever $j\neq j'$ we have that $(Y^i_{j}\times Y^i_{j'})\cap L_1 = \emptyset$. Since $Y^i_{j}\in\sY$, for each $i$ and $j$ we can write $Y^i_{j}=Y^{i,1}_{j}\cup\ldots \cup Y^{i,d_2}_{j}$ where $Y^{i,k}_{j}=\bigsqcup_{L_2}Z^{i,k}_{j,\ell}$, each $Z^{i,k}_{j,\ell}\in\sZ$ and whenever $\ell\neq \ell'$ then we have that $(Z^{i,k}_{j,\ell}\times Z^{i,k}_{j,\ell'})\cap L_2 = \emptyset$. 

Therefore, we can write $X = \cup_{i=0}^{d_1} \cup_{k=1}^{d_2} \bigcup_{j,\ell} Z^{i,k}_{j,\ell}$. We will proceed by cases to show that the collection $\{Z^{i,k}_{j,\ell}\}_{j,\ell}$ is $L_1\cap L_2$-disjoint. We take two distinct elements of that collection: $Z^{i,k}_{j,\ell}$ and $Z^{i,k}_{j',\ell'}$.

Case 1: $j\neq j'$. In this case, we have that $Z^{i,k}_{j,\ell}\subset Y^{i}_j$ and $Z^{i,k}_{j',\ell'}\subset Y_{j'}^{i}$ and so therefore $(Z^{i,k}_{j,\ell}\times Z^{i,k}_{j',\ell'})\cap L_1 = \emptyset$. So, in particular $(Z^{i,k}_{j,\ell}\times Z^{i,k}_{j',\ell'})\cap (L_1\cap L_2) = \emptyset$.

Case 2: $j=j'$. Since these elements are distinct, we must have that $\ell\neq \ell'$. Thus $(Z^{i,k}_{j,\ell}\times Z^{i,k}_{j,\ell'})\cap L_2 = \emptyset$ and therefore $(Z^{i,k}_{j,\ell}\times Z^{i,k}_{j',\ell'})\cap (L_1\cap L_2) = \emptyset$.
\end{proof}

 As a consequence, we can simplify the definition of finite weak coarse decomposition complexity.

\begin{proposition}
Let $\sX$ be a family of coarse subspaces of a coarse space $(X,\sE)$. If $\sX$ has finite weak coarse decomposition complexity then the weak decomposition game can be won by Player 2 on the first turn.
\end{proposition}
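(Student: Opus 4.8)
The plan is to run the weak decomposition game for $\sX$ once, using a winning strategy for Player 2, but to have Player 1 choose a cleverly descending sequence of entourages so that when the strategy eventually terminates at some finite stage $n$, the composed decomposition is itself a single weak decomposition of $\sX$ with respect to the entourage Player 1 actually wanted. Concretely, suppose Player 1 wishes to win on the first turn against a challenge entourage $L \in \sE$. We cannot know in advance how long Player 2's strategy will take, but the game tree is such that against \emph{any} fixed sequence of Player 1 moves the strategy terminates at a finite stage; so first I would fix the strategy and then feed it the constant sequence $L_1 = L_2 = L_3 = \cdots = L$. Let $n$ be the (finite) stage at which Player 2 wins, producing families $\sY_1, \sY_2, \ldots, \sY_n$ with $\sX$ admitting a weak $(L, d_1)$-decomposition over $\sY_1$, each $\sY_{i-1}$ admitting a weak $(L, d_i)$-decomposition over $\sY_i$, and $\sY_n$ uniformly bounded.

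The engine is Lemma~\ref{Lemma4.8}: applying it inductively, $\sX$ admits a weak $(L \cap L \cap \cdots \cap L, d_1 d_2 \cdots d_n)$-decomposition over $\sY_n$, and since $L \cap L \cap \cdots \cap L = L$, this is exactly a weak $(L, D)$-decomposition of $\sX$ over the uniformly bounded family $\sY_n$, where $D = \prod_{i=1}^n d_i$. That is precisely the statement that Player 2 can win the weak coarse decomposition game on the first turn: given the challenge $L$, Player 2 responds with the family $\sY_n$ and the composed weak $(L, D)$-decomposition, which lands in a uniformly bounded family immediately. One subtlety I would flag: the definition of the weak decomposition game phrases Player 1's challenge as a single entourage $L$ (not a number $d$), and Player 2's winning condition is reaching a uniformly bounded family; the integer $D$ arising from the composition is not a constraint imposed by Player 1, so it causes no trouble — but I should state explicitly that ``winning on the first turn'' means the family produced in response to the first challenge is uniformly bounded, which is what we have shown.

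The main obstacle is a bookkeeping/quantifier point rather than a deep one: Player 2's winning strategy is a strategy, so a priori the stage $n$ and the families $\sY_i$ depend on the \emph{entire history} of Player 1's moves, and in general a strategy need not terminate in a bounded number of rounds independently of the opponent. Here this is not actually a problem because we only need termination against the one specific play where Player 1 repeats $L$ forever — a strategy is winning means it wins against every such play, so in particular against this one, hence $n$ is finite. I would make sure to phrase this carefully: we do not claim a uniform bound on $n$ over all $L$, only that for each fixed $L$ the constant-$L$ play terminates, which suffices since the conclusion ``Player 2 wins on the first turn'' is itself quantified over challenges $L$. Everything else — the repeated application of Lemma~\ref{Lemma4.8} and the identity $\bigcap^n L = L$ — is routine.
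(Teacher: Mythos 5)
Your proposal is correct and follows essentially the same route as the paper: respond to the single challenge $L$ by running Player 2's winning strategy against the constant sequence $L, L, L, \ldots$, then compose the resulting chain of weak decompositions via Lemma~\ref{Lemma4.8} (using $L \cap L = L$) into one weak $(L, d_1\cdots d_n)$-decomposition over the final uniformly bounded family. Your explicit remarks about the stage $n$ depending on $L$ and on the constant play being a legitimate play against the strategy are careful clarifications of points the paper leaves implicit, but the argument is the same.
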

\begin{proof}
Suppose that Player 1 asserts an entourage $L_1$. Instead of responding with a single family $\sY_1$ and an $(L_1,d_1)$-decomposition of $X$ over the family $\sY_1$, Player 2 plays the decomposition game on her own. She again takes the entourage $L_1$ and a family $\sY_2$ so that there is an $(L_1, d_2)$-decomposition of $\sY_2$ over $\sY_1$. She continues in this way. Because the family $\sX$ is assumed to have finite weak coarse decomposition complexity, this process must stop at some finite point at which Player 2 has found an $(L_1,d_k)$-decomposition of $\sY_{k-1}$ over $\sY_k$, with $\sY_k$ uniformly bounded. By Lemma \ref{Lemma4.8}, Player 2 can respond to the initial challenge with an $(L_1, d_1\cdot\cdots \cdot d_k)$-decomposition of $\sX$ over the uniformly bounded family $\sY_k$ and therefore, Player 2 can win on turn one. 
\end{proof} 

\begin{theorem}\label{csFDCcwFDC}
Let $(X,\sE)$ be a coarse space such that $X$ has straight finite coarse decomposition complexity.  Then $(X,\mathcal{E})$ has finite weak coarse decomposition complexity.
\end{theorem}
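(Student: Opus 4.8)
The plan is to feed a \emph{constant} sequence of entourages into the definition of straight finite coarse decomposition complexity and then collapse the resulting chain of one-step decompositions into a single weak decomposition by iterating the composition Lemma~\ref{Lemma4.8}.

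First I would fix an arbitrary entourage $L \in \sE$, thought of as Player~1's opening move in the weak coarse decomposition game, and apply the hypothesis to the (weakly increasing) constant sequence $L \subseteq L \subseteq \cdots$. This yields an integer $n$ and families $\sY_1,\dots,\sY_n$ of coarse subspaces of $X$ such that, writing $\sY_0 = \{X\}$, we have $\sY_{i-1} \xrightarrow{L} \sY_i$ for $i = 1,\dots,n$ and $\sY_n$ is uniformly bounded.

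The key observation is that each relation $\sY_{i-1} \xrightarrow{L} \sY_i$ is, verbatim, the assertion that $\sY_{i-1}$ admits a weak $(L,2)$-decomposition over $\sY_i$: every $U \in \sY_{i-1}$ splits as $U = U^0 \cup U^1$ with each of $U^0, U^1$ an $L$-disjoint union of members of $\sY_i$. With this identification in hand I would apply Lemma~\ref{Lemma4.8} $n-1$ times, at each stage with both entourages equal to $L$ so that the intersection $L \cap L = L$ does not degrade: composing the weak $(L,2)$-decomposition of $\sY_0$ over $\sY_1$ with that of $\sY_1$ over $\sY_2$ gives a weak $(L,4)$-decomposition of $\sY_0$ over $\sY_2$, and after all $n-1$ compositions $\sY_0 = \{X\}$ admits a weak $(L, 2^{\,n})$-decomposition over the uniformly bounded family $\sY_n$. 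This is a legal winning response for Player~2 to the move $L$, so Player~2 wins on the first turn; since $L$ was arbitrary, this is a winning strategy for Player~2 and $X$ has finite weak coarse decomposition complexity.

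I do not anticipate a genuine obstacle: the substance is entirely contained in Lemma~\ref{Lemma4.8}, and the only points requiring care are the bookkeeping identification of the one-step relation $\xrightarrow{L}$ with a weak $(L,2)$-decomposition and the observation that feeding a constant sequence keeps the challenge entourage $L$ unchanged under the intersections produced by iterated composition, so that the final decomposition is genuinely over $L$ and not over some smaller entourage. One could also route the argument through the earlier proposition equating finite weak coarse decomposition complexity with Player~2 winning on turn one, but only its trivial direction — winning immediately is a winning strategy — is needed here.
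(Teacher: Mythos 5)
Your proposal is correct and follows essentially the same route as the paper: feed the constant sequence $L\subseteq L\subseteq\cdots$ into straight finite coarse decomposition complexity, view each step $\sY_{i-1}\xrightarrow{L}\sY_i$ as a weak $(L,2)$-decomposition, and iterate Lemma~\ref{Lemma4.8} to obtain a weak $(L,2^n)$-decomposition of $X$ over the bounded family $\sY_n$. The paper's proof is just a terser version of the same argument (its ``by the above'' is exactly your iteration of Lemma~\ref{Lemma4.8}), and your remark that $L\cap L=L$ keeps the challenge entourage intact is the right bookkeeping point.
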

\begin{proof} 
Given $L\in \mathcal{E}$, let $L_1\subset L_2\subset\cdots$ be the sequence of controlled sets with $L_i=L$ for all $i$.  Then there exists some finite sequence $\sY_1,\sY_2,\ldots,\sY_n$ so that $\sY_{i-1}\xrightarrow{L}\sY_i$ with $\sY_n$ bounded. Then by the above, $X$ admits an $(L,2^n)$-decomposition over $\sY_n$ and $\sY_n$ is bounded. Therefore, $X$ has finite weak coarse decomposition complexity.
\end{proof}

\subsection{Permanence results}
We want to establish some basic permanence results for these coarse notions. We begin by showing that these properties are inherited by subspaces and are coarse invariants. These permanence results are often of the form: let $Y$ be some space obtained from $X$. If $X$ has the property $\mathcal{P}$ then $Y$ has the property $\mathcal{P}$ where $\mathcal{P}$ is finite weak coarse decomposition complexity, finite coarse decomposition complexity or straight finite coarse decomposition complexity. We keep the approach uniform by splitting each statement into three separate statements (one for each version of finite decomposition complexity).

We omit the obvious proof of the following result.

\begin{proposition}
If $Y\subset X$ and $Y$ has the coarse structure inherited from $X$, then
\begin{enumerate}
\item if $X$ has finite weak coarse decomposition complexity then $Y$ has finite weak coarse decomposition complexity;
\item if $X$ has finite coarse decomposition complexity then $Y$ has finite coarse decomposition complexity; and
\item if $X$ has straight finite coarse decomposition complexity then $Y$ has straight finite coarse decomposition complexity.
\end{enumerate}\qed
\end{proposition}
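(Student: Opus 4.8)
The plan is to prove all three statements at once by observing that a winning strategy for Player 2 on $X$ restricts to a winning strategy on $Y$, and analogously for the straight version. The key point is that every notion in play (an $L$-decomposition, a weak $(L,d)$-decomposition, the relation $\xrightarrow{L}$, uniform boundedness, $L$-disjointness) is defined purely in terms of the entourages of the ambient space restricted to the relevant subsets, and these restrict compatibly. In particular, if $\sY$ is a family of coarse subspaces of $X$ and $\sY|_Y := \{ Z \cap Y \mid Z \in \sY \}$ is its restriction to $Y$, then $\sY|_Y$ is a family of coarse subspaces of $Y$; if $\sY$ is uniformly bounded in $X$ (so $\bigcup_{Z \in \sY} Z \times Z \in \sE$) then $\bigcup_{W \in \sY|_Y} W \times W$ is contained in that entourage and hence lies in $\sE_Y = \sE \cap (Y \times Y)$, so $\sY|_Y$ is uniformly bounded in $Y$; and if $\sY$ is $L$-disjoint then $\sY|_Y$ is $(L \cap (Y\times Y))$-disjoint, in particular $L$-disjoint.

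First I would record the two elementary restriction lemmas above (restriction preserves uniform boundedness and $L$-disjointness), noting also that if $X = X^0 \cup X^1$ is an $L$-decomposition of a set $X \in \sX$ over $\sY$, then intersecting with $Y$ gives $X \cap Y = (X^0 \cap Y) \cup (X^1 \cap Y)$ with $X^i \cap Y = \bigsqcup_L (X^i_j \cap Y)$ an $L$-disjoint union of members of $\sY|_Y$; the same holds verbatim for weak $(L,d)$-decompositions. Consequently, whenever $\sX \xrightarrow{L} \sY$ in $X$ we get $\sX|_Y \xrightarrow{L} \sY|_Y$ in $Y$, and whenever $\sX$ admits a weak $(L,d)$-decomposition over $\sY$ in $X$, $\sX|_Y$ admits one over $\sY|_Y$ in $Y$.

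Next I would handle the three cases. For straight finite coarse decomposition complexity: given $L_1 \subset L_2 \subset \cdots$ in $\sE$, pick the same entourages (they are also entourages of $Y$ after intersecting with $Y \times Y$, but $L_i$ itself works since $L_i$-disjointness of subsets of $Y$ only depends on $L_i \cap (Y \times Y)$); applying sFCDC of $X$ to $L_1 \subset L_2 \subset \cdots$ yields $\sY_1, \ldots, \sY_n$ with $\sY_0 = \{X\}$, $\sY_{i-1} \xrightarrow{L_i} \sY_i$, and $\sY_n$ uniformly bounded; then $\sY_i|_Y$ witnesses sFCDC of $Y$, with $\sY_0|_Y = \{Y\}$. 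For finite coarse decomposition complexity and finite weak coarse decomposition complexity, I would argue via the game: Player 2's winning strategy on $X$ induces a winning strategy on $Y$ — when Player 1 on $Y$ asserts $L_i$, Player 2 feeds $L_i$ to the $X$-strategy, obtains a family $\sZ_i$ and an $L_i$-decomposition (resp. weak $(L_i,d_i)$-decomposition) of the current $X$-family over $\sZ_i$, and responds on $Y$ with $\sZ_i|_Y$; by the restriction lemmas this is a legal $Y$-move, and when the $X$-strategy terminates with a uniformly bounded family, its restriction is uniformly bounded, so Player 2 wins on $Y$.

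I do not expect any genuine obstacle here — the statement is flagged as "the obvious proof" in the excerpt — but the one point that deserves a sentence of care is the bookkeeping that Player 2's induced strategy on $Y$ genuinely only depends on the transcript of $Y$-moves so far (it does, since the families $\sZ_i$ produced by the $X$-strategy are determined by the challenges $L_1, \ldots, L_i$, which are exactly the $Y$-challenges), and that when one of the decomposed sets happens to be empty after intersecting with $Y$ the decomposition statements hold vacuously. So the proof is a matter of checking that the four defining conditions are each stable under intersection with $Y$, which the restriction lemmas accomplish.
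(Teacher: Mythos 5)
Your proof is correct and is exactly the argument the paper has in mind: the paper omits the proof as ``obvious,'' and the intended argument is precisely your observation that intersecting with $Y$ preserves uniform boundedness, $L$-disjointness, and hence (weak) $L$-decompositions, so any strategy or decomposition sequence for $X$ restricts to one for $Y$. No gaps; the bookkeeping points you flag (the induced strategy depending only on the $Y$-challenges, and empty intersections being harmless) are handled correctly.
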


%

In a recent article, Dydak and Virk investigate the extent to which properties such as straight finite decomposition complexity and property C are preserved by certain classes of functions \cite{DV}. In the current paper we did not investigate the extent to which their methods could be applied to these coarse properties. Instead, we content ourselves with considering a very strong assumption on a map between coarse spaces.

\begin{theorem}\label{FDC-CI} Let $(X,\sE)$ and $(Y,\sF)$ be coarsely equivalent coarse spaces. Then, 
\begin{enumerate}
\item  $X$ has finite weak coarse decomposition complexity if and only if $Y$ has finite weak coarse decomposition complexity;
\item $X$ has finite coarse decomposition complexity if and only if $Y$ has finite coarse decomposition complexity; and
\item  $X$ has straight finite coarse decomposition complexity if and only if $Y$ has straight finite coarse decomposition complexity.
\end{enumerate} 
\end{theorem}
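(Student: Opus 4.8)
The plan is to prove each of the three equivalences by the same mechanism, so I would first isolate the single core fact and then invoke it three times. Since $X$ and $Y$ are coarsely equivalent, it suffices (by symmetry of the hypothesis and the fact that each of the three properties passes to subspaces, already recorded in the preceding proposition) to show: if $f\colon X\to Y$ is a coarsely uniform embedding and $Y$ has property $\mathcal P$, then $X$ has property $\mathcal P$, where $\mathcal P$ is one of the three notions. As in the proof that coarse property C is a coarse invariant, I would replace $Y$ by the image $f(X)$ equipped with the restricted coarse structure $\sF$, so that $f$ becomes a bijection that is a coarse equivalence; then $f^{-1}$ is also coarse, and both $(f\times f)$ and $(f^{-1}\times f^{-1})$ carry entourages to entourages.

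The key technical step is a transport lemma for decompositions: if $\sY$ is a family of subspaces of $Y$ and $\sY$ admits an $L'$-decomposition (respectively a weak $(L',d)$-decomposition) over a family $\sZ$, then the pulled-back family $f^{-1}(\sY):=\{f^{-1}(B)\mid B\in\sY\}$ admits an $L$-decomposition (respectively a weak $(L,d)$-decomposition) over $f^{-1}(\sZ)$, where $L:=(f^{-1}\times f^{-1})(L')$ — or, going the other direction, with $L':=(f\times f)(L)$. The verification mirrors the coarse-invariance proof for property C almost verbatim: writing $B=B^0\cup B^1$ with $B^i=\bigsqcup_{L'}B^i_j$, one sets $f^{-1}(B)^i=\bigsqcup f^{-1}(B^i_j)$; the union is still a union, and $L$-disjointness follows because $\bigl(f^{-1}(B^i_j)\times f^{-1}(B^i_{j'})\bigr)\cap L\subset (f\times f)^{-1}\bigl((B^i_j\times B^i_{j'})\cap L'\bigr)=\emptyset$ when $L=(f\times f)(L)$'s preimage is controlled appropriately; one also needs that $f^{-1}$ of a uniformly bounded family is uniformly bounded, which is exactly $\Delta_{f^{-1}(\sY)}=(f\times f)^{-1}(\Delta_\sY)\in\sE$, valid because $f$ is a coarsely uniform embedding.

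With that lemma in hand the three cases are short. For \textbf{straight} finite coarse decomposition complexity: given $L_1\subset L_2\subset\cdots$ in $\sE$, set $L_i':=(f\times f)(L_i)$, which is an increasing sequence in $\sF$; apply sFCDC of $Y$ to get $\sY_0=\{Y\}\xrightarrow{L_i'}\sY_i$ with $\sY_n$ uniformly bounded; pull back to get $\{X\}=f^{-1}(\sY_0)\xrightarrow{L_i}f^{-1}(\sY_i)$ with $f^{-1}(\sY_n)$ uniformly bounded. For \textbf{finite coarse} decomposition complexity: Player 2's strategy for $X$ is to transport the challenge $L_i$ to $L_i'=(f\times f)(L_i)$, consult the winning strategy for $Y$ to produce $\sY_i$, and respond with $f^{-1}(\sY_i)$; since the game is finite and the terminating uniformly bounded family transports to a uniformly bounded family, Player 2 wins in $X$. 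For \textbf{finite weak coarse} decomposition complexity: the same strategy transport works, using the weak version of the transport lemma (the $d$'s are unchanged).

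The main obstacle I anticipate is purely bookkeeping rather than conceptual: one must be careful that the containments $(A\times B)\cap L\subset (f\times f)^{-1}((f\times f)(A\times B)\cap L')$ go in the direction needed, i.e. choosing $L'=(f\times f)(L)$ for the disjointness pull-back but needing $(f^{-1}\times f^{-1})$-controlledness to get uniform boundedness of pulled-back families — both are available precisely because $f$ is a coarsely uniform embedding onto its image (so $(f^{-1}\times f^{-1})(F)\in\sE$ for all $F\in\sF$), but the two uses pull in opposite directions and should be stated cleanly once in the transport lemma. A minor additional point is the reduction to the image: one should note, as in the earlier proof, that $f(X)\subset Y$ inherits each of the three properties by the preceding subspace proposition, so there is no loss in assuming $f$ surjective.
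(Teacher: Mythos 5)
Your proposal is correct and follows essentially the same route as the paper: transport each challenge $L_i$ forward to $K_i=(f\times f)(L_i)$, apply the property of $Y$, pull the resulting families back via $f^{-1}$, and verify $L_i$-disjointness by the preimage containment and uniform boundedness by the fact that a coarsely uniform embedding pulls entourages back to entourages. Packaging this as a separate "transport lemma" and reducing to the image $f(X)$ are only cosmetic differences from the paper's inline argument.
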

\begin{proof} 
(1) Let $f:X\to Y$ be a coarse equivalence and suppose that $Y$ has finite weak coarse decomposition complexity.   To construct  a winning strategy for the decomposition game for $X$, we will play a parallel game for $Y$ as follows:  For the first stage, Player 2 is given a controlled set $L_1$ and we take as our initial controlled set in the parallel game to be $K_1 = (f\times f)(L_1)$.  Then, since $Y$ has finite weak coarse decomposition complexity, we can find a family $\sY_1$ and a $(K_1,d_1)$-decomposition of $Y$ over $\sY_1$.  We claim that $X$ has an $(L_1,d_1)$-decomposition over the family $\sX_1 = \{(f\times f)^{-1}(B)\mid B\in \sY_1\}$.

We have \[Y=Y^1\cup \cdots \cup Y^{d_1} \qquad Y^i=\bigsqcup_{K_1}Y^i_{j}\] where each $Y^i_{j}\in\sY_1$ and the union is $K_1$-disjoint in the sense that $Y^i_{j}\neq Y^i_{j'}$ implies $Y^i_{j}\times Y^i_{j'}\cap K_1=\emptyset$. Then, with $X^i = (f\times f)^{-1}(Y^i)$ and $X^i_{j} = (f\times f)^{-1}(Y^i_{j})$, we see that \[X=X^1\cup \cdots \cup X^{d_1},\]
and each $X^i_{j}\in\sX_1$. Moreover since $X^i_{j}\neq X^i_{j'}$ implies $Y^i_{j}\neq Y^i_{j'}$ and $X^i_{j}\times X^i_{j'}\cap L_1\subset (f\times f)^{-1}(Y^i_{j}\times Y^i_{j'}\cap K_1) = (f\times f)^{-1}(\emptyset) = \emptyset$ we find that \[X^i=\bigsqcup_{L_1}X^i_{j}.\] Thus, $X$ has an $(L_1,d_1)$-decomposition over $\sX_1$.

On the $i$-th stage, Player 2 has found a family $\sX_{i-1}$ and is given a controlled set $L_i$.  We note that $\sY_{i-1} = \{(f\times f)(A)\mid A\in\sX_{i-1}\}$ by the previous construction and define $K_i = (f\times f)(L_i)$.  As before, we can find a family $\sY_i$ and a $(K_i,d_i)$-decomposition of $\sY_{i-1}$ over $\sY_i$, and again, it is true that $\sX_{i-1}$ has an $(L_i,d_i)$-decomposition over the family $\sX_i = \{(f\times f)^{-1}(B)\mid B\in \sY_i\}$.

Since $Y$ has finite weak coarse decomposition complexity, after a finite number of stages (say $k$), $\sY_k$ will be uniformly bounded, and therefore so will $\sX_k$ since $f$ is a coarse equivalence.  Therefore, Player 2 has a winning strategy for the weak coarse decomposition game for $X$, i.e. $X$ has finite weak coarse decomposition complexity.

(2) We note that since $d_i$ for $X$ in any given stage is the same as $d_i$ for $Y$, this also proves that finite coarse decomposition complexity is a coarse invariant, since in that case $d_i=2$ for all $i$.

(3) If $Y$ is assumed to have straight finite coarse decomposition complexity, then we begin with $L_1\subset L_2\subset\cdots$ a sequence of entourages in $\sE$.  Then since $f$ is bornologous, we have that $K_i = (f\times f)(L_i)$ is a sequence of entourages in $\sY$ such that $K_1\subset K_2\subset\cdots$.  Therefore, since $Y$ has straight finite coarse decomposition complexity, there exist $\sY_1, \sY_2\cdots, \sY_n$ such that $\sY_{i-1} \xrightarrow{K_i}\sY_i$ with $\sY_n$ bounded.

Let $\sX_i = \{(f\times f)^{-1}(B)\mid B\in\sY_i\}$.  As above, we use the fact that $f$ is a coarse equivalence to conclude that $\sX_{i-1} \xrightarrow{L_i}\sX_i$ and also that $X_n$ is bounded.  Therefore, $X$ has straight finite coarse decomposition complexity.
\end{proof}


We now wish to show that certain unions and products preserve these properties (cf.~\cite[Theorem 3.1.4]{GTY2} and \cite[Theorem 5.2]{BM}). Before proving our fibering theorem (Theorem \ref{fibering}), we observe that coarse decompositions are preserved by inverse images in the following sense.

\begin{remark}\label{remark4.13} Let $f:X\to Y$ be a coarse map between the coarse spaces $(X,\sE)$ and $(Y,\sF)$. Suppose $L\in\sE$ and put $K=(f\times f)(L)$. Then, since $f$ is bornologous, we see that $K\in\sF$. Let $\sV_1$ and $\sV_2$ be families of subsets of $Y$ for which there is a $K$-decomposition of $\sV_1$ over $\sV_2$. Then, with $\sU_i=\{f^{-1}(V)\mid V\in\sV_i\}$, it follows from basic set operations that $\sU_1$ admits an $L$-decomposition over $\sU_2$. 
\end{remark} 

\begin{theorem}\label{fibering}
Let $(X,\sE)$ and $(Y, \sF)$ be coarse spaces and suppose that $f:X\to Y$ is a coarse map. 
\begin{enumerate}
\item Suppose that for every uniformly bounded family $\sV$ of subsets of $Y$, the family $f^{-1}(\sV)$ and $Y$ itself have finite weak coarse decomposition complexity; then $X$ has finite weak coarse decomposition complexity.
\item Suppose that for every uniformly bounded family $\sV$ of subsets of $Y$, the family $f^{-1}(\sV)$ and $Y$ itself have finite coarse decomposition complexity; then $X$ has finite coarse decomposition complexity.
\item Suppose that for every uniformly bounded family $\sV$ of subsets of $Y$, the family $f^{-1}(\sV)$ and $Y$ itself have straight finite coarse decomposition complexity; then $X$ has straight finite coarse decomposition complexity.
\end{enumerate}
\end{theorem}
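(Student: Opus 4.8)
The plan is to prove the three parts of Theorem~\ref{fibering} in parallel, since the structure of the argument is the same in each case; the only thing that changes is how many pieces the decompositions split into and whether the challenges are revealed in advance. The key idea is the familiar ``fibering'' trick: a decomposition of $X$ over a uniformly bounded family can be obtained in two steps, first pushing down to $Y$, decomposing $Y$ until the family is uniformly bounded, then pulling back that bounded family to $X$ and decomposing the pulled-back pieces. Remark~\ref{remark4.13} is exactly the tool that makes the ``pushing down'' step work: a $K$-decomposition upstairs of $\sV_1$ over $\sV_2$ (with $K=(f\times f)(L)$) pulls back to an $L$-decomposition of $f^{-1}(\sV_1)$ over $f^{-1}(\sV_2)$.

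For part (1), the weak case, I would argue as follows. Player~1 asserts an entourage $L$. Set $K=(f\times f)(L)$, which is an entourage in $\sF$ since $f$ is bornologous. Since $Y$ has finite weak coarse decomposition complexity, Player~2 can (by the simplification proposition that lets her win on turn one) produce in one step an $(K,d)$-decomposition of $\{Y\}$ over a uniformly bounded family $\sV$ of subsets of $Y$. Now $f^{-1}(\sV)=\{f^{-1}(V)\mid V\in\sV\}$ is, by hypothesis, a family with finite weak coarse decomposition complexity; so, again winning on turn one, Player~2 can produce an $(L,d')$-decomposition of $f^{-1}(\sV)$ over a uniformly bounded family $\sW$ of subsets of $X$. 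Finally, observe that by Remark~\ref{remark4.13} the $(K,d)$-decomposition of $\{Y\}$ over $\sV$ pulls back to an $(L,d)$-decomposition of $\{X\}$ over $f^{-1}(\sV)$ (here one uses that $f^{-1}(Y)=X$). Composing this with the $(L,d')$-decomposition of $f^{-1}(\sV)$ over $\sW$ via Lemma~\ref{Lemma4.8} (with $L_1=L_2=L$, so $L_1\cap L_2=L$) yields an $(L,d\cdot d')$-decomposition of $\{X\}$ over the uniformly bounded family $\sW$. Hence Player~2 wins the weak coarse decomposition game for $X$ on turn one.

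For parts (2) and (3) the same scheme works with bookkeeping adjustments. In part (2), a single $L$-decomposition is a weak $(L,2)$-decomposition, and ``finite coarse decomposition complexity'' means Player~2 wins in finitely many rounds; so instead of one round I play the parallel game for $Y$ with constant challenge $K=(f\times f)(L)$ through to its uniformly bounded conclusion, getting a family $\sV$ on $Y$, then pull back, then run the game downstairs for $f^{-1}(\sV)$ with constant challenge $L$ to its conclusion. Iterated use of Remark~\ref{remark4.13} and Lemma~\ref{Lemma4.8} splices all these rounds into a single finite strategy for $X$. In part (3), the challenges are a fixed increasing sequence $L_1\subset L_2\subset\cdots$; set $K_i=(f\times f)(L_i)$, which is increasing in $\sF$, apply straight finite coarse decomposition complexity of $Y$ to get $\sY_0=\{Y\}\xrightarrow{K_1}\sY_1\xrightarrow{K_2}\cdots\xrightarrow{K_m}\sY_m$ with $\sY_m$ uniformly bounded, pull this chain back via Remark~\ref{remark4.13} to $f^{-1}(\sY_0)=\{X\}\xrightarrow{L_1}f^{-1}(\sY_1)\xrightarrow{L_2}\cdots\xrightarrow{L_m}f^{-1}(\sY_m)$, then apply straight finite coarse decomposition complexity of the uniformly bounded family $f^{-1}(\sY_m)$ to the tail sequence $L_{m+1}\subset L_{m+2}\subset\cdots$ to continue decomposing until the family is uniformly bounded. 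Concatenating the two chains gives the required straight decomposition of $\{X\}$.

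The main obstacle is not any one calculation but making sure the ``pullback of a uniformly bounded family is uniformly bounded'' and ``composition of decompositions'' steps interlock cleanly. The first point is subtle: for a general coarse map $f$, $f^{-1}$ of an entourage need not be an entourage, so one cannot claim that $f^{-1}(\sV)$ is uniformly bounded in $X$ merely because $\sV$ is uniformly bounded in $Y$ — and indeed in this theorem we do \emph{not} claim that; instead the hypothesis ``$f^{-1}(\sV)$ has (weak/ordinary/straight) finite coarse decomposition complexity'' is precisely what lets us further decompose $f^{-1}(\sV)$ down to something uniformly bounded in $X$. So the care required is to apply the hypothesis to $f^{-1}(\sV)$ as a family with the \emph{inherited} coarse structure and never to assume it is bounded. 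The second subtlety is that Lemma~\ref{Lemma4.8} is stated for weak decompositions with an intersection of entourages; using it with $L_1=L_2$ is exactly what we need in parts (1) and (2), while in part (3) we bypass it in favor of directly concatenating $\xrightarrow{L_i}$-chains, which is legitimate because the $L_i$ are increasing and the two sub-chains use disjoint index ranges of the sequence. Once these two points are set up correctly, the rest is routine set-theoretic verification via Remark~\ref{remark4.13}.
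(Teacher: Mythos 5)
Your parts (1) and (3) are essentially correct. Part (1) is a legitimate shortcut of the paper's route: you invoke the proposition that finite weak coarse decomposition complexity lets Player 2 win on the first turn, pull the one-step $(K,d)$-decomposition of $\{Y\}$ back through $f$ (the weak analogue of Remark~\ref{remark4.13}), and compose with a one-step decomposition of $f^{-1}(\sV)$ via Lemma~\ref{Lemma4.8}; that works. Part (3) is essentially the paper's argument: push the given increasing sequence forward by $f\times f$, take a straight chain for $\{Y\}$, pull it back, and continue with a straight chain for $f^{-1}(\sY_m)$ against the tail of the sequence.

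The gap is in part (2). Finite coarse decomposition complexity is defined by the \emph{adaptive} game: Player 1 chooses $L_2, L_3,\dots$ only after seeing Player 2's previous answers, and each answer must be a two-piece $L_i$-decomposition for the entourage actually asserted at that turn. Your strategy plays the parallel game on $Y$ (and then on $f^{-1}(\sV)$) against the \emph{constant} challenge $K=(f\times f)(L)$ (respectively $L$) determined by the first turn alone. The resulting chain of $L$-disjoint decompositions does not answer later challenges: if Player 1 asserts $L_2\supsetneq L_1$ at turn two, an $L_1$-disjoint decomposition of the current family need not be $L_2$-disjoint. Nor can you collapse the chain into one move with Lemma~\ref{Lemma4.8}: that yields a weak $(L,2^m)$-decomposition, which is a legal move only in the weak game, so as written your argument for (2) establishes finite \emph{weak} coarse decomposition complexity of $X$, not FCDC. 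The repair is the paper's: mirror Player 1's actual assertions, i.e.\ at stage $i$ translate the asserted $L_i$ to $K_i=(f\times f)(L_i)$ and respond using $Y$'s winning strategy; when that strategy terminates in a uniformly bounded family $\sV_m$, pull the whole chain back through $f$ (Remark~\ref{remark4.13}) to obtain valid responses for turns $1,\dots,m$ ending at $f^{-1}(\sV_m)$, and from turn $m+1$ onward answer the still-adaptive challenges using the winning strategy of the family $f^{-1}(\sV_m)$, which has FCDC by hypothesis since $\sV_m$ is uniformly bounded. No compression into a single turn is needed or possible in the non-weak game.
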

\begin{proof} (1) The proof for statement (1) follows from the proof of statement (2) with the small change that in each step we are considering a weak decomposition (i.e. a $(L_i, d_i)$ decomposition). With this small change, the proof follows the exact same scheme and so we omit it.

(2) We must show that Player 2 (P2) has a winning strategy in the coarse decomposition game for $X$. To this end, suppose Player 1 (P1) asserts the entourage $L_1\in\sE$. Put $K_1=(f\times f)(L_1)$. Since $f$ is bornologous, $K_1\in\sF$. Put $\sV_0=\{Y\}$. Since $Y$ has finite coarse decomposition complexity by assumption, P2 has a winning strategy in the coarse decomposition game. Thus, P2 responds with a family $\sV_1$ of subsets of $Y$ and a $K_1$ decomposition of $Y$ over $\sV_1$. Then, P1 asserts another entourage and the game continues. Since P2 has a winning strategy, at some finite stage the game ends with P2 returning a uniformly bounded family (say) $\sV_m$ and a $K_m$ decomposition of $\sV_{m-1}$ over $\sV_m$, where $K_m$ is an entourage asserted by P1.

Now since $\sV_m$ is uniformly bounded, the collection of subsets $f^{-1}(\sV_m)$ of $X$ has finite coarse decomposition complexity by assumption. Thus, P2 has a winning strategy in the coarse decomposition game for $f^{-1}(\sV_m)$. Thus, no matter what choice of entourage $L_{m+1}$ P1 makes, P2 can respond with a family of subsets $\sU_{m+1}$ of $X$ and a $L_{m+1}$-decomposition of $f^{-1}(\sV_m)$ over $\sU_{m+1}$. This continues until at some finite stage, P2 responds with a uniformly bounded family (say) $\sU_{m+n}$ and a $L_{m+n}$ decomposition of the previous family over this one. 

By Remark \ref{remark4.13}, the families $\sU_i=\{f^{-1}(V)\mid V\in \sV_i\}$ for $i=1,2,\ldots, m$, provide a sequence of coarse $L_i$-decompositions for $X$ that end in the family $\sU_m=f^{-1}(\sV_m)$. Thus, by concatenating this strategy with the strategy for $f^{-1}(\sV_m)$,  (the sets of the form $\sU_{m+j}$, with $j>0$) we have shown how P2 can win the coarse decomposition game for $X$. Thus, $X$ has finite coarse decomposition complexity.

(3) The proof of statement (2) can be modified to prove the statement in the case of straight finite decomposition complexity. Indeed, the only change is that P1 gives the entire sequence of entourages $L_1\subset L_2\subset \cdots$ at the onset of the game. Thus, P2 responds to the sequence by finding a straight finite decomposition sequence of $\{Y\}$ over the entourages $(f\times f)(L_i)$ that ends in some uniformly bounded family $\sV$ of subsets of $Y$. This yields a strategy for $X$ that can be concatenated with the straight decomposition over the inverse image $f^{-1}(\sV)$ as in part (2).
\end{proof}

\begin{remark} \label{bornologous} We would like to note that we did not require the map in the previous proposition to be coarse. It is enough to assume it is bornologous.\end{remark}

To show that straight finite coarse decomposition is preserved by products, we require a basic result on products with bounded sets. We use the standard bounded coarse structure, defined by Grave, \cite{Gra05}: if $(X_i,\sE_i)$ is a finite collection of coarse spaces, then the product coarse structure is given by $\{E\subset (X_1\times \cdots \times X_k)^2\mid (p_i\times p_i)(E)\in \sE_i\ \forall\ i\}$, where $p_i$ denotes the projection to the $i$-th factor.

\begin{proposition}\label{bounded}
Let $(X,\sE)$ and $(Y,\sF)$ be coarse spaces. Let $B$ be a bounded subset of $X$. Give $B$ the subspace coarse structure it inherits from $X$ and give $B\times Y$ the product coarse structure. Then $B\times Y$ is coarsely equivalent to $Y$. 
\end{proposition}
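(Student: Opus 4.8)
The plan is to exhibit an explicit coarse equivalence between $Y$ and $B \times Y$ and then invoke the fact that coarse equivalences preserve all the properties in sight (this particular proposition is only used as a lemma, but the natural thing is to prove the clean statement). First I would fix a basepoint $b_0 \in B$ (if $B = \emptyset$ the statement is vacuous or trivial, so assume $B$ nonempty) and define $\iota \colon Y \to B \times Y$ by $\iota(y) = (b_0, y)$ and $\pi = p_2 \colon B \times Y \to Y$ the projection. Then $\pi \circ \iota = \mathrm{id}_Y$ exactly, so there is nothing to check on that side; the work is to show $\iota \circ \pi$ is close to $\mathrm{id}_{B\times Y}$ and that both maps are coarse.

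Next I would verify the three ingredients. \emph{Closeness:} the set $\{((b,y),(b_0,y)) \mid (b,y)\in B\times Y\}$ must be an entourage of $B\times Y$; applying $p_1\times p_1$ gives a subset of $B\times B$, which is an entourage of $B$ since $B$ is bounded, and applying $p_2 \times p_2$ gives $\Delta_Y$, an entourage of $Y$ — so by the definition of the product coarse structure this set is an entourage. \emph{$\iota$ is coarse:} bornologous because $(\iota\times\iota)(F)$ has $p_1\times p_1$-image $\subseteq B\times B \in \sE|_B$ and $p_2\times p_2$-image $= F \in \sF$; proper because the preimage of a bounded set $D\subseteq B\times Y$ is $\{y \mid (b_0,y)\in D\}$, which is bounded in $Y$ since $p_2\times p_2$ sends the entourage $D\times D$ to an entourage of $Y$. \emph{$\pi$ is coarse:} bornologous directly from the definition of the product coarse structure (that is exactly the condition $(p_2\times p_2)(E)\in\sF$); proper because for $D\subseteq Y$ bounded, $\pi^{-1}(D) = B\times D$ and $(B\times D)\times(B\times D)$ has $p_1$-image in $B\times B$ and $p_2$-image in $D\times D$, both entourages.

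Assembling these, $\iota$ and $\pi$ are mutually inverse coarse equivalences, so $B\times Y$ is coarsely equivalent to $Y$. I do not anticipate a serious obstacle here: the only mild subtlety is keeping the product coarse structure's defining condition straight — everything reduces to the two coordinatewise checks, and the boundedness of $B$ is used precisely once, to see that the $B$-coordinate of the relevant entourages is always contained in $B\times B$. The proof is essentially a bookkeeping exercise in the axioms; the substantive content is just the observation that a bounded factor contributes nothing on the large scale.
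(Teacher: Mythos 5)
Your proof is correct and follows essentially the same route as the paper: the same two maps $y\mapsto(b_0,y)$ and the projection $(b,y)\mapsto y$, the same coordinatewise checks against the product coarse structure, and the same verification that the compositions are the identity on $Y$ and close to the identity on $B\times Y$. (Like the paper, you implicitly need $B\neq\emptyset$ to pick $b_0$; your parenthetical dismissal of the empty case is harmless since the paper makes the same tacit assumption.)
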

\begin{proof}
Let $b_0$ be any element of $B$. Define $g:Y\to B\times Y$ by $g(y)=(b_0,y)$. It is clear that $f$ is a coarsely uniform embedding. Let $f:B\times Y \rightarrow Y: (b,y) \mapsto y$.  We will show this is a coarsely uniform embedding.  

Since $f(B\times Y)$ is the same map as the projection to the second factor $p_2$ in the definition of the product coarse structure, we see that $f$ is bornologous. If $C\subset Y$ is bounded, then clearly $f^{-1}(C)=B\times C$ is bounded in the product structure and so $f$ is proper. Let $K$ be an element of $\sF$. Since $B$ is bounded, $B\times B\in\sE$. Thus $(p_1\times p_1)(f^{-1}(K))=B\times B$, which is an entourage (in $\sE$), while $(p_2\times p_2)(f^{-1}(K))=K$, which is an element of $\sF$. Thus, $f^{-1}(K)$ is an entourage in the product coarse structure on $X\times Y$. Therefore, $f$ is a coarsely uniform embedding. 

To finish the proof we must show that the compositions in both directions are close to the identity. It is clear that $f\circ g:Y\to Y$ is precisely the identity. On the other hand, $g\circ f: B\times Y\to B\times Y$ is close to the identity, since $\{(g\circ f)(b\times y),(b\times y)\mid b\times y\in B\times Y\}=\{(b_0\times y),(b\times y)\mid b\times y\in B\times Y\}$. This is in the product coarse structure since $(p_1\times p_1)(\{(b_0\times y),(b\times y)\mid b\times y\in B\times Y\})=b_0\times b\in B\times B$ and 
$(p_2\times p_2)(\{(b_0\times y),(b\times y)\mid b\times y\in B\times Y\})=y\times y$ which is the diagonal and so is an entourage.
\end{proof}

Now we are in a position to prove that these properties are preserved by direct products.

\begin{theorem}\label{cFDCdirect}
Let $(X,\sE)$ and $(Y, \sF)$ be coarse spaces. Let $Z=X\times Y$ with the product coarse structure. \begin{enumerate}
\item If $X$ and $Y$ have finite weak coarse decomposition complexity then so does $Z$.
\item If $X$ and $Y$ have finite coarse decomposition complexity then so does $Z$.
\item If $X$ and $Y$ have straight finite coarse decomposition complexity then so does $Z$.
\end{enumerate}
\end{theorem}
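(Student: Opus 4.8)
The plan is to obtain all three parts simultaneously by applying the fibering theorem (Theorem~\ref{fibering}) to the coordinate projection $p_X\colon Z=X\times Y\to X$. By Remark~\ref{bornologous} the fibering theorem only needs the map to be bornologous, and $p_X$ is bornologous directly from the definition of the product coarse structure: if $E$ is an entourage of $Z$ then $(p_X\times p_X)(E)\in\sE$. (Note $p_X$ need not be proper, so this observation is essential.) Since the target $X$ has the relevant decomposition property by hypothesis, Theorem~\ref{fibering} reduces each of (1), (2), (3) to the following claim: for every uniformly bounded family $\sV$ of subsets of $X$, the family $p_X^{-1}(\sV)=\{V\times Y\mid V\in\sV\}$ has finite weak coarse decomposition complexity, respectively finite coarse decomposition complexity, respectively straight finite coarse decomposition complexity.

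To prove the claim I would run the decomposition game for $p_X^{-1}(\sV)$ in parallel with the game for $Y$, transporting challenges and responses through the \emph{other} projection $p_Y\colon Z\to Y$. When Player~1 asserts an entourage $L$ of $Z$, pass to $M=(p_Y\times p_Y)(L)\in\sF$ and invoke Player~2's winning strategy for $Y$ to obtain a family $\sW$ of subsets of $Y$ and a (weak) decomposition $Y=Y^1\cup\dots\cup Y^d$ with each $Y^a=\bigsqcup_M W^a_j$, $W^a_j\in\sW$. For each $V\in\sV$ this yields $V\times Y=\bigcup_a(V\times Y^a)$ with $V\times Y^a=\bigsqcup_j(V\times W^a_j)$, and the new family is $\{V\times W\mid V\in\sV,\ W\in\sW\}$. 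The one point to verify is $L$-disjointness: if a point of $\bigl((V\times W^a_j)\times(V\times W^a_{j'})\bigr)\cap L$ existed with $W^a_j\neq W^a_{j'}$, its $p_Y\times p_Y$-image would lie in $W^a_j\times W^a_{j'}\cap M$, contradicting $M$-disjointness of $\sW$ at that stage. Iterating, when Player~2's strategy for $Y$ terminates with a uniformly bounded family $\sW_k$, the family $\{V\times W\mid V\in\sV,\ W\in\sW_k\}$ is uniformly bounded in $Z$: its associated diagonal projects under $p_X\times p_X$ onto $\Delta_\sV\in\sE$ (because $\sV$ is uniformly bounded in $X$) and under $p_Y\times p_Y$ onto $\Delta_{\sW_k}\in\sF$ (because $\sW_k$ is uniformly bounded in $Y$), hence lies in the product coarse structure. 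For part~(3) one simply passes the whole increasing sequence $L_1\subset L_2\subset\cdots$ to the increasing sequence $M_i=(p_Y\times p_Y)(L_i)$ and feeds it to the straight strategy for $Y$; parts~(1) and~(2) are the game-theoretic versions of the same bookkeeping, with part~(2) being part~(1) with all multiplicities equal to $2$.

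Conceptually this is the family-wise upgrade of Proposition~\ref{bounded}: each fibre $V\times Y$ with $V$ bounded is coarsely equivalent to $Y$, and (because $\sV$ is uniformly bounded) these equivalences can be taken with uniform control, so the whole family $p_X^{-1}(\sV)$ inherits the property from $Y$ just as Theorem~\ref{FDC-CI} would give for a single fibre. Accordingly, the main (really the only) obstacle is this uniformity: one must commit to a single decomposition of $Y$ at each stage and reuse it across all $V\in\sV$ at once, and then confirm that uniform boundedness of the terminal family survives in the product coarse structure. Once that is checked, Theorem~\ref{fibering} closes all three cases.
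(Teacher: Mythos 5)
Your proposal is correct and follows the same overall route as the paper: both arguments apply Theorem~\ref{fibering} together with Remark~\ref{bornologous} to the bornologous projection $p_X\colon X\times Y\to X$, reducing all three statements to the claim that $p_X^{-1}(\sV)=\{V\times Y\mid V\in\sV\}$ has the relevant decomposition property for every uniformly bounded family $\sV$ of subsets of $X$. Where you differ is in how that claim is verified. The paper disposes of it in one line by citing Proposition~\ref{bounded} (each $V\times Y$ with $V$ bounded is coarsely equivalent to $Y$) together with coarse invariance (Theorem~\ref{FDC-CI}), which strictly speaking only records that each individual member of $p_X^{-1}(\sV)$ has the property; you instead run a single game for the whole family, transporting each challenge $L$ through the other projection to $M=(p_Y\times p_Y)(L)$, reusing one decomposition of $Y$ across all $V\in\sV$ at once, and checking that the terminal family $\{V\times W\mid V\in\sV,\ W\in\sW_k\}$ is uniformly bounded because its diagonal has both coordinate projections controlled. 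This buys you exactly the family-level uniformity that the hypothesis of Theorem~\ref{fibering} is really about and that the paper's citation leaves implicit (the uniform boundedness of $\sV$ is what makes the fibrewise equivalences uniform), so your write-up is the more careful one; the paper's is shorter because it delegates the single-fibre case to Proposition~\ref{bounded}. Both arguments are sound.
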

\begin{proof}
Let $f:Z\to X$ be the projection map. We note that $f$ is bornologous. If $\sV$ is a collection of subsets of $X$ that is uniformly bounded, then (by the definition) $\{V\times V\mid V\in \sV\}\in\sE$. Thus, if $A\in f^{-1}(\sV)$, then $A=V\times Y$ for some bounded $V\in\sV$. Thus, by Proposition \ref{bounded}, we see that each $A\in f^{-1}(\sV)$ has the same version of coarse decomposition complexity as $Y$. We apply Theorem \ref{fibering} and Remark \ref{bornologous} to complete the proof. 
\end{proof}

As was the case for property C, these notions of finite decomposition complexity are preserved by certain types of unions. 
We consider the case where $X$ can be expressed as a union of a collection of spaces with the property that for each entourage $L$ there is a ``core'' space $Y_L$ such that removing this core from the families leaves the families $L$-disjoint. We will be following the scheme we used to prove the corresponding results for asymptotic dimension and property C, adapting it for use in the coarse case. In this situation, however, we do not require a separate uniform version of the property.

\begin{theorem}\label{cFDCUnion}
Let $X = \cup \sX$. If for each entourage $L$, there exists $Y_L \subseteq X$ such that $\{X_\alpha\setminus Y_L\} = \sX_L$ forms an $L$-disjoint collection, then
\begin{enumerate}
\item  if $\sX$ has finite weak coarse decomposition complexity and $Y_L$ has finite weak coarse decomposition complexity $\forall L$, then $X$ has finite weak coarse decomposition complexity;
\item if $\sX$ has straight finite coarse decomposition complexity and $Y_L$ has finite coarse decomposition complexity $\forall L$, then $X$ has finite coarse decomposition complexity; and
\item  if $\sX$ has straight finite coarse decomposition complexity and $Y_L$ has straight finite coarse decomposition complexity $\forall L$, then $X$ has straight finite coarse decomposition complexity.
\end{enumerate}
\end{theorem}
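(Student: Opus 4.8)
The plan is to follow the same ``saturated union'' scheme used for Theorem~\ref{cPCUnion} and Theorem~\ref{cFADUnion}, but now threaded through the decomposition game rather than a single cover. Fix a version of coarse decomposition complexity (weak, straight, or plain) and suppose Player~1 initiates a game (or, in the straight case, hands over a sequence $L_1\subset L_2\subset\cdots$). At the first stage, given the entourage $L_1$, we exploit the \emph{uniform} decomposability assumption on $\sX$: there is a single bound on the length of the decompositions and a single family-level entourage, so we may decompose every $X_\alpha$ simultaneously, obtaining families $\sU^i_\alpha$ and amalgamating them into $\sU^i=\bigcup_\alpha\sU^i_\alpha$. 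As in the earlier proofs, each $\sU^i$ fails to be $L_1$-disjoint only because of interactions \emph{between} different $X_\alpha$'s; but after passing to the core $Y_K$ (for $K=L\cup(L\Delta_\sU L)\cup(L\Delta_\sU L\Delta_\sU L)$ with $L$ the current entourage) and restricting to $\overline{\sU^i}=\bigcup_\alpha \overline{\sU^i_\alpha}$ with $\overline{\sU^i_\alpha}=\sU^i_\alpha|_{X_\alpha\setminus Y_K}$, the $K$-disjointness of $\{X_\alpha\setminus Y_K\}$ upgrades the $L$-disjointness of each piece to genuine $L$-disjointness of $\overline{\sU^i}$. This handles the part of $X$ outside $Y_K$.

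Next I would run the game on $Y_K$ using its assumed complexity, but with the \emph{inflated} entourages $K_i=L_i\cup(L_i\Delta_\sU L_i)\cup(L_i\Delta_\sU L_i\Delta_\sU L_i)$ exactly as in Theorem~\ref{cPCUnion}; this produces families $\sV^i$ covering $Y_K$ that are $K_i$-disjoint and uniformly bounded. Then I form the saturated unions $\sW^i=\sV^i\cup_{L_i}\overline{\sU^i}$ and invoke Proposition~\ref{L-sat} to conclude that each $\sW^i$ is $L_i$-disjoint and uniformly bounded, and that $\sW=\bigcup_i\sW^i$ covers $X$. In the straight case this is essentially the whole argument: one reads off the sequence of decompositions from the sequence of covers (as in Proposition~\ref{cPCcsFDC}), using that straight finite coarse decomposition complexity of a family is equivalent to the corresponding covering statement; the saturated-union construction is compatible with the nested structure of the straight decomposition because $L_1\subset L_2\subset\cdots$ forces the inflated sequence $K_1\subset K_2\subset\cdots$ as well.

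For statements (1) and (2), where the game is interactive and Player~1's moves are not known in advance, the argument is a strategy-stealing construction. On each turn, given the newly revealed entourage, Player~2 uses the uniform decomposability of $\sX$ (weak case) or the straight finite complexity of $\sX$ with the constant sequence (case (2)) to decompose the current family of pieces, simultaneously removes the appropriate core $Y_K$, and saturates the result with the corresponding stage of Player~2's winning strategy for $Y_K$ (which has finite weak, resp.\ finite, coarse decomposition complexity). The key bookkeeping point—and the main obstacle—is that in case (2) one wants $\sX$ to have \emph{straight} finite complexity (so the whole decomposition sequence for the pieces is available at once, before the core's interactive game is played) while $Y_L$ has only \emph{finite} complexity; the two games must be interleaved so that the entourage Player~1 asserts at a given stage is used both to continue the $Y_L$-game and to index the fixed straight sequence for the pieces, and then Proposition~\ref{L-sat} must be applied stage by stage. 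Verifying that the lengths of the decompositions stay finite (the straight sequence for $\sX$ has a fixed finite length $n$, and Player~2's winning strategy for $Y_L$ terminates) and that the saturated unions at each stage remain uniformly bounded—using that the inflating entourages $K_i$ are themselves controlled by Proposition~\ref{L-sat}—is the routine but slightly delicate part; the rest is a direct transcription of the arguments for Theorem~\ref{cPCUnion} and Theorem~\ref{cFADUnion}.
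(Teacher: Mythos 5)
Your approach misses the structural simplification that makes this theorem easy, and the machinery you import from Theorems \ref{cPCUnion} and \ref{cFADUnion} does not actually transfer. The paper's proof uses no saturated unions at all: because an $L$-decomposition is allowed to consist of two pieces $X=X^0\cup X^1$ with no disjointness requirement inside $X^0$ when it is a single set, Player 2's first move is simply $X^0=Y_L$ and $X^1=\bigsqcup_{L}(X_\alpha\setminus Y_L)$, which is an $L$-decomposition of $X$ over the family $\sY=\{Y_L\}\cup\sX_L$ — the one-element part is trivially $L$-disjoint and $\sX_L$ is $L$-disjoint by the core hypothesis. After that single move one just concatenates: the sets $X_\alpha\setminus Y_L$ inherit the assumed complexity from $\sX$ (subspace permanence), $Y_L$ has it by hypothesis, so Player 2 continues her winning strategies on the members of $\sY$ (in the straight case, she continues the chain with the remaining entourages $L_2\subset L_3\subset\cdots$). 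In other words, the ``core'' hypothesis is tailor-made for the $X^0\cup X^1$ shape of a decomposition, and Proposition \ref{L-sat} is never needed.

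By contrast, your plan of saturating stage by stage has a genuine gap: Proposition \ref{L-sat} and the inflated entourages $K=L\cup(L\circ\Delta_\sU\circ L)\cup(L\circ\Delta_\sU\circ L\circ\Delta_\sU\circ L)$ only make sense when $\Delta_\sU$ is an entourage, i.e.\ when the families involved are uniformly bounded. In a decomposition game the families produced at intermediate stages are \emph{not} uniformly bounded (boundedness is achieved only at the final stage), so the entourage $K$ you want to feed to the core's game, and the saturated unions $\sW^i=\sV^i\cup_{L_i}\overline{\sU^i}$, are not defined at those stages; the ``routine but slightly delicate'' bookkeeping you defer is exactly where the argument fails. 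Two further problems: the theorem has no uniformity hypothesis analogous to uniform coarse property C (none is needed, since decomposition complexity of the family $\sX$ already plays that role), and your appeal to the claim that straight finite coarse decomposition complexity ``is equivalent to the corresponding covering statement'' is unsupported — the paper proves only that coarse property C implies straight finite coarse decomposition complexity (Proposition \ref{cPCcsFDC}), not a converse, so you cannot convert a straight decomposition chain into covers by uniformly bounded, $L_i$-disjoint families and back.
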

\begin{proof} 
(1) Again, the only difference between (1) and (2) is the number of families into which the decompositions split. Thus, we omit the proof in this case.

(2) Suppose we are given an entourage $L$.  We consider the family $\sY = \{Y_{L}\} \cup \sX_{L}$ and write $X = X_0 \cup X_1$, where $X_0 = Y_{L}$ and $X_1 = \cup\sX_{L}$.  Since $X_0$ is a single element of the family, it is $L$-disjoint and we have that $\sX_{L}$ is $L$-disjoint by assumption, so therefore we have an $L$-decomposition of $X$ over $\sY$.

Now, $\sX$ has finite coarse decomposition complexity and thus $\sX_{L}$ has finite coarse decomposition complexity.  Therefore, since $\sY_{L}$ also has finite coarse decomposition complexity, Player 2 has a winning strategy for each element of $\sY$ and therefore, $X$ has finite coarse decomposition complexity.

(3) We use a different strategy here and follow the techniques in \cite{DZ}, Theorem 3.6.  Given $L_1\subseteq L_2 \subseteq \cdots$ a sequence of entourages, then we consider the family $\sY_1 = \{Y_{L_1}\} \cup \sX_{L_1}$ and write $X = X_0 \cup X_1$, where $X_0 = Y_{L_1}$ and $X_1 = \cup\sX_{L_1}$.  Since $X_0$ is a single element of the family, it is $L_1$-disjoint  and we have that $\sX_{L_1}$ is $L_1$-disjoint by assumption, so therefore we have an $L_1$-decomposition of $X$ over $\sY_1$.

Now, $\sX$ has straight finite coarse decomposition complexity and thus $\sX_{L_1}$ has straight finite coarse decomposition complexity.  Therefore, since $\sY_{L_1}$ also has straight finite coarse decomposition complexity, we have a natural number $n$ and families $\sY_i, i=2,3,\cdots,n$ such that $\sY_{i-1}\xrightarrow{L_i}\sY_i$ for $i=2,\cdots,n$ and $\sY_n$ is a bounded family.  Therefore, $X$ has straight finite coarse decomposition complexity.

\end{proof}

\begin{corollary} Let $X=X_1\cup X_2$ be a coarse space. 
\begin{enumerate}
\item If $X_1$ and $X_2$ have finite weak coarse decomposition complexity then $X$ does.
\item If $X_1$ and $X_2$ have finite coarse decomposition complexity then $X$ does.
\item If $X_1$ and $X_2$ have straight finite coarse decomposition complexity then $X$ does.
\end{enumerate} 
\end{corollary}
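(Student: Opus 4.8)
The plan is to deduce the corollary directly from Theorem \ref{cFDCUnion}, applied to the trivial decomposition $\sX = \{X_1, X_2\}$ of $X = X_1 \cup X_2$, with the constant choice of core $Y_L = X_1$ for every entourage $L$.

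First I would check the $L$-disjointness hypothesis of Theorem \ref{cFDCUnion}. With $Y_L = X_1$, the family $\sX_L = \{X_1 \setminus Y_L,\ X_2 \setminus Y_L\} = \{\emptyset,\ X_2 \setminus X_1\}$ contains at most one nonempty set, hence is vacuously $L$-disjoint for every $L$. Moreover $Y_L = X_1$ has, by hypothesis, whichever version of coarse decomposition complexity is under consideration, so the remaining hypothesis of Theorem \ref{cFDCUnion} about $Y_L$ is automatic.

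Second I would verify that the two-element family $\sX = \{X_1, X_2\}$ inherits the relevant property from its members. For straight finite coarse decomposition complexity: given $L_1 \subseteq L_2 \subseteq \cdots$, apply the definition to $X_1$ and to $X_2$ separately to get sequences $\sY^1_1, \ldots, \sY^1_{n_1}$ and $\sY^2_1, \ldots, \sY^2_{n_2}$; pad the shorter sequence by repeating its final uniformly bounded family (a uniformly bounded family straight-decomposes over itself, writing $U = U \cup \emptyset$), so both have a common length $n$; then $\sY_i := \sY^1_i \cup \sY^2_i$ witnesses $\sX \xrightarrow{L_1} \sY_1 \xrightarrow{L_2} \cdots \xrightarrow{L_n} \sY_n$ with $\sY_n$ uniformly bounded, since each member of $\sY_{i-1}$ lies in $\sY^1_{i-1}$ or $\sY^2_{i-1}$ and so decomposes over $\sY^1_i \subseteq \sY_i$ or $\sY^2_i \subseteq \sY_i$, and a finite union of uniformly bounded families is uniformly bounded. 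The same ``take the maximum and pad'' bookkeeping handles the (weak) coarse decomposition games: Player 2 combines her two winning strategies by responding at each stage with the union of the two families she would otherwise return, stalling on a finished factor by decomposing each of its sets over itself.

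Finally I would assemble the three cases. For (1), both $X_i$ have finite weak coarse decomposition complexity, hence so do $\sX$ and $Y_L = X_1$, and Theorem \ref{cFDCUnion}(1) gives the conclusion. For (2), both $X_i$ have finite coarse decomposition complexity, hence straight finite coarse decomposition complexity by Proposition \ref{cFDCcsFDC}, so $\sX$ has straight finite coarse decomposition complexity while $Y_L = X_1$ has finite coarse decomposition complexity, and Theorem \ref{cFDCUnion}(2) applies. For (3), both $X_i$, and hence $\sX$ and $Y_L = X_1$, have straight finite coarse decomposition complexity, and Theorem \ref{cFDCUnion}(3) finishes the argument. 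The only step requiring any thought is the finite-family inheritance in the previous paragraph, and even that is a routine padding argument; everything else is immediate from Theorem \ref{cFDCUnion}.
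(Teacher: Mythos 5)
Your proposal is correct and follows essentially the same route as the paper, which likewise deduces the corollary from Theorem \ref{cFDCUnion} by taking a constant core (the paper chooses $Y_L=X_2$; your choice $Y_L=X_1$ is symmetric and immaterial). The only difference is that you explicitly verify, via the padding/combined-strategy argument, that the two-element family $\{X_1,X_2\}$ inherits the relevant decomposition property from its members, a step the paper leaves implicit.
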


\begin{proof} For any entourage $L$, take $Y_L=X_2$. 
\end{proof}

If one tries to play the decomposition game with coarse property C, then one obtains a coarse version of Dranishnikov and Zarichnyi's game-theoretic property C, \cite{DZ}.

\begin{definition} The coarse space $(X,\sE)$ has {\em game-theoretic coarse property C} if there is a winning strategy for Player 2 in the following game. Player 1 selects an entourage $L_1$ and Player 2 finds a uniformly bounded family $\sU_1$ of sets that are $L_1$-disjoint. Then, Player 1 gives an entourage $L_2$ and Player 2 responds with an $L_2$-disjoint, uniformly bounded family $\sU_2$. The game ends and Player 2 wins if there is some $k$ for which $\sU=\cup_{i=1}^k\sU_k$ covers $X$.
\end{definition}

As in the case with Dranishnikov and Zarichnyi's metric version, the attempt to define game-theoretic coarse property C gives rise to precisely the same class of spaces with coarse asymptotic dimension 0.

\begin{proposition} A coarse space $(X,\sE)$ has game-theoretic coarse property C if and only if $\as_C(X,\sE)=0$.
\end{proposition}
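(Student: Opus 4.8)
The plan is to prove the two implications separately; the ``if'' direction is essentially immediate and the ``only if'' direction is where the work lies.

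For the ``if'' direction, assume $\as_C X = 0$. When Player~1 opens with an entourage $L_1$, Player~2 enlarges $L_1$ to the symmetric entourage $L_1 \cup L_1^{-1} \cup \Delta_X$ and applies the definition of $\as_C X \le 0$ to it, obtaining a single uniformly bounded family $\sU_1$ that is disjoint with respect to this enlarged entourage --- hence $L_1$-disjoint --- and that \emph{covers} $X$. Then $\bigcup_{i=1}^{1}\sU_i = \sU_1$ already covers $X$, so Player~2 wins on the first turn; this is a winning strategy, so $X$ has game-theoretic coarse property~C.

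For the ``only if'' direction, let $\sigma$ be a winning strategy for Player~2 and fix a symmetric entourage $E$ with $\Delta_X \subseteq E$; it suffices to produce a single uniformly bounded $E$-disjoint cover of $X$, because every entourage $F$ is contained in a symmetric entourage $E \supseteq \Delta_X$ and a family that is $E$-disjoint is also $F$-disjoint. I run the game against $\sigma$ with an adaptive Player~1, maintaining after round $i$ a family $\sW_i$ that is $E$-disjoint, uniformly bounded, and covers $\bigcup_{j\le i}\bigcup\sU_j$, where $\sU_1,\sU_2,\dots$ denote Player~2's responses. Open with $L_1 = E$, so $\sU_1$ is $E$-disjoint and uniformly bounded, and set $\sW_1 := \sU_1$. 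Once $\sU_1,\dots,\sU_{i-1}$ have been revealed, $\sW_{i-1}$ is determined, and Player~1 plays
\[
  L_i \;=\; E \cup \bigl(E\circ\Delta_{\sW_{i-1}}\circ E\bigr) \cup \bigl(E\circ\Delta_{\sW_{i-1}}\circ E\circ\Delta_{\sW_{i-1}}\circ E\bigr),
\]
which is an entourage since $\sW_{i-1}$ is uniformly bounded. Player~2 answers with an $L_i$-disjoint, uniformly bounded family $\sU_i$, and we set $\sW_i := \sU_i \cup_E \sW_{i-1}$. Applying Proposition~\ref{L-sat} with $\sW_{i-1}$ in the role of the $L$-disjoint family, $\sU_i$ in the role of the $K$-disjoint family, and $E$ in the role of $L$ --- noting that $L_i$ is precisely the entourage denoted $K$ in that proposition, so $\sU_i$ is $K$-disjoint --- we conclude that $\sW_i$ is $E$-disjoint and uniformly bounded; and since the $E$-saturated union contains every member of $\sU_i$ together with every member of $\sW_{i-1}$, it covers $\bigcup\sU_i \cup \bigcup\sW_{i-1}$, hence $\bigcup_{j\le i}\bigcup\sU_j$.

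Since $\sigma$ is winning, this play terminates after finitely many rounds, say $k$, with $\bigcup_{i=1}^{k}\sU_i$ covering $X$; then $\sW_k$ is a uniformly bounded, $E$-disjoint family covering $X$. As $E$ was arbitrary, $\as_C X \le 0$, that is, $\as_C X = 0$. The one genuine point in the argument --- and the main obstacle to spot --- is that in the game Player~1 commits to $L_i$ only \emph{after} seeing $\sU_1,\dots,\sU_{i-1}$, and may therefore demand exactly the degree of disjointness that Proposition~\ref{L-sat} requires in order to absorb $\sU_i$ into the running $E$-disjoint family $\sW_{i-1}$; everything else is bookkeeping, together with the fact that a winning strategy wins against every line of play, this adaptive one included.
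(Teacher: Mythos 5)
Your proof is correct and follows essentially the same route as the paper: an adaptive Player~1 who, after each response, demands exactly the composed entourage needed for Proposition~\ref{L-sat}, with the responses then merged by saturated unions into a single uniformly bounded disjoint cover. The only (cosmetic) differences are that you accumulate the saturated union on the fly against a fixed target entourage $E$ and argue directly, whereas the paper collapses the families backwards at the end of the game and phrases the argument as a contradiction.
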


\begin{proof} If $\as_C(X,\sE)=0$, then it is clear that $X$ the game ends in one step regardless of what $L$ is asserted by Player 1.

On the other hand, suppose that $X$ has game-theoretic coarse property C and there is some entourage $L$ for which $X$ has no uniformly bounded $L$-disjoint cover. Then, Player 1 selects any entourage $L_1$ that properly contains $L$. Player 2 responds with a uniformly bounded family $\sU_1$ that is $L_1$-disjoint. Player 1 then responds with the entourage $L_2=L_1\Delta_{\sU_1}L_1\Delta_{\sU_1}L_1.$ Player 2 responds with $\sU_2$ and Player 1 gives $L_3=L_2\Delta_{\sU_2}L_2\Delta_{\sU_2}L_2.$ This continues until at some point Player 2 returns $\sU_k$ so that the family $\sU=\sU_1\cup\cdots\cup\sU_k$ covers $X$. 

We use the saturated union. 
Set $\sV_{k-1}=\sU_k\cup_{L_{k-1}}\sU_{k-1}$. Then, this family is uniformly bounded and $L_{k-1}$ disjoint by Proposition \ref{L-sat}. Also, it covers $\sU_k\cup\sU_{k-1}$. Next, put $\sV_{k-2}=\sV_{k-1}\cup_{L_{k-2}}\sU_{k-2}$ and so on. Finally, one obtains a single family $\sV_1$ that is uniformly bounded, $L_1$-disjoint, and covers $X$. This contradicts the choice of $L$.
\end{proof}

\subsection{Open Questions}
We conclude with a few open questions. 
\begin{question}
Does straight finite coarse decomposition complexity imply coarse property A?
\end{question}

As we mentioned above, there is a second line of inquiry regarding the relationship between finite coarse asymptotic dimension and finite coarse decomposition complexity.

\begin{question}
If a coarse space $X$ has finite coarse asymptotic dimension and an anti-\v{C}ech approximation, does it have finite coarse decomposition complexity?
\end{question}

Alternately, we could use the approach of the proof in the metric case.

\begin{question}
If $X$ is a coarse space with finite asymptotic dimension, does it embed into a product of trees?
\end{question}

\section*{Acknowledgements} The authors are sincerely grateful to the anonymous referee for many helpful suggestions, a very close and careful reading of the manuscript, and for pointing our several helpful references (including references to countable asymptotic dimension). The first author would like to thank Piotr Nowak for very helpful remarks concerning our version of property A.

This work began when the first author visited Warsaw University and a stayed at IMPAN. Additionally much work was completed while the first author was a New Directions Visiting Professor the Institute for Mathematics and its Applications. Bell would like to thank Warsaw University, IMPAN, and the IMA for their support.

\bibliographystyle{alpha_initials_nodash}
\bibliography{database.bib}

\end{document}